\tikzset{>=latex}
\pgfplotsset{compat=newest}
\numberwithin{equation}{section}
\newtheorem{theorem}{Theorem}[section]
\newtheorem{proposition}[theorem]{Proposition}
\newtheorem{lemma}[theorem]{Lemma}
\newtheorem{remark}[theorem]{Remark}
\newtheorem{question}[theorem]{Question}
\newtheorem{conjecture}[theorem]{Conjecture}
\newtheorem*{definition*}{Definition}
\newtheorem{mainthm}{Theorem}
\newcommand{\cali}[1]{\mathscr{#1}}
\newcommand{\dist}{\mathop{\mathrm{dist}}\nolimits}
\newcommand{\dd}{{\rm d}}
\newcommand{\ep}{\epsilon}
\newcommand{\alg}{{\rm alg}}
\newcommand{\dif}{{\rm diff}}
\newcommand{\Area}{{\rm Area}}
\newcommand{\Length}{{\rm Length}}
\newcommand{\Ac}{\cali{A}}
\newcommand{\Bc}{\cali{B}}
\newcommand{\Cc}{\cali{C}}
\newcommand{\T}{\mathbb{T}}
\newcommand{\FS}{{\rm FS}}
\newcommand{\D}{\mathbb{D}}
\newcommand{\C}{\mathbb{C}}
\newcommand{\Z}{\mathbb{Z}}
\newcommand{\R}{\mathbb{R}}
\renewcommand\P{\mathbb{P}}
\newcommand{\lp}{\langle}
\newcommand{\rp}{\rangle}
\newcommand{\norm}[1]{\lVert#1\rVert}
\newcommand{\oA}{\mathcal{A}}
\newcommand{\oB}{\mathcal{B}}
\newcommand{\oN}{\mathcal{N}}
\newcommand{\oC}{\mathcal{C}}
\newcommand{\oT}{\mathcal{T}}
\newcommand{\bA}{\mathbf{A}}
\newcommand{\bB}{\mathbf{B}}
\newcommand{\tx}{\widehat{x}}
\newcommand{\ty}{\widehat{y}}
\newcommand{\tg}{\widehat{g}}
\newcommand{\wth}{\widehat{h}}
\newcommand{\fL}{\mathfrak{L}}
\newcommand{\fA}{\mathfrak{A}}
\newcommand{\fm}{\mathfrak{m}}
\title{Entire curves 
generating all shapes of Nevanlinna currents}
\author{Hao Wu}
\address{Department of Mathematics,  National University of Singapore - 10, Lower Kent Ridge Road - Singapore 119076}
\email{matwu@nus.edu.sg}
\author{Song-Yan Xie}
\address{Academy of Mathematics and System Science \& Hua Loo-Keng Key Laboratory of Mathematics, Chinese Academy of Sciences, Beijing 100190, China;
	School of Mathematical Sciences, University of Chinese Academy of Sciences, Beijing 100049, China}
\email{xiesongyan@amss.ac.cn}
\begin{document}

\date{\today}

\begin{abstract}
First, we show that every complex torus $\mathbb{T}$ contains some entire curve $g: \mathbb{C}\rightarrow \mathbb{T}$  such that the concentric holomorphic discs 
$\{g\,\lvert_{\overline\D_{r}}\}_{r>0}$ 
can generate all the Nevanlinna/Ahlfors currents on $\mathbb T$ at cohomological level. This confirms an anticipation of Sibony.  
Developing further our new method, 
we can  construct  some
twisted entire curve $f: \mathbb{C}\rightarrow \mathbb{CP}^1\times E$ in the product of the rational curve $\mathbb{CP}^1$ and an elliptic curve $E$, such that, concerning Siu's decomposition, 
    demanding any cardinality $|J|\in \mathbb{Z}_{\geqslant 0}\cup \{\infty\}$ and  that $\mathcal{T}_{\dif}$ is trivial ($|J|\geqslant 1$)  or not ($|J|\geqslant 0$), we can always find a sequence of concentric holomorphic discs $\{f\,\lvert_{\overline\D_{r_j}}\}_{j
    \geqslant 1}$ to generate a Nevanlinna/Ahlfors current  $\mathcal{T}=\mathcal{T}_{\alg}+\mathcal{T}_{\dif}$ with the singular part $\mathcal{T}_{\alg}=\sum_{j\in J} \,\lambda_j\cdot[\mathsf C_j]$ in the desired shape.
    This  fulfills the missing case where $|J|=0$ in the previous work of Huynh--Xie. 
    By a result of Duval, each $\mathsf C_j$ must be rational or elliptic. We will show that there is no \textsl{a priori} restriction  on the numbers of rational and elliptic  components in the support of $\mathcal{T}_{\alg}$, thus answering a question of Yau and  Zhou. Moreover, we will show that  the positive coefficients $\{\lambda_j\}_{j\in J}$ can be arbitrary as long as the total mass of $\oT_{\alg}$ is less than or equal to $1$. 
    Our results foreshadow  striking holomorphic flexibility of entire curves in Oka geometry, which deserves further exploration.
\end{abstract}

\clearpage\maketitle
\thispagestyle{empty}

\noindent\textbf{Mathematics Subject Classification 2020:} 32A22, 32C30, 32Q56.

\smallskip

\noindent\textbf{Keywords:}  Entire curves, Nevanlinna currents, Ahlfors currents,  Siu’s decomposition.

%\setcounter{tocdepth}{1}
%\tableofcontents

\section{\bf Introduction}\label{intro-sec}

One central problem in complex geometry is the famous Green-Griffiths conjecture~\cite{MR0609557}, which
 stipulates that for every complex projective
variety $X$ of general type, all entire curves $f: \mathbb{C}\rightarrow X$ shall be factored through certain  proper algebraic subvariety $Y\subsetneq X$. 
Some interest also comes from the philosophical analogy  
 between the value distribution of entire curves in Nevanlinna theory and the  locations of rational points in  Diophantine geometry
(cf.~\cite{MR0828820, Vojta, Xie-Yuan}), best illustrated as ``geometry governs arithmetic''.

In his celebrated solution~\cite{Mcquillan1998} to the Green--Griffiths conjecture for complex projective surfaces satisfying a Chern number inequality $c_1^2>c_2$, McQuillan introduced the so called {\em Nevanlinna currents} to capture asymptotic behaviors of entire curves. 
Later, simplified analogues  called {\em Ahlfors currents} were employed by Duval~\cite{Duval2008} to obtain a deep, quantitative Brody Lemma~\cite{MR0470252}. One important corollary is a characterization of complex hyperbolicity in terms of linear isoperimetric
inequality for holomorphic discs (see also~\cite{Kleiner-preprint}), which was anticipated by Gromov~\cite[p.~152 (c)]{MR1826251}. 
In value distribution theory and complex dynamics, 
Nevanlinna/Ahlfors currents also found important applications,  
see for instance~\cite{Dinh-Sibony2018,  Duval-Huynh2018, Huynh-Vu2020}. 

\smallskip

Now, let us recall the precise definitions.  Given  a compact complex manifold  $X$ with a Hermitian form $\omega$. 
Let $\{F_j: \overline\D (z_j, R_j)\to X\}_{j\geqslant 1}$ be a sequence of nonconstant holomorphic discs,  smooth up to the boundary,
where each $\overline{\mathbb{D}}(z_j, R_j)\subset \mathbb{C}$ is the closed disc centered at $z_j$ with radius $R_j$. 
We can associate  each $F_j$  with a  positive current $\widehat\oN_{F_j}$ of bidimension $(1,1)$,  
which evaluates every smooth $(1,1)$-form $\phi$ on $X$ by a Jensen-type formula
\[
\lp \widehat\oN_{F_j}, \phi\rp\,
=
\,
\int_{0}^{R_j}\,\frac{\dd t}{t}\int_{\mathbb{D}(z_j, t)}\,
F_j^*\phi.
\]
Write $\fA_{F_j}(z_j,R_j):= \lp \widehat\oN_{F_j}, \omega\rp$.
 Consider the sequence of normalized currents
 \begin{equation*}
\label{Nevanlinna currents}
\Big\{
	\oN_{F_j}:=\frac{1}{\fA_{F_j}(z_j,R_j)}\cdot \widehat\oN_{F_j}
\Big\}_{j\geqslant 1}
\end{equation*} 
of  mass $1$. 
By Banach–Alaoglu’s theorem, certain subsequence 
converges in weak topology to some positive current
$\oN$. 
If  a priori $\{F_j\}_{j\geqslant 1}$ satisfies
the {\em length-area condition}
\begin{equation}
	\label{length-area condition}
	\lim_{j\rightarrow \infty}	
	\dfrac{\fL_{F_j}(z_j,R_j)}{\fA_{F_j}(z_j,R_j)}
	=
	0,  \quad \text{where}\quad \fL_{F_j}(z_j,R_j)
:=
\int_0^{R_j}\,
\Length_{\omega}\,\big(F_j(\partial \mathbb{D}(z_j, t))\big)\,
\frac{\dd t}{t},
\end{equation}
then one can check that $\oN$ is in fact closed (cf.~\cite{MR1989205}), and it is called a Nevanlinna current.  When $z=0$, we abbreviate $\fL_{F}(z,r),\fA_{F}(z,r)$  as $\fL_{F}(r),\fA_{F}(r)$.

In particular, given an entire curve $f: \mathbb{C}\rightarrow X$,  Ahlfors Lemma~(cf.~\cite[p.~55]{Huynh2016}) ensures that we 
can always select some increasing radii $\{r_j\}_{j\geqslant 1}$ tending to infinity  such that $\{f\,\lvert_{\overline \D _{r_j}}\}_{j\geqslant 1}$
enjoys~\eqref{length-area condition}, where $\overline{\mathbb{D}}_{r_j}:=\{z\in \mathbb{C}: |z|\leqslant r_j\}$. Thus we receive some Nevanlinna current associated with  $f$. 

\smallskip

The definition of Ahlfors currents
is likewise and simpler. Again we start with
 a sequence
 $\{F_j: D_j\to X\}_{j\geqslant 1}$ 
 of nonconstant holomorphic discs, smooth up to the
boundary, satisfying another  length-area condition that
\begin{equation}
	\label{length-area-condition-Ahlfors}
\lim_{j\rightarrow \infty}\,	\dfrac{\Length_{\omega}(F_j(\partial D_j))}{\Area _{\omega}(F_j(D_j))}
=
 0.
\end{equation}
From the sequence of normalized currents
\begin{equation*}
\label{Ahlfors currents}
\Big\{
	\oA_j := \frac{1}{ \Area_{\omega} (F_j(D_j))} \cdot  (F_j)_*  [D_j] 
\Big\}_{j\geqslant 1}
\end{equation*} 
of mass $1$,
by compactness and diagonal argument, 
after passing to some subsequence, in the limit 
we receive some positive current of bidimension $(1, 1)$,   which is actually closed by~\eqref{length-area-condition-Ahlfors}. 

Specifically, given an entire curve $f: \mathbb{C}\rightarrow X$, by Ahlfors Lemma (cf.~\cite[p.~7]{duval-pan}), one
can find some increasing radii $\{r_j\}_{j\geqslant 1}$ tending to infinity such that $\{f\,\lvert_{\overline \D_{r_j}}\}_{j\geqslant 1}$
satisfies the length-area condition~\eqref{length-area-condition-Ahlfors}.
Thus $f$ produces some Ahlfors current. 

\smallskip

One advantage of
Nevanlinna currents 
over Ahlfors currents is that,
when $f: \mathbb{C}\rightarrow X$ is algebraically nondegenerate, 
its associated Nevanlinna currents
 are always nef   (cf.~\cite{MR1989205}).

Nevanlinna/Ahlfors curents associated with an entire curve $f: \mathbb{C}\rightarrow X$ can be regarded as certain noncompact Poincar\'e dualities of $f(\mathbb{C})$. 
Hence several classical results in value distribution theory can be reformulated in terms of intersections of corresponding cohomology classes. For instance, the First Main Theorem of Nevanlinna theory can be expressed as an inequality between the algebraic intersection and the geometric intersection (cf.~\cite{Duval-Huynh2018}).
Therefore, 
it would be natural and fundamental to ask

\begin{question}\label{uniqueness question}\rm
    Are all Nevanlinna/Ahlfors currents associated with the same entire curve cohomologically equivalent?
\end{question}

Meanwhile, in a simplied proof~\cite{brunella1999} of McQuillan's result~\cite{Mcquillan1998}, a key trick of Brunella is to use Siu's decomposition theorem~\cite{Siu1974} to  write every positive closed Nevanlinna current
$\mathcal{N}$ as $\mathcal{N}_{\alg }+\mathcal{N}_{\dif}$,  
where the singular part $\mathcal{N}_{\alg }=\sum_{j\in J} \,\lambda_j\cdot[\mathsf{C}_j]$
is some positive linear combination ($\lambda_j>0$; $J\subset \mathbb{Z}_+$, could be empty) of currents of integration on distinct irreducible
algebraic curves $\mathsf{C}_j$, and where the diffuse part $\mathcal{N}_{\dif}$ is a positive closed $(1, 1)$-current having zero Lelong 
number along any algebraic curve.
The idea was to handle the two parts
$\mathcal{N}_{\alg },\mathcal{N}_{\dif}$ separately by distinct techniques from algebraic geometry and complex dynamics respectively. To see the necessity, he raised

\begin{conjecture}[\cite{brunella1999}--p.~200] 
\label{Brunella conjecture 1}\rm
 Some entire curve shall produce a Nevanlinna current $\mathcal{N}=\mathcal{N}_{\alg }+\mathcal{N}_{\dif}$ with both nontrivial singular part $\mathcal{N}_{\alg }\neq 0$ and nontrivial diffuse part $   \mathcal{N}_{\dif}\neq 0$.
\end{conjecture}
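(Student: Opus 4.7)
The plan is to realize the conjecture inside $X:=\mathbb{CP}^1\times E$ for an elliptic curve $E$, by constructing a twisted entire map $f=(h,g):\mathbb{C}\to X$ whose Nevanlinna current, along a carefully chosen exhaustion $\{\overline{\mathbb{D}}_{r_j}\}_{j\geqslant 1}$, splits as $\oN=\lambda[C_0]+\oN_{\dif}$ with $\lambda\in(0,1)$ and $\oN_{\dif}$ genuinely diffuse, where $C_0:=\mathbb{CP}^1\times\{e_0\}$ is a fixed horizontal rational curve. Such a splitting simultaneously produces nontrivial $\oN_{\alg}=\lambda[C_0]$ and nontrivial $\oN_{\dif}$, confirming the conjecture in the simplest product of a rational and an elliptic curve.

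First I would invoke the torus result proved earlier in the paper to fix an entire curve $g_0:\mathbb{C}\to E$ whose normalized Nevanlinna currents converge to a diffuse current on $E$. Then I would design a geometric sequence of annular plateaus $A_j:=\{\rho_j<|z|<R_j\}$ with $\rho_{j+1}/R_j$ large, so that their Nevanlinna weights $\int_{\rho_j}^{R_j}\diff t/t$ accumulate to a prescribed positive fraction of the total. On each $A_j$, I would force the second coordinate of $f$ to lie in a shrinking neighbourhood of $e_0$ while the first coordinate $h$ covers $\mathbb{CP}^1$ with large multiplicity, so that $f$ hugs $C_0$ there and produces algebraic mass on it. Off the plateaus, $f$ should be a small perturbation of $(h_0,g_0)$ with $h_0$ a slowly growing meromorphic function, so that the off-plateau behaviour inherits diffuseness from $g_0$. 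The realization by a single entire map would proceed by Runge-type approximation: perturb $g_0$ additively by an entire function $\phi:\mathbb{C}\to E$ (via the universal cover) that is exponentially small off the plateaus and close to $e_0-g_0$ on them, and build $h$ via Weierstrass factorization with prescribed orders on the plateau annuli.

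With $f$ in hand, Ahlfors's averaging lemma furnishes a subsequence $r_{j_k}\to\infty$ along which the length-area condition \eqref{length-area condition} holds, and Banach--Alaoglu delivers a weak limit $\oN$. By the plateau construction, the Lelong number of $\oN$ along $C_0$ is at least $\lambda$, contributing $\lambda[C_0]$ to $\oN_{\alg}$; the complementary current of mass $\geqslant 1-\lambda$ inherits the diffuseness of $g_0$ and so carries zero Lelong number along every algebraic curve. The main obstacle, in my view, will be the coordinated tuning of the plateau scales: the annular widths and the multiplicities of $h$ must be chosen so that the plateau contribution survives normalization at level $\lambda$, yet boundary-length spikes from the rapid growth of $h$ on narrow annuli do not destroy the length-area condition, and the off-plateau portion must not accidentally concentrate on some other algebraic curve. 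This delicate interplay between length, area and Lelong-number control on nested annuli is precisely where the paper's refined construction method, extending the torus result to the twisted setting, is expected to do the work.
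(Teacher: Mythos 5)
Your plateau mechanism collides with the maximum modulus principle. You want the second coordinate $\pi\circ\widehat g$ of $f$ to lie in a shrinking neighbourhood of $e_0\in E$ on the annulus $A_j=\{\rho_j<|z|<R_j\}$, while inside $\D_{\rho_j}$ the map is to remain a small perturbation of the diffuse $(h_0,g_0)$. For $\epsilon$ small enough the translates $\{\D(e_0+\gamma,\epsilon)\}_{\gamma\in\Gamma}$ are pairwise disjoint, so if $\pi\circ\widehat g(A_j)$ lies in the $\epsilon$-neighbourhood of $\pi(e_0)$, then by connectivity of $A_j$ the lift satisfies $\widehat g(A_j)\subset \D(e_0+\gamma_0,\epsilon)$ for a single $\gamma_0\in\Gamma$. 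In particular $|\widehat g-(e_0+\gamma_0)|<\epsilon$ on the inner circle $\partial\D_{\rho_j}$, hence, by the maximum modulus principle, on all of $\overline\D_{\rho_j}$. So the second coordinate is forced to hug $e_0$ on the \emph{entire} disc $\overline\D_{R_j}$, not just on the annulus, and the interior diffuseness you are counting on is wiped out at every radius $R_j$. Because an annulus encircles the origin, you cannot make an entire function nearly constant there without nearly constantizing the whole inner disc; this is a hard topological obstruction, not a question of tuning parameters.

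The paper circumvents this precisely by never using separating regions. In Section~\ref{Sect. 3} the ``special'' regions are small discs $\D(\tx_{t,j},1)$, $\D(\ty_{t,k},1)$ that sit on the boundary circle $\partial\D_{R_t}$ without enclosing the origin: Lemma~\ref{lem-exist} produces an entire $\varphi_{t,1}$ with $|\varphi_{t,1}|>1$ only near a single boundary point and $<1$ elsewhere on $\overline\D_{R_t}$, and a high power $\varphi_{t,1}^{\alpha_t}$ makes the contrast extreme without any maximum-modulus conflict, since these small discs are not separating. A second structural difference: rather than seeding the construction with a pre-existing diffuse entire curve $g_0$ and trying to preserve its diffuseness off-plateau, the paper manufactures the diffuse part of the limiting current after the fact, as a weak limit of purely algebraic Nevanlinna/Ahlfors currents supported on equidistributed rational fibers (Lemma~\ref{A-C trick again} together with Observations 1 and 2). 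This decouples the design of the entire curve from the design of the limit current, which is what makes the delicate length-area and mass-distribution bookkeeping tractable. If you replace your annular plateaus by non-separating boundary discs and replace the built-in diffuse $g_0$ by a posteriori equidistribution, you essentially recover the paper's strategy.
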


Question~\ref{uniqueness question} (answer: no)
and Conjecture~\ref{Brunella conjecture 1} (answer: yes)
were solved  by Huynh and the second named author~\cite{huynh-xie-JMPA} by constructing explicit exotic examples.  

\begin{theorem}[\cite{huynh-xie-JMPA}]
\label{theorem HX21}
    There exists an entire curve $f: 
    \mathbb{C}\rightarrow X$ such that, 
    given any cardinality $|J|\in \mathbb{Z}_+\cup \{\infty\}$ and any a priori requirement that $\mathcal{T}_{\dif}$ is trivial or not, by taking certain increasing radii $\{r_j\}_{j\geqslant 1}$ tending to infinity, the sequence of holomorphic discs $\{f\,\lvert_{\overline\D_{r_j}}\}_{j
    \geqslant 1}$ yields a Nevanlinna/Ahlfors current  $\mathcal{T}=\mathcal{T}_{\alg }+\mathcal{T}_{\dif}$ with the desired shape $\mathcal{T}_{\alg }=\sum_{j\in J} \,\lambda_j\cdot[\mathsf{C}_j]$ in Siu's decomposition.
    {}
\end{theorem}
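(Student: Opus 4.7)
The plan is to manufacture a single entire curve $f\colon\C\to X$ on a suitable compact Kähler manifold $X$---a natural candidate being $X=\P^1\times E$ with $E$ an elliptic curve, which carries an abundant family of both rational and elliptic curves and enough ambient positivity to sustain a nontrivial diffuse $(1,1)$-current---whose concentric-disc restrictions realise a dense catalogue of Nevanlinna currents. Since the set of positive closed $(1,1)$-currents of mass $\leqslant 1$ on $X$ is compact and metrisable in the weak topology, it suffices to prescribe countably many target currents $\{\oT^{(k)}\}_{k\geqslant 1}$: any $\oT=\oT_{\alg}+\oT_{\dif}$ of the shape demanded by the theorem may then be reached by a diagonal argument from radii producing currents converging to $\oT$.

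For each catalogue entry $\oT^{(k)}=\sum_{j\in J_k}\lambda_j^{(k)}\cdot[\mathsf C_j^{(k)}]+\oT_{\dif}^{(k)}$ I would design a building block $h_k\colon A_k\to X$ on an annulus $A_k\subset\C$ as follows. Each algebraic summand $\lambda_j^{(k)}\cdot[\mathsf C_j^{(k)}]$ is modelled by composing a uniformization $\C\to\mathsf C_j^{(k)}$ (rational or elliptic, per Duval's dichotomy) with the inclusion into $X$, restricted to a subannulus on which the winding multiplicity around a shrinking tubular neighborhood of $\mathsf C_j^{(k)}$ is calibrated so that its area contributes precisely the mass fraction $\lambda_j^{(k)}$. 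A nontrivial diffuse contribution is produced by a disjoint subannulus mapping into $X$ with image area-dominantly dispersed (e.g.\ avoiding $\bigcup_j\mathsf C_j^{(k)}$); for $\oT_{\dif}^{(k)}=0$ this piece is simply omitted. Each $h_k$ is prepared so that the boundary length of its image is negligible compared to its area, internally respecting the condition~\eqref{length-area condition}.

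I would then splice the blocks into a single entire map by Runge-type approximation after embedding $X\hookrightarrow\P^N$: choose radii $0<s_1<r_1<s_2<r_2<\cdots\to\infty$ with rapidly growing gaps, prescribe $f$ to agree up to a conformal reparametrization with $h_k$ on $\{s_k\leqslant|z|\leqslant r_k\}$, and glue across the thin buffers $\{r_k\leqslant|z|\leqslant s_{k+1}\}$. The radii are tuned so that the Jensen-type integral $\fA_f(r_k)$ is dominated by the contribution of $h_k$ alone: the weight $\dd t/t$ automatically suppresses earlier blocks on the outer scale, whereas $h_k$ is designed to inflate its mass by a prescribed factor. An Ahlfors averaging argument then permits a small perturbation of each $r_k$ to secure~\eqref{length-area condition} at the evaluation radii, upgrading the weak limits to genuine Nevanlinna (and a fortiori Ahlfors) currents.

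The hard part will be the simultaneous quantitative balance of (i) the length-area condition along the selected radii, (ii) the prescribed Siu shape of the singular part, including the correct Lelong numbers $\lambda_j$ along each $\mathsf C_j$, and (iii) the dichotomy on $\oT_{\dif}$. Killing the diffuse part entirely forces the tubular wrappings to exhaust almost all the accumulated area, which requires delicately coupling the shrinking tube radii to the exploding winding multiplicities; conversely, allowing $\oT_{\dif}\neq 0$ demands capping each tube so that no single one engulfs the mass. Showing that all of these constraints can be met along one and the same entire curve is the crux of the argument; once that delicate bookkeeping of the Jensen integral is in place, compactness and the diagonal argument reduce the proof to extracting the desired limit shape from the catalogue.
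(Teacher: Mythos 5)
The paper cites Theorem~\ref{theorem HX21} from \cite{huynh-xie-JMPA} and does not reprove it; the closest thing to compare against is the proof of the strengthening Theorem~\ref{thm-shape} in Section~\ref{Sect. 3}. Your general plan (catalogue target currents, build blocks at different scales, glue via Runge approximation, extract by diagonal argument) and your choice $X=\P^1\times E$ both match that proof in spirit, but two load-bearing steps in your outline would fail.

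First, embedding $X\hookrightarrow\P^N$ and Runge-approximating the coordinates across buffer annuli does not produce maps landing in $X$: there is no elementary Runge theorem for targets confined to a compact subvariety, and perturbed coordinates generically leave $X$. The paper lifts instead to $\mathbb{C}^2$ via the covering $\Pi=(\wp,\pi)$, does all Runge approximation for the scalar entire functions $\widehat g_t,\widehat h_t$, and only pushes forward to $X=\P^1\times E$ at the very end; this lift is precisely what makes the approximation scheme run. Second, your mechanism for manufacturing a singular summand $\lambda_j\cdot[\mathsf C_j]$---composing a uniformization with the inclusion, restricted to a subannulus ``winding around a shrinking tubular neighborhood''---is internally inconsistent (that composition lands exactly on $\mathsf C_j$ and does not wind in any tube) and, more importantly, gives no quantitative handle on the Lelong number of the limit along $\mathsf C_j$ relative to the accumulated area, the length-area ratio, and the diffuse remainder all at once. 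You correctly flag this as ``the crux,'' but the proposal does not close it. The paper's device is explicit: append weighted high powers $p_{t,j}\,\varphi_{t,j}^{\alpha_t}$ and $q_{t,k}\,\psi_{t,k}^{\alpha_t}$ of Runge functions that spike near designated points of $\partial\D_{R_t}$ and decay elsewhere, then use Bloch's theorem, the periodicity of $\wp$ and $\pi$, and the cohomological identity $\wp^*\omega_{\FS}=c\,\omega_E+\dd\theta$ on $E$ (Lemma~\ref{lem-area-equal}) to pin the area ratios and hence the masses. Without a mechanism of comparable precision, the Jensen-integral bookkeeping you defer cannot be completed.
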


Here $X$ is some $\mathbb{CP}^1$-bundle over an elliptic curve. The construction of $f$ 
is \textsl{ad hoc}. For instance, 
it relies crucially on the rotational symmetry $\sqrt{-1}\cdot \Gamma=\Gamma$ of the  Gaussian integer lattice  $\Gamma=\mathbb{Z}[\sqrt{-1}]$~(\cite[Lemma 3.2]{huynh-xie-JMPA}). Moreover, it
has the feature that all the
obtained Nevanlinna/Ahlfors currents charge positive mass on certain unique elliptic curve  
$\mathsf{C}_{\infty}$ in $X$,
hence failing to embrace the purely diffuse case where $|J|=0$. In hindsight, the existence of such $f$ reflects the fact that $X$ is an Oka manifold~\cite{Forstneric-Book}.

\medskip
After receiving an early manuscript of
\cite{huynh-xie-JMPA}, 
Sibony asked

\begin{question}[\cite{Sibony-email}]
\label{Sibony-question}\rm
	Show that for $Y=\mathbb{CP}^n$, or complex torus, all Ahlfors
	 currents can be obtained by single entire curve $f: \mathbb{C}\rightarrow Y$.	
\end{question}

Using holomorphic discs 
$\{f\,\lvert_{ \overline \D(z, r)}\}_{z\in \mathbb{C}, r>0}$ 
not necessarily centered at the origin, Sibony's Question~\ref{Sibony-question} has been confirmed by the second named author.  In fact, such phenomenon holds 
for a large class of compact complex manifolds with certain weak Oka property~\cite[Theorem A]{Xie-2023}.  For testing the Oka principle~\cite{Forstneric-Book}, 
we may also extend Sibony's Question~\ref{Sibony-question} as follows.

\begin{conjecture}[\cite{Xie-2023}]
\label{Xie's conjecture}
\rm
Let $X$ be a compact Oka manifold.
	There should be some entire curve $g: \mathbb{C}\rightarrow X$ such that  $\{g\,\lvert_{\overline \D_r}\}_{r>0}$  generates all Nevanlinna/Ahlfors currents on $X$.
\end{conjecture}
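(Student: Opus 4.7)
The plan is to combine a density argument in the space of target currents with an Oka-principle gluing. The set of Nevanlinna/Ahlfors currents on the compact manifold $X$ lies in the weakly compact, metrizable cone of positive closed $(1,1)$-currents of mass $1$, and so admits a countable dense family $\{\oT_n\}_{n\geqslant 1}$ of realisable targets. For each $\oT_n$ the first task is to produce an auxiliary holomorphic disc $h_n:\overline\D_{R_n}\to X$ whose normalized Ahlfors current approximates $\oT_n$ to within $1/n$ in the fixed weak metric. On a complex torus $\mathbb{T}=\mathbb{C}^k/\Lambda$ one may take $h_n$ linear, $z\mapsto v_n z+p_n$, since the induced cohomology class depends continuously on the direction $v_n$, and the positive mass-$1$ cone in $H^{1,1}(\mathbb{T},\R)$ is spanned by such contributions together with their translates; this handles the torus case announced in the paper. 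For a general compact Oka manifold the abundant supply of holomorphic discs, together with the Runge-type approximation built into the Oka principle, should in principle permit the same construction of approximating discs for an arbitrary target.

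The second step is to assemble the $h_n$ into a single entire curve $g:\C\to X$. I would choose rapidly increasing radii $R_1\ll R_2\ll\cdots$ and construct $g$ as the uniform-on-compacta limit of holomorphic maps $g_n:\C\to X$ built inductively via the basic Oka property applied to the Runge pair $\overline\D_{R_n}\subset\C$: each $g_{n+1}$ agrees with $g_n$ up to an arbitrarily small $\mathcal{C}^1$-error on $\overline\D_{R_n}$, and on the annulus $\{R_n<|z|<R_{n+1}\}$ interpolates between $g_n$ and a prescribed translate of $h_{n+1}$. If the successive errors form a summable sequence, then the limit $g$ is an entire curve and each $g\lvert_{\overline\D_{R_n}}$ induces a normalized Ahlfors current within $2/n$ of $\oT_n$; extracting weak subsequences then realizes every element of $\overline{\{\oT_n\}}$, which by construction exhausts all admissible currents.

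The main obstacle is the persistence of earlier mass under the logarithmic Nevanlinna averaging: the contribution of the behaviour on $\overline\D_{R_k}$ to $\fA_g(R_n)=\int_0^{R_n}\frac{\dd t}{t}\int_{\D_t}g^*\omega$ does not automatically decay when $n\gg k$. One must therefore arrange that the area of the $n$-th generation dominates all earlier ones, so that $\fA_g(R_n)/\fA_g(R_{n-1})\to\infty$ at a sufficiently fast rate, while simultaneously keeping the length-area ratio \eqref{length-area condition} tending to zero along $\{R_n\}$. This forces a delicate interplay between the disc sizes $R_n$, the area growth of the $h_n$, and the accuracy of the Oka approximation; this bookkeeping rather than the existence of approximating discs is what I expect to be the decisive difficulty, and it is presumably the reason the paper establishes the conjecture only at the cohomological level, even for tori (compare the constructions of \cite{huynh-xie-JMPA, Xie-2023}).
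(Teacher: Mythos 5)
First, a framing point: the statement you have been asked to prove is stated in the paper as a \emph{conjecture} that remains open in general. The paper establishes it only for complex tori (Theorem~\ref{thm-class}), and even there only at the level of cohomology classes, not of currents themselves. So a complete proof would be a major advance; what should be compared is your sketch against the paper's proof of the torus case.

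There is a genuine gap in the torus step. You propose that each target current $\oT_n$ can be approximated to within $1/n$ by a single linear disc $h_n(z)=v_n z+p_n$. But as the paper's \textbf{Observation} in Section~\ref{sect. 2} shows, an affine entire curve in $\T=\C^n/\Lambda$ produces, after normalization, exactly the \emph{extremal} current $\oT_{[\vec v_n]}$ whose cohomology class is given by the explicit formula \eqref{value-T-v}. A general Nevanlinna/Ahlfors current is cohomologous to a convex combination $T'=\sum_{s=1}^n\beta_s\,\oT_{[\vec v_s]}$ over an orthonormal frame $\{\vec v_s\}$ (Proposition~\ref{key observation}, via Hodge theory and the spectral theorem for Hermitian forms), and these convex combinations are typically \emph{not} extremal points. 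You write that the mass-one cone in $H^{1,1}(\T,\R)$ is ``spanned'' by the $\oT_{[\vec v]}$ and conclude that a single linear disc suffices; this conflates being in the convex hull with being an extremal generator. The crux of the paper's construction is precisely to realize the convex combination inside a \emph{single} concentric disc. This is done by adding, at step~$t$, a sum $\sum_{j=1}^n\frac{\sqrt{\beta_{t,j}}}{\alpha_t}\varphi_{t,j}^{\alpha_t}\vec v_{t,j}$ of perturbations each localized near a distinct boundary point $x_{t,j}\in\partial\D_{R_t}$, so that near $x_{t,j}$ the curve behaves like a steep linear map in direction $\vec v_{t,j}$; the weights $\beta_{t,j}$ then reappear as the relative area contributions. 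The spectral decomposition and the boundary-localization mechanism are what your $h_n$ does not supply.

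The later stages of your plan are closer to the paper but still informal. Your observation that later-generation area must dominate earlier-generation area to overcome the logarithmic Nevanlinna averaging is the right concern; the paper quantifies this by the single exponent $\alpha_t\gg 1$, which simultaneously forces the length-area ratio below $2^{-t}$ and pushes $\fA_{F_t}(R_t)$ to grow like $(m^{3/2})^{\alpha_t-1}$ while keeping $\widehat F_t$ within $2^{-t}\ep_{t-1}$ of $\widehat F_{t-1}$ on $\D_{R_{t-1}+1}$ (conditions $(\spadesuit)$, $(\heartsuit)$, $(\clubsuit)$ of Proposition~\ref{key technical difficulty}). Your proposed mechanism --- interpolating on annuli via an Oka--Runge gluing to a prescribed translate of $h_{n+1}$ --- is vaguer and does not by itself control the competition between old and new mass. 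Finally, for an arbitrary compact Oka manifold you write that the Oka principle ``should in principle permit the same construction,'' but this is exactly the missing step: in the absence of Hodge theory and a spectral decomposition one has no analogue of Proposition~\ref{key observation}, and the paper itself only offers a heuristic strategy (building a good auxiliary map $\C^n\to Y$) rather than a proof. Your proposal is a reasonable roadmap of the difficulties, but it does not close them.
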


 In this paper, we develop further a technique introduced in~\cite[Sect.\ 3]{Xie-2023}  to verify Conjecture~\ref{Xie's conjecture} for any complex torus, hence answering Sibony's Question~\ref{Sibony-question} using only concentric holomorphic discs. 

\begin{mainthm}\label{thm-class}
There exists an entire curve $\mathbf F: \mathbb{C}\rightarrow \mathbb{T}$ into any given complex torus $\mathbb{T}$, such that the concentric holomorphic discs
$\{\mathbf F\,\lvert_{ \overline \D_R}\}_{R>0}$ 
can generate all the Nevanlinna/Ahlfors currents on $\T$, up to cohomological equivalence.  
\end{mainthm}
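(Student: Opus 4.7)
The plan is to work on the universal cover $\C^n \to \T = \C^n/\Lambda$ and lift $\mathbf F$ to an entire map $f=(f_1,\dots,f_n)\colon \C\to \C^n$. Nevanlinna and Ahlfors currents on $\T$ can then be analyzed via their pairings with the translation-invariant harmonic $(1,1)$-forms $\omega_H:=i\sum_{j,k}H_{jk}\,dz_j\wedge d\bar z_k$, whose cohomology classes exhaust $H^{1,1}(\T,\R)$ as $H$ ranges over Hermitian $n\times n$ matrices; the subcone of classes represented by closed positive $(1,1)$-currents corresponds to positive semi-definite $H$, and after fixing a reference Hermitian form and normalizing the total mass, the unit-mass classes form a compact convex set $\Kc$. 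A short Fubini computation shows that the cohomology class of any weak limit of $\oN_{\mathbf F|_{\overline\D_{R_j}}}$ is represented by $\omega_{H_\infty}$, where $H_\infty=\lim_j T(R_j)/\operatorname{tr} T(R_j)\in \Kc$ and
\[
T(R)_{jk}\,:=\,\int_0^R \frac{dt}{t}\int_{\D(0,t)} f_j'(z)\,\overline{f_k'(z)}\,\,i\,dz\wedge d\bar z.
\]
Thus Theorem~A reduces to producing one entire $f$ whose orbit $\{T(R)/\operatorname{tr} T(R):R>0\}$ is dense in $\Kc$. The Ahlfors case proceeds in parallel, since replacing $\int_0^R \frac{dt}{t}\int_{\D(0,t)}$ by $\int_{\D(0,R)}$ only reweights each rank-one term below by an extra factor $N_m$.

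My candidate is a lacunary series $f(z)=\sum_m\vec a_m z^{N_m}$ with $\vec a_m\in \C^n$ and a rapidly increasing sequence of integer exponents $N_1<N_2<\cdots$. Angular orthogonality $\int_0^{2\pi}e^{i(N_m-N_l)\theta}d\theta=0$ for $N_m\neq N_l$ annihilates every cross-term, yielding
\[
T(R)\,=\,\pi\sum_{m}R^{2N_m}\,\vec a_m\otimes\bar{\vec a}_m,
\]
a weighted positive combination of rank-one Hermitian matrices. I would then fix a countable dense family $\{H_k\}_{k\geqslant 1}\subset \Kc$ with spectral decompositions $H_k=\sum_{i=1}^{n}\lambda_{k,i}\,v_{k,i}\otimes \bar v_{k,i}$. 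To approximate $H_k$ at a prescribed scale $R_k\to \infty$, allocate a cluster of $n$ close-by exponents $N_{k,1}<\dots<N_{k,n}$ placed far above all previously used exponents, and set
\[
\vec a_{k,i}\,:=\,c_k\,\sqrt{\lambda_{k,i}}\,R_k^{-N_{k,i}}\,v_{k,i}
\]
for an amplitude $c_k>0$ to be chosen. A direct calculation then gives $\sum_{i=1}^n R_k^{2N_{k,i}}\vec a_{k,i}\otimes \bar{\vec a}_{k,i}=c_k^2\, H_k$, so provided that contributions from the other clusters are negligible at $R=R_k$, one has $T(R_k)/\operatorname{tr} T(R_k)\approx H_k$.

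The main obstacle is the hierarchical bookkeeping: one must pick $R_k,\ c_k,\ N_{k,i}$ so that (i) contamination from clusters $k'\neq k$ is negligible at $R=R_k$, which will force $c_{k+1}$ and the gap $N_{k+1,1}-N_{k,n}$ to grow sufficiently fast relative to $R_k$; (ii) the series $\sum_m \vec a_m z^{N_m}$ converges on all of $\C$, which is automatic from the $R_k^{-N_{k,i}}$ decay built into $\vec a_{k,i}$ once $R_k\to\infty$ at a tame rate; and (iii) the length-area conditions~\eqref{length-area condition} and~\eqref{length-area-condition-Ahlfors} are satisfied along the selected radii, which can be secured by invoking Ahlfors' Lemma to pass to a suitable subsequence of $\{R_k\}$. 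Density of $\{H_k\}$ in $\Kc$ together with continuity of $R\mapsto T(R)/\operatorname{tr} T(R)$ then yields that every cohomology class of a closed positive $(1,1)$-current on $\T$ is realized, up to cohomological equivalence, by the Nevanlinna/Ahlfors current of $\mathbf F:=\pi\circ f$ along some sequence of concentric disc radii.
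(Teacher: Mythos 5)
Your approach is genuinely different from the paper's, and in its main thrust it is sound. Both you and the paper reduce, via Hodge theory and Poincar\'e duality, the cohomology class of a Nevanlinna/Ahlfors current on $\T$ to a unit-trace positive-semidefinite Hermitian matrix, and both decompose the target matrix into rank-one pieces by the spectral theorem (compare your spectral step with Proposition~\ref{key observation}). But the realizations differ fundamentally. The paper builds bump functions $\varphi_{t,j}^{\alpha_t}$, localized by Runge's theorem in small \emph{disjoint} boundary neighborhoods of points $x_{t,j}$, and adds them in the directions $\vec v_{t,j}$; independence of the $n$ rank-one contributions then comes from spatial disjointness, and cross-terms must be estimated away. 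You instead use a lacunary monomial series $\sum_m \vec a_m z^{N_m}$ and exploit the exact angular orthogonality $\int_0^{2\pi} e^{i(N_m-N_l)\theta}\,\dd\theta = 0$ for $m\neq l$, which annihilates every cross-term and yields the clean identity $T(R)=\pi\sum_m R^{2N_m}\,\vec a_m\otimes\bar{\vec a}_m$ with no error; independence now comes from Fourier orthogonality rather than spatial separation. For Theorem~\ref{thm-class} this is arguably slicker. The paper's Runge machinery is, however, more robust: it is essential for Theorem~\ref{thm-shape}, where the bumps must be precomposed with $\wp$ and $\pi$ and one needs to pin $\mathbf F$ to prescribed \emph{points} of $\C\P^1\times E$, something a monomial lacunary series cannot do.

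There is one genuine gap, in your item (iii): the length-area conditions along your chosen radii $\{R_k\}$ cannot be ``secured by invoking Ahlfors' Lemma to pass to a suitable subsequence of $\{R_k\}$.'' Ahlfors' Lemma produces \emph{some} radii along which the ratio tends to zero, but gives no control over whether those radii lie among the $R_k$ you selected to hit $H_k$, and extracting a subsequence from a sequence that fails \eqref{length-area condition} cannot repair it. You must instead verify the conditions \eqref{length-area condition} and \eqref{length-area-condition-Ahlfors} directly at each $R_k$ as part of the induction, exactly as the paper does (condition $(\spadesuit)$, proved via \eqref{length-upper-bound-class}--\eqref{area-lower-bound-class}). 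Fortunately this is achievable in your setup: Cauchy--Schwarz plus angular orthogonality give $\Length_\omega\bigl(\mathbf F(\partial\D_r)\bigr)\lesssim \bigl(\sum_m N_m^2\|\vec a_m\|^2 r^{2N_m}\bigr)^{1/2}$ while $\Area_\omega\bigl(\mathbf F(\D_r)\bigr)=\pi\sum_m N_m\|\vec a_m\|^2 r^{2N_m}$, and the very contamination bounds that isolate cluster $k$ at $R=R_k$ (namely $c_{k'}^2(R_k/R_{k'})^{2N_{k',\mathrm{max}}}\ll c_k^2$ for $k'<k$ and $c_{k''}^2(R_k/R_{k''})^{2N_{k'',1}}\ll c_k^2$ for $k''>k$) force both ratios at $R_k$ to be $O(1/c_k)$, which tends to zero once you let $c_k\to\infty$. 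So the fix is to replace the appeal to Ahlfors' Lemma by these direct estimates, woven into the same inductive choice of $R_k$, $c_k$, $N_{k,i}$. With that amendment your argument goes through.
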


It foreshadows 
striking  holomorphic flexibility of entire curves in Oka geometry. 
The construction of such entire curves $\mathbf F$ is based on a sophisticated induction process. The key ingredients are Hodge theory on complex tori,
the spectral theorem for Hermitian  matrices, and delicate area-growth estimates.

\medskip
 Recall a result~\cite{Duval2006} of Duval that   each 
irreducible component $\mathsf{C}_j$
in the singular part
$\mathcal{T}_{\alg }=\sum_{j\in J} \,\lambda_j\cdot[\mathsf{C}_j]$ of an Nevanlinna/Ahlfors current $\mathcal{T}=\mathcal{T}_{\alg }+\mathcal{T}_{\dif}$ must be  rational or elliptic. Thus we can rewrite
$\mathcal{T}_{\alg}$ as
 \begin{equation}
     \label{aimed singular part}
     \sum_{j\in J_{\mathsf{E}}} 
     \,c^{\mathsf{E}}_{j}\cdot[\mathsf{E}_j]+\,
    \sum_{k\in J_{\mathsf{R}}} \,c^{\mathsf{R}}_k\cdot[\mathsf{R}_k]
 \end{equation}
     for some distinct irreducible elliptic curves  $\mathsf{E}_j$ and some distinct
     rational curves $\mathsf{R}_k$, and for some strictly positive constants $c_j^{\mathsf E}, c_k^{\mathsf R}$   ($J_{\mathsf{E}}, J_{\mathsf{R}}$ are countable sets, could be empty). 
     Thus we can 
classify Nevanlinna/Ahlfors currents  in different shapes by the data
\begin{equation}
    \label{all shapes data}
\big(|J_{\mathsf{E}}|\in \mathbb{Z}_{\geqslant 0}\cup \{\infty\}, \,
|J_{\mathsf{R}}|\in \mathbb{Z}_{\geqslant 0}\cup \{\infty\}, \,
\mathcal{T}_{\dif} \text{ is trivial/nontrivial}\big).
\end{equation}

    \smallskip 
The next question is very natural, and was raised by 
  Shing-Tung Yau and
Xiangyu Zhou independently
to the second named author around the same time (January 2021).

\begin{question}[Yau, Zhou]
	\label{Yau-question-2}\rm
 Is there any  a priori restriction on  the numbers $|J_{\mathsf{E}}|$, $|J_{\mathsf{R}}|$ of elliptic and  rational components in the singular parts of Nevanlinna/Ahlfors currents? 
 \end{question}

Let $E=\C/\Gamma$ be 
 an elliptic curve,  where $\Gamma\subset \C$ is some lattice.  
 Set $X:=\mathbb{CP}^1\times E$. Denote by
 $\pi_1: X\rightarrow \mathbb{CP}^1$, $\pi_2: X\rightarrow E$
 the projections onto the first and the second factors respectively.
 Let 
 $\omega_\FS$ be the Fubini-Study form on $\C\P^1$ giving unit area
 $\int_{\C\P^1}\omega_\FS=1$.  Let $\omega_E$ be the Hermitian form on $E$ induced  from the standard Euclidean form $\omega_{\C}:={i\over 2} \,\dd z\wedge \dd \overline z$ on $\C$, with area
 $\int_E \omega_E=:\varrho$. 
 Here  is a strengthening of Theorem~\ref{theorem HX21}.

\begin{mainthm}\label{thm-shape}	There exists an entire curve $\mathbf{F}:\C\rightarrow X$ producing all shapes~\eqref{all shapes data} of Nevanlinna/Ahlfors currents
by $\{\mathbf F\,\lvert_{ \overline \D_R}\}_{R>0}$. 
\end{mainthm}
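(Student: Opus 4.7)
The plan is to construct an entire curve $\mathbf{F} = (\mathbf{f}, \mathbf{g}) : \C \to \C\P^1 \times E$ by an inductive annular scheme that encodes every shape in (\ref{all shapes data}) at some radius, so that $\{\mathbf{F}|_{\overline\D_R}\}_{R>0}$ realizes any prescribed shape along a subsequence of concentric discs. The construction refines and iterates the machinery behind Theorem~\ref{thm-class}.

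First, fix countable dense sets $\{q_j\}_{j\ge 1} \subset \C\P^1$ and $\{p_k\}_{k\ge 1} \subset E$, labelling elliptic curves $\mathsf{E}_j := \{q_j\} \times E$ and rational curves $\mathsf{R}_k := \C\P^1 \times \{p_k\}$. Enumerate a master list of shape targets $\{\sigma_n\}_{n\ge 1}$, each specifying $(|J_{\mathsf E}^{(n)}|, |J_{\mathsf R}^{(n)}|, \text{trivial or nontrivial } \oT_\dif)$ together with compatible rational positive weights of total mass at most $1$. Fix a bijection $\pi : \N \to \N \times \N$, $\pi(m) = (n(m), k(m))$, so that each index $n$ appears infinitely often.

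Second, carve $\C$ into nested annuli $A_m = \{\rho_{m-1} < |z| \le \rho_m\}$ with radii $\rho_m$ to be chosen extremely rapidly growing. On $A_m$, dedicate the construction of $\mathbf{F}$ to approximating $\sigma_{n(m)}$ at the outer radius. Partition $A_m$ into consecutive sub-annuli; on each sub-annulus, $\mathbf F$ follows one of three regimes: (i) for a desired elliptic component, pin $\mathbf{f}$ close to some chosen $q_j$ to high order while $\mathbf{g}$ equidistributes on $E$, forcing mass onto $\mathsf E_j$; (ii) for a rational component, pin $\mathbf{g}$ close to some $p_k$ while $\mathbf{f}$ sweeps $\C\P^1$, forcing mass onto $\mathsf R_k$; (iii) for diffuse mass, pair an $E$-equidistributing $\mathbf g$ (a variant of the curve from Theorem~\ref{thm-class} applied to $E$) with a $\C\P^1$-equidistributing $\mathbf f$ (say, a suitably rescaled exponential), producing positive closed mass on $X$ that avoids any fixed algebraic curve. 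Calibrate the sub-annular areas so that their relative weights reproduce the specification of $\sigma_{n(m)}$. Glue the annular data into a single entire curve via Runge/Arakelyan approximation on $\C$, with transition regions of controllably small area.

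Third, choose $\rho_m$ recursively so large that the area $\fA_{\mathbf F}(\rho_m)$ contributed on $A_m$ dominates $\fA_{\mathbf F}(\rho_{m-1})$ by a factor at least $k(m)$, and the length-area condition (\ref{length-area condition}) holds at $\rho_m$. Then $\oN_{\mathbf F|_{\overline\D_{\rho_m}}}$ has Siu shape within cohomological distance $1/k(m)$ of $\sigma_{n(m)}$. For each $n$, the subsequence of radii indexed by $\{m : n(m) = n\}$ produces a weak limit of exactly shape $\sigma_n$, covering all cases of (\ref{all shapes data}).

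The main obstacle is the purely diffuse shape $|J_{\mathsf E}^{(n)}| = |J_{\mathsf R}^{(n)}| = 0$, absent from \cite{huynh-xie-JMPA} where a fixed elliptic anchor always charged positive mass. Here the diffuse regime on $A_m$ must simultaneously (a) overwhelm in area all previously accrued algebraic concentration so it vanishes after normalization; (b) equidistribute both factors so that no Lelong mass survives at any $q_{j'}$ or $p_{k'}$; and (c) preserve the length-area condition at $\rho_m$. Calibrating these competing growth rates, in the spirit of the spectral and Hodge-theoretic estimates powering Theorem~\ref{thm-class}, is the delicate heart of the proof.
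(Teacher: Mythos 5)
Your high-level scheme — densely enumerate target shapes, Runge-glue an entire curve inductively so that each radius approximates one target, and exploit a surjection onto each factor — does echo the paper's approach. But two of the crucial mechanisms are either absent or unlikely to work as stated.

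First, your regime~(iii) proposes to obtain a purely diffuse Nevanlinna/Ahlfors current by pairing an $E$-equidistributing $\mathbf g$ with a $\C\P^1$-equidistributing $\mathbf f$. Marginal equidistribution controls only the pushforwards $\pi_{1*}\oT$ and $\pi_{2*}\oT$: it rules out singular mass along the fibers $\pi_1^{-1}(x)$ and $\pi_2^{-1}(y)$, but not along the other irreducible elliptic curves in $\C\P^1\times E$ (multi-sections, e.g.\ graphs of ramified covers $E'\to\C\P^1$ with an isogeny $E'\to E$), which Duval's theorem permits as components of $\oT_{\alg}$. You have no control over the joint distribution, so there is no reason the limit current has zero Lelong number along those curves. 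Moreover, maintaining the length-area condition simultaneously with both marginal equidistributions is a nontrivial interference problem that you leave unaddressed. The paper avoids all of this: it never directly constructs a diffuse regime. Instead, it proves (Lemma~\ref{A-C trick again}) that the set of Nevanlinna/Ahlfors currents obtainable from concentric discs of a fixed entire curve is \emph{closed under weak limits}, and then realizes the diffuse part as the weak limit $\frac{c}{s}\sum_{\ell=1}^s[\pi_2^{-1}(w_{s,\ell})]\to \frac{c}{\varrho}\,\pi_2^*\omega_E$ of sums of rational fiber currents with equidistributed base points $w_{s,\ell}\subset E$. The resulting diffuse current is a smooth form, so its diffuseness is immediate, and the same lemma also upgrades rational weights and a dense set of fibers to arbitrary real weights and arbitrary fibers.

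Second, your calibration step ("reproduce the specification of $\sigma_{n(m)}$") is where the paper does real work that you gloss over. To compare the areas deposited near an elliptic fiber (controlled by $\pi^*\omega_E$) with those deposited near a rational fiber (controlled by $\wp^*\omega_\FS$), the paper uses the total projection $\Pi=(\wp,\pi):\C^2\to \C\P^1\times E$ built from the Weierstrass $\wp$-function, together with the cohomological identity $\widetilde\wp^{\,*}\omega_\FS=c\,\omega_E+\dd\theta$ on $E$ (with $c=2/\varrho$), which reduces the area-ratio calibration to a single exact formula (Lemma~\ref{lem-area-equal}, plus Bloch's theorem for the lower bounds in Proposition~\ref{key-prop-shape}). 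Without a concrete surjection $\C\to\C\P^1$ of this kind and a device to compare the two area forms, your "calibrate the sub-annular areas" step is not an argument. So the proposal correctly identifies the difficulty of the pure-diffuse case, but the mechanism you propose for it is gapped, and the quantitative calibration that makes the whole scheme run is missing.
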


 In particular, this answers Yau and Zhou's Question~\ref{Yau-question-2}
completely.
In fact, we can show that, for any singular current $T$ of the shape~\eqref{aimed singular part}, where
$\mathsf{E}_j=\pi_1^{-1}(x'_j)$, $\mathsf{R}_k=\pi_2^{-1}(y'_k)$ are fibers of  
$\pi_1$, $\pi_2$ for arbitrary distinct points $\{x_j'\}_{j=1}^{|J_{\mathsf{E}}|}\subset\C\P^1$, $\{y_k'\}_{k=1}^{|J_{\mathsf{R}}|}\subset E$, 
and where the coefficients $c^{\mathsf{E}}_{j}, c^{\mathsf{R}}_k>0$ satisfy that the total mass $\varrho\cdot\sum_{j\in J_{\mathsf{E}}} c^{\mathsf{E}}_{j}+
    \sum_{k\in J_{\mathsf{R}}} c^{\mathsf{R}}_k$ of $T$ is less than or equal to $1$, then there exists some sequence 
    of increasing radii $\{R_s\}_{s\geqslant 1}$ tending to infinity such that
    $\{\mathbf F\,\lvert_{ \overline \D_{R_s}}\}_{s\geqslant 1}$
 generates a Nevanlinna/Ahlfors current $\oT=\oT_{\alg}+\oT_{\dif}$ having the aimed singular part $\oT_{\alg}=T$. This is a  surprising result!
 Previously we would  hesitate to imagine such tremendous holomorphic flexibility of entire curves. 

 The proof of Theorem~\ref{thm-shape} is quite challenging. First of all, we need to design an intriguing model of entire curves with parameters.
Next, we have to determine the  parameters properly, see Proposition~\ref{Area distribution}. One natural idea is to make use of the Brouwer fixed point theorem. As a matter of fact, that was our original approach,
which was indeed successful though involved. 
Nevertheless,  we managed to
obtain quite explicit formulas about these parameters by using a trick --- see~\eqref{Duval's trick},~\eqref{Duval's trick 2} --- in spirit of Ahlfors' covering surface theory (cf.~\cite{MR0279280, MR3342651}).  
In general, our strategy  could work  as well for any compact Oka manifold $Y$ containing infinitely many
rational and elliptic curves respectively, by constructing a ``good''  auxiliary holomorphic map $\C^n\rightarrow Y$ for some $n\geqslant 1$.  We believe that such an approach
will be  a key towards Conjecture~\ref{Xie's conjecture}.

\medskip
\noindent\textbf{Notation:} Throughout this paper, the symbols $\lesssim$ and $\gtrsim$ stand for inequalities up to a positive multiplicative constant.  The dependence of these constants on certain parameters, or lack thereof, will be clear from the context.  The symbols $\gg$ and $\ll$ mean much larger and much smaller respectively.

\bigskip\noindent
{\bf Acknowledgments.}
This work is inspired by insightful questions of Sibony, Yau, and Zhou.
We thank Dinh Tuan Huynh, Bin Guo (AMSS), Lei Hou,  Yi C. Huang for helpful suggestions which improved the exposition.

\bigskip\noindent
{\bf Funding.} H. Wu is supported by 
the NUS and MOE grants A-0004285-00-00 and MOE-T2EP20120-0010.
S.-Y. Xie is
partially supported by 
National Key R\&D Program of China Grant
No.~2021YFA1003100 and  NSFC Grant No.~12288201.

\medskip

\section{\bf Twisted entire curves in complex tori producing all Nevanlinna currents}\label{sect. 2}

In this section, we will construct an entire curve $\mathbf F:\C\to \T$ for Theorem~\ref{thm-class}. The key ingredient is Hodge theory (cf.~\cite{Griffiths-Harris, MR1967689}) on complex tori, as we present now.

\smallskip
Let $X$ be a compact  K\"ahler manifold of complex dimension $n$. 
The \textit{de Rham cohomology group} $H^l(X,\C)$ for every $0\leqslant l\leqslant 2n$ is  the quotient of the $\mathbb{C}$-linear space of  closed $l$-forms (resp. $l$-currents) by the subspace of exact $l$-forms (resp. $l$-currents). We can also define the real group $H^l(X,\R)$ using  real forms or currents. The \textit{Hodge cohomology group} $H^{p,q}(X,\C)$ for each $0\leqslant p,q\leqslant n$ is the subspace of $H^{p+q}(X,\C)$ generated by the class of closed $(p,q)$-forms  (resp. currents). Hodge decomposition states that 
$$H^l(X,\C)=\oplus_{p+q=l}\, H^{p,q}(X,\C).$$
The Poincar\'e duality gives a perfect pairing 
\begin{align*}
H^{p,q}(X,\C) 
\times H^{n-p,n-q}(X,\C)
\longrightarrow 
\mathbb{C},
\quad
([\phi_1], [\phi_2])
\mapsto
\int_{X}
\phi_1\wedge \phi_2.
\end{align*}
When $p=q$, define 
$$ H^{p,p}(X,\R):=H^{p,p}(X,\C)\cap H^{2p}(X,\R).$$
It is clear that all Nevanlinna/Ahlfors on $X$ currents are in  $H^{n-1,n-1}(X,\R)$. 

\smallskip

Let $\omega$ be the Hermitian form on $\mathbb{T}$ induced by the standard Euclidean form $\omega_{\C^n}:=\frac{i}{2} \,\sum_{j=1}^n \dd z_j\wedge \dd \overline z_j$  through the canonical projection $\Pi: \C^n\rightarrow \mathbb{T}=\C^n/\Lambda$.
 The complex vector space $H^{n-1,n-1} (\T,\C)$ admits a $\mathbb{C}$-linear basis (cf.~\cite[p.~301]{Griffiths-Harris})
\[
\big\{\dd z_I\wedge \dd \overline{z}_J:\,
I, J\subset \{1, \dots, n\}, |I|=|J|=n-1 \big\}
\]
 induced from $\mathbb{C}$-coefficient $(n-1, n-1)$-forms on $\C^n$,
where for $I=\{i_1, \dots, i_{n-1}\}$ and $J=\{j_1, \dots, j_{n-1}\}$, 
we abbreviate
$\dd z_I:=
\dd z_{i_1}\wedge\cdots \wedge \dd z_{i_{n-1}}$ and
$\dd \overline{z}_J:=
\dd \overline{z}_{j_1}\wedge\cdots \wedge \dd \overline{z}_{j_{n-1}}$.

\smallskip

For any $\vec{v}=(v_1,v_2,\dots,v_n)\in \mathbb{C}^n\setminus \{0\}, \vec{c}\in \mathbb{C}^n$, we will show that
the affine entire curve $F_{v,c}(z)=\Pi(z\cdot \vec{v}+\vec{c})$ from $\mathbb{C}$ to 
 $\mathbb{T}$ produces a unique Nevanlinna/Ahlfors current, denoted by
 $\oT_{[\vec{v}]}$,  up to cohomological equivalence in $H^{n-1,n-1}(\T,\R)$. 
  To see that $\oT_{[\vec{v}]}$  is unique, independent of $\vec{c}$, and only depends on the equivalent class $[\vec{v}]\in \mathbb{CP}^{n-1}$, 
 we need the following

\smallskip\noindent
{\bf Observation.} 
For any $(1,1)$-class $\phi$ in $H^{1,1}(\T,\C)$ induced by the  $(1,1)$-form $\sum_{k,\ell=1}^n\xi_{k,\ell} \, \dd z_k \wedge \dd \overline z_\ell$ on $\C^n$, where $\xi_{k,\ell}\in \mathbb{C}$,
one has a neat formula
\begin{equation}\label{value-T-v}
\lp \oT_{[\vec{v}]}, \phi\rp
=-2i \sum_{k,\ell=1}^n   v_k \, \xi_{k,\ell} \, \overline v_\ell /\norm{\vec{v}}_{\mathbb{C}^n}^2,
\end{equation}
where $\lVert\cdot\rVert_{\mathbb{C}^n}$ stands for the stancard Euclidean norm on $\mathbb{C}^n$.  
Hence the uniqueness of $\oT_{[\vec{v}]}$ in $H^{n-1,n-1}(\T,\C)$ follows from the Poincar\'e duality.

\begin{proof}
 For any disc $\D_R$ with radius $R>0$, we compute directly
\[
    \lp (F_{v,c})_*[ \D_R ] , \phi \rp  
= \int_{\D_R} \,F_{v,c}^*\,\phi
=\sum_{k,\ell=1}^n \int_{\D_R}  \xi_{k,\ell} \,   \dd (v_k z) \wedge \dd\overline {(v_\ell z)}=-2i \pi R^2 \sum_{k,\ell=1}^n   v_k \,\xi_{k, \ell} \, \overline  v_\ell,
\]
\[
\Area_\omega\big( F_{v,c}( \D_R ) \big) =\int_{\D_R}   F_{v,c}^*\,\omega = \sum_{j=1}^n\int_{\D_R}  \frac{i}{2} \, \dd (v_jz) \wedge \dd\overline {(v_j z)} =\pi R^2\,\norm{\vec{v}}_{\C ^n}^2.
\]
Hence~\eqref{value-T-v} follows for Ahlfors currents. Similarly, 
we can show~\eqref{value-T-v}  for Nevanlinna currents.
\end{proof}

In the linear space of bidimension $(1, 1)$-currents on $\mathbb{T}$, let $\oC$ be the convex cone generated by $\{\oT_{[\vec{v}]}\}_{[\vec{v}]\in \mathbb{CP}^{n-1}}$.
Let $\oC_{1}\subset \oC$ be the subset consisting of mass $1$ currents with respect to $\omega$, i.e., $\oC_{1}:=\{T\in \oC : \lp T,\omega\rp=1\}$.
One can check that 
$\oT_{[\vec{v}]}\in \oC_{1}$ for every  
$[\vec{v}]\in \mathbb{CP}^{n-1}$.

\begin{proposition}
\label{key observation}
    Any Nevanlinna/Ahlfors current $\oT$ on $\mathbb{T}$ can be represented by an element
$T'$ in $\oC_1$ in the same cohomology class $[T']=[\oT]$. Moreover,  $T'$ can be chosen in the shape
\[
T'\,
=\,
\sum_{s=1}^n\,\beta_s\cdot \oT_{[\vec{v}_s]}
\]
for some
orthonormal basis $\{\vec{v}_{s}\}_{s=1}^n$ 
of $\mathbb{C}^n$
and  some non-negative coefficients $\beta_1, \dots, \beta_n$ with $\sum_{s=1}^n \beta_s=1$. Both $\{\vec{v}_{s}\}_{s=1}^n$ and $\{\beta_{s}\}_{s=1}^n$ depend on $\oT$.
\end{proposition}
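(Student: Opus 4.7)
The plan is to translate the problem into linear algebra via a natural dictionary between the real cohomology $H^{n-1,n-1}(\T,\R)$ and the space $\text{Herm}(n,\C)$ of $n\times n$ Hermitian matrices; the desired decomposition will then come from the spectral theorem applied to a single Hermitian matrix attached to $\oT$.

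First I associate to each real class $[\oT]\in H^{n-1,n-1}(\T,\R)$ the matrix
\[ M(\oT)_{k,\ell} \,:=\, i\,\lp \oT,\, dz_k\wedge d\bar z_\ell\rp, \qquad k,\ell=1,\dots,n, \]
which depends only on the cohomology class because the constant-coefficient forms $dz_k\wedge d\bar z_\ell$ are closed. The reality of $\oT$ gives the relation $\lp\oT, dz_k\wedge d\bar z_\ell\rp = -\overline{\lp\oT, dz_\ell\wedge d\bar z_k\rp}$, equivalent to $M(\oT)^* = M(\oT)$, so $M(\oT)\in \text{Herm}(n,\C)$. Plugging the basis form $\phi = dz_k\wedge d\bar z_\ell$ into the Observation's formula \eqref{value-T-v} yields the key identity
\[ M(\oT_{[\vec v]}) \,=\, 2\,\vec v\, \vec v^*\,/\,\|\vec v\|_{\C^n}^2, \]
twice the orthogonal projection onto the line $\C\vec v$. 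Next, pairing with $\omega = \tfrac{i}{2}\sum_j dz_j\wedge d\bar z_j$ gives $\lp\oT,\omega\rp = \tfrac{1}{2}\operatorname{tr}M(\oT)$, so mass-$1$ currents correspond precisely to matrices of trace $2$. For positivity, I would test $\oT$ against the non-negative decomposable $(1,1)$-forms $\phi_{\vec w} := i\,(\sum_k \bar w_k dz_k)\wedge(\sum_\ell w_\ell d\bar z_\ell)$ for $\vec w\in\C^n$: a direct expansion produces $\lp\oT,\phi_{\vec w}\rp = \vec w^*\,M(\oT)\,\vec w$, so positivity of the current is equivalent to positive semi-definiteness of $M(\oT)$.

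With the dictionary in place, given any Nevanlinna/Ahlfors current $\oT$ of mass $1$, its matrix $M(\oT)$ is positive semi-definite with trace $2$. The spectral theorem furnishes an orthonormal basis $\{\vec v_s\}_{s=1}^n$ of eigenvectors and non-negative eigenvalues $\lambda_s$ with $\sum_s\lambda_s = 2$; setting $\beta_s := \lambda_s/2$ gives $\sum_s\beta_s = 1$ and
\[ M(\oT) \,=\, \sum_{s=1}^n \beta_s\cdot 2\,\vec v_s\vec v_s^* \,=\, \sum_{s=1}^n \beta_s\, M(\oT_{[\vec v_s]}). \]
Since $\{dz_k\wedge d\bar z_\ell\}$ is a basis of $H^{1,1}(\T,\C)$, Poincar\'e duality makes $M$ injective on cohomology, whence $[\oT] = \big[\sum_s \beta_s\,\oT_{[\vec v_s]}\big]$, and $T' := \sum_s \beta_s\,\oT_{[\vec v_s]}$ lies in $\oC_1$ and represents $[\oT]$, as required. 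The one step that requires genuine thought is the equivalence between positivity of $\oT$ as a current and positive semi-definiteness of $M(\oT)$, which the choice of decomposable test forms $\phi_{\vec w}$ makes transparent; everything else is linear algebra.
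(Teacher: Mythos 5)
Your proof is correct and follows essentially the same strategy as the paper's: associate to $\oT$ a positive semi-definite Hermitian object built from pairings against constant-coefficient $(1,1)$-forms, apply the spectral theorem to extract an orthonormal basis and non-negative weights, and identify the result with a convex combination of the $\oT_{[\vec v_s]}$ via \eqref{value-T-v} and Poincar\'e duality. The paper works directly with the Hermitian sesquilinear form $\oB(\vec u,\vec w)=\lp\oT,\phi_{\vec u,\vec w}\rp$, whereas you package the same data as the matrix $M(\oT)$ and make the semi-positivity argument (pairing against $i\,\alpha\wedge\bar\alpha$) fully explicit; this is a cosmetic reformulation, not a different route. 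One small nit: you assert an \emph{equivalence} between positivity of $\oT$ and positive semi-definiteness of $M(\oT)$, but only the forward implication is established by testing against decomposable forms, and only that direction is used.
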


\begin{proof}
For any two vectors $\vec u:=(u_1,u_2\dots,u_n), \vec w:=(w_1,w_2,\dots,w_n)$ in $\C^n$, denote by $\phi_{\vec u, \vec w}$ the $(1,1)$-form on $\T$ induced by
$\frac{i}{2}  \,\sum_{k=1}^n u_k \dd z_k\wedge   \sum_{\ell=1}^n  \overline w_\ell \dd \overline z_\ell$ from $\C^n$.
 Consider the  
 bilinear form 
    $$\oB(\vec u,\vec w)\,
    :=\,
    \lp 
    \oT, 
    \phi_{\vec u, \vec w} \rp,$$
    which is clearly  semi-positive and Hermitian. Hence 
by the spectral theorem (cf. e.g.~\cite[p.~582, Theorem~6.4]{MR1878556}),
we can find some orthonormal basis
$\{\vec{v}_1, \dots, \vec{v}_n\}$
of $\mathbb{C}^{n}$, 
such that $\oB$ reads as
\[
\oB(\vec u,\vec w)\,
=\,
\sum_{s=1}^n\,
\beta_s\,
\lp \vec u, \vec{v}_s\rp 
\cdot
\overline{
\lp \vec w, \vec{v}_s\rp 
}
\qquad
(\forall\, \vec u, \,\vec w \,\in \,\mathbb{C}^n)
\]
for some non-negative numbers $\beta_1, \dots, \beta_n\geqslant 0$. 
Moreover, \eqref{value-T-v} verifies that
\[
\sum_{s=1}^n\,
\beta_s\,
\lp \vec u, \vec{v}_s\rp 
\cdot
\overline{
\lp \vec w, \vec{v}_s\rp 
}
\,=\,
\lp T', 
\phi_{\vec u, \vec w}
\rp
,
\]
where $T':=\sum_{s=1}^n\,\beta_s\cdot \oT_{[\vec{v}_s]}$.
Thus $\oT$ is cohomologous to $T'$ by Hodge theory.

Lastly,
noting that
that $\omega$ can be rewritten as $\sum_{s=1}^n \phi_{\vec v_s, \vec v_s}$,  and that
$\oT(\omega)=1$ by the definition of Nevanlinna/Ahlfors currents, using \eqref{value-T-v} again,
we receive
$$
1
=
\lp \oT, \omega\rp
=
\sum_{s=1}^n \lp \oT, \phi_{\vec v_s, \vec v_s}\rp =
\sum_{s=1}^n \beta_s. $$ 
Hence we conclude the proof.
\end{proof}

\smallskip
\noindent
{\bf Runge's approximation theorem.} 
{\it
Let $K$ be a compact  subset of $\C$ such that $\C\setminus K$ is connected. Then any  holomorphic function $f$ defined on a neighborhood of $K$ can be approximated uniformly on $K$ by a sequence of polynomials.
}
\smallskip

The following two consequences of Runge's approximation theorem will be used later.

\begin{lemma}\label{lem-exist}
	For every $x\in \partial \D_R$ and a small open set $U$ containing $x$, there exists an entire function $\varphi$ depending on $R, x, U$, such that $|\varphi(x)|>1$ and $|\varphi|<1$ on $\overline \D_R\setminus U$. 
\end{lemma}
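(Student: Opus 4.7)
The plan is a direct application of Runge's theorem to a two-component compact set. Set $K:=(\overline{\D}_R\setminus U)\cup\{x\}$; since $x\in U$ these two pieces are disjoint, and $K$ is compact. Choose small open neighborhoods $V_1\supset \overline{\D}_R\setminus U$ and $V_2\subset U$ of $x$ with $V_1\cap V_2=\emptyset$, and define
\[
f\,\equiv\,0 \quad \text{on } V_1, \qquad f\,\equiv\,2 \quad \text{on } V_2.
\]
This $f$ is holomorphic on the neighborhood $V_1\sqcup V_2$ of $K$.

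To invoke Runge, I would verify that $\C\setminus K$ is connected. Note that $\C\setminus(\overline{\D}_R\setminus U)=(\C\setminus\overline{\D}_R)\cup U$: since $x\in U\cap\partial\D_R$, the open set $U$ meets both the interior and the exterior of $\D_R$, so this union is connected. Removing the single point $x$ from this connected open subset of $\C$ still leaves a connected set (a point has real codimension $2$). Hence $\C\setminus K$ is connected, and Runge's theorem produces a polynomial $\varphi$ with $\sup_K|\varphi-f|<\tfrac12$. This forces $|\varphi(x)|>\tfrac32>1$ and $|\varphi|<\tfrac12<1$ on $\overline{\D}_R\setminus U$, giving the required entire function.

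The only subtle point is the connectedness verification; the geometric fact that $x\in\partial\D_R$ is exactly what glues the exterior of the disc to $U$ and prevents $K$ from enclosing a bounded component of $\C\setminus K$. Once that is secured, the construction is automatic, so I do not anticipate a substantive obstacle here beyond being careful with the choice of the disjoint neighborhoods $V_1, V_2$.
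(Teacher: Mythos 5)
Your proof is correct and is a genuinely different (and more self-contained) route than the one in the paper. The paper first invokes a ``Key Lemma'' from \cite{Xie-2023} to produce a holomorphic function $\widehat\varphi$ on a full neighborhood of $\overline\D_R$ with the desired size estimates, and only then runs Runge on the disc $K=\overline\D_R$ (whose complement is trivially connected) to replace $\widehat\varphi$ by a polynomial. You skip that intermediate step entirely: by working instead with the two-component compact set $K=(\overline\D_R\setminus U)\cup\{x\}$ and the locally constant seed $f=0$ near $\overline\D_R\setminus U$, $f=2$ near $x$, the estimates come for free from Runge, and the only real content is checking that $\C\setminus K$ is connected. Your connectedness argument is right: $\C\setminus(\overline\D_R\setminus U)=(\C\setminus\overline\D_R)\cup U$, the fact that $x\in\partial\D_R$ forces $U$ to reach into the exterior and glue the two pieces, and deleting the single point $x$ from a connected open subset of $\C$ preserves connectedness. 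One small hygiene point you leave implicit: the sentence ``the open set $U$ meets both the interior and the exterior of $\D_R$, so this union is connected'' tacitly uses that $U$ is connected; if $U$ is not, simply replace $U$ by the connected component containing $x$ (this shrinks $U$, so $\overline\D_R\setminus U$ only grows and the conclusion is stronger). With that remark added, the argument is complete, elementary, and avoids any external citation; the paper's version has the separate advantage of also recording (in the remark following the lemma) an explicit closed-form choice $\varphi(z)=\exp(z/x-1+\epsilon)$.
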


\begin{proof}
By~\cite[Key Lemma]{Xie-2023}, we can find some  holomorphic function $\widehat \varphi$ defined on a neighborhood of $\overline \D_R$ such that $|\widehat\varphi(x)|>1$ and $|\widehat\varphi|<1$ on $\overline \D_R\setminus U$. Now, Runge's approximation theorem  concludes the proof.
\end{proof}

\begin{remark}\rm
Here is an explicit elegant construction communicated to us by Fusheng Deng. For some sufficiently small  $\epsilon>0$, the entire function
$
\varphi(z):=
\exp(z/x-1+\epsilon)
$
satisfies the desired properties.
\end{remark}

\begin{lemma}\label{lem-disc-wp-pi}
Let $\{D_j\}_{1\leqslant j\leqslant s}$ be a family of disjoint close discs in $\C\setminus \overline\D_R$ for some $R>0$. Then for any entire function $f$, for any prescribed values $z_1, \dots, z_s\in \C$, 
there exists some entire function $g$, such that 
$|g-f|<\delta$ on $\overline\D_R$, and that 
$|g-z_j|<\delta$ 
on $D_j$ for every $j=1, \dots, s$.
\end{lemma}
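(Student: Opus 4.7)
\medskip
\noindent
\textbf{Proof proposal.} The plan is to reduce the statement to a direct application of Runge's approximation theorem (stated above) applied to a single well-chosen compact set carrying a single well-chosen holomorphic function.

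First, I would consider the compact set
\[
K \,:=\, \overline{\D}_R \,\sqcup\, D_1\,\sqcup\,\cdots\,\sqcup\, D_s \,\subset\, \C,
\]
which is a disjoint union of finitely many closed topological discs by hypothesis. A standard topological observation is that the complement of finitely many pairwise disjoint closed discs in $\C$ is connected: any two points of $\C\setminus K$ can be joined by a path going through a far-away region and then routed around each small disc. Hence $\C\setminus K$ is connected, which is precisely the hypothesis needed to invoke Runge's theorem on $K$.

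Next, on a small open neighborhood $U$ of $K$ that is itself a disjoint union of neighborhoods $U_0\supset \overline{\D}_R$ and $U_j\supset D_j$ (which exist since the $D_j$ are disjoint from each other and from $\overline{\D}_R$), I would define the piecewise holomorphic function
\[
h\,:\, U\,\longrightarrow\, \C,\qquad h\,:=\,f \text{ on } U_0,\qquad h\,:=\,z_j \text{ on } U_j \quad (1\leqslant j\leqslant s).
\]
Because $U$ has $s+1$ connected components and $h$ is holomorphic (indeed, constant on each $U_j$) on each of them, $h$ is a genuine holomorphic function on a neighborhood of $K$. Runge's theorem then supplies a polynomial $g$ with $\sup_K |g-h| < \delta$. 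Since polynomials are entire, and since $h$ restricts to $f$ on $\overline{\D}_R$ and to the constant $z_j$ on $D_j$, this $g$ is the desired entire function.

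There is essentially no obstacle: the only conceptual point one must not gloss over is verifying that $\C\setminus K$ is connected, which secures the hypothesis of Runge's theorem in its classical form. Once the target function $h$ is assembled on the several disjoint components of a neighborhood of $K$, polynomial (hence entire) approximation on $K$ delivers the statement immediately.
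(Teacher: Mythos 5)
Your proof is correct and follows exactly the same route as the paper: the paper's proof is the one-line instruction ``Apply Runge's approximation theorem to $K=\overline \D_R \cup\big(\cup_{j=1}^s D_j\big)$,'' and your write-up simply fills in the (routine but worth recording) details of why $\C\setminus K$ is connected and how to assemble the locally defined target function $h$.
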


\begin{proof} 
Apply Runge's approximation theorem to $K=\overline \D_R \bigcup\big(\cup_{j=1}^s D_j\big)$.
\end{proof}

\smallskip

From now on, we fix
\begin{equation}
    \label{fix an enumeration of C_1,n}
\text{a countable  subset 
$\{\oT_{\mathfrak m}\}_{\mathfrak m \geqslant  1}$ of\, $\mathcal{C}_{1}$},
\end{equation}
which generates, 
in the finite dimensional cohomology group $H^{n-1,n-1}(\T,\R)$, dense image with respect to that  of
$\mathcal{C}_{1}$.  We also fix a bijection $$(\sigma, \tau): 
\mathbb{Z}_+
\longrightarrow
\mathbb{Z}_+ \times \mathbb{Z}_+,$$ 
so that $\sigma: \mathbb{Z}_+\rightarrow  \mathbb{Z}_+$ is surjective and every fiber $\sigma^{-1}(\mathfrak m)$ contains infinitely many integers for $\mathfrak m\geqslant 1$. 
Our desired entire curve 
$\mathbf F=\Pi\circ \widehat {\mathbf F} : \mathbb{C}\rightarrow \mathbb{T}$ 
will be obtained by some lifting
$\widehat {\mathbf F}: \mathbb{C}\rightarrow \mathbb{C}^n$, which is the limit of a sequence of entire curves
$\widehat F_{t}: \mathbb{C}\rightarrow \mathbb{C}^n$ designed by the following sophisticated algorithm.

\medskip\noindent
{\bf Step $0$.} Set  $R_0:=2n$. Let  $\widehat F_0: \mathbb{C}\rightarrow \mathbb{C}^n$ be any entire curve. Choose  $\epsilon_0:=1$.

\medskip

\noindent In {\bf Step $t-1$} for $t\geqslant 1$, we record certain entire curve $\widehat F_{t-1}$ on $\C^n$, a large radius $R_{t-1}$ and a small error bound $\ep_{t-1}$.

\medskip\noindent
{\bf Step $t$}  for $t
\geqslant 1${\bf .}
Read $\mathfrak{m}=\sigma(t)$. 
By Proposition~\ref{key observation},
we can rewrite 
$\oT_{\mathfrak{m}}$ in~\eqref{fix an enumeration of C_1,n} as $\sum_{j=1}^n\,\beta_{t, j}\cdot \oT_{[\vec{v}_{t, j}]}$ for some non-negative constants $\beta_{t, 1}, \dots, \beta_{t, n}$ and
some orthonormal basis $\{\vec{v}_{t, s}\}_{s=1}^n$ 
of $\mathbb{C}^n$. 
Set $R_{t}:=2R_{t-1}$.  

\smallskip

We can sparsely select
$n$ distinct points $x_{t, 1}, \dots, x_{t, n}$ on $\partial \mathbb{D}_{R_t}$ such that the closed unit discs $ \{\overline{\mathbb{D}}(x_{t, j}, 1)\}_{j=1}^n$ are pairwise disjoint. 
By Lemma \ref{lem-exist}, for the point $x_{t,1}$ and its neighborhood $\D(x_{t,1}, 1)$, we can find some entire function $\varphi_{t, 1}$ such that 
\begin{equation}\label{condition-varphi-}
|\varphi_{t, 1}(x_{t, 1})|>1  \quad\text{and}\quad |\varphi_{t, 1}|<1 \quad \text{on}\quad \overline \D_{R_t}\setminus \D(x_{t, 1}, 1).
\end{equation}
Since  $x_{t, 1}, \dots, x_{t, n}$ are in the same circle around the origin, 
by composing $\varphi_{k, 1}$ with some rotations,
 we can also define $\varphi_{t, j}(z):=  \varphi_{t, 1} (e^{i\theta_{t, j}}\cdot z)$, for $j=2, \dots, n$, with likewise property
\[
|\varphi_{t, j}(x_{t, j})|>1  \quad\text{and}\quad |\varphi_{t, j}|<1 \quad \text{on}\quad \overline \D_{R_t}\setminus \D(x_{t, j}, 1).
\]
Here, each angle $\theta_{t, j}\in [0, 2\pi)$ is uniquely determined by
$e^{i\theta_{t, j}}\cdot x_{t, j}=x_{t, 1}$. 

\smallskip

Next, we define an entire curve
\begin{equation}\label{define F_k}
\widehat F_{t}\,:=\,
\widehat F_{t-1} 
+  
\sum_{j=1}^{n}\,{\sqrt{\beta_{t, j}} \over \alpha_t}\cdot\varphi_{t, j}  ^{\alpha_t}\cdot
\vec{v}_{t, j}
\end{equation}
 in $\C^n$, 
where  the integer exponent  $\alpha_t\gg 1$ is chosen to meet the following sophisticated requirements.

\begin{itemize}
    \item [\bf ($\spadesuit$).]
    The entire curve $F_t:=\Pi\circ \widehat F_t$ satisfies 
    the length-area estimate
    \begin{equation}
    \label{length-area conditions all}
  \frac{\Length_\omega (F_{t}(\D_{R_t}))}{\Area_\omega (F_{t}(\D_{R_t}))}\,
    <\,2^{-t} ,
  \quad
  \frac{\fL_{F_t}(R_t) }{\fA_{F_t}(R_t)}\,
  <\,
  2^{-t}.
    \end{equation}

\smallskip
    \item [\bf ($\heartsuit$).]
    The two normalized currents associated
    with the holomorphic disc $F_t\,\lvert_{\overline \D_{R_t}}$
    are near to the  Nevanlinna/Ahlfors current $\oT_{\mathfrak{m}}$ in the sense that, for all $1\leqslant k,\ell \leqslant n$,
\begin{equation}\label{approximate aimed  Nevanlinna current}
\Big|
\frac{1}{\fA_{F_t}(R_t)}\,
\int_{0}^{R_t}\,
\frac{\dd r}{r}\,
\int_{\D_r}\,
{} F_t^*(\dd z_k \wedge \dd \bar{z}_\ell)\,
-\,
\lp \oT_{\mathfrak{m}},
\dd z_k \wedge \dd \bar{z}_\ell\rp
\Big|
<2^{-t},
 \end{equation}
 \begin{equation}  \label{approximate aimed Ahlfors  current}
\Big|
\frac{1}{\Area_\omega(F_t(\D_{R_t}))} \int_{\D_{R_t}}\, {} F_t^*(\dd z_k \wedge \dd \bar{z}_\ell)\,
-\,
\lp \oT_{\mathfrak{m}},
\dd z_k \wedge \dd\bar{z}_\ell \rp
\Big|\,
<\,
2^{-t}.
\end{equation}

\smallskip
\item [\bf ($\clubsuit$).]

     On $\D_{R_{t-1}+1}$, $\widehat F_t$  is very near to $\widehat F_{t-1}$, i.e., 
    \begin{equation*}
        \label{F_t is small on small disc}\|\widehat F_t (z)-\widehat F_{t-1} (z)\|_{\mathbb{C}^n}< 2^{-t}\cdot \epsilon_{t-1}   \quad\text{for }\, z\in \D_{R_{t-1}+1}.
    \end{equation*}
\end{itemize}

\smallskip

The main technical difficulty in our construction is
to show the existence of 
 $\alpha_t\gg 1$ satisfying the conditions $(\spadesuit), (\heartsuit), (\clubsuit)$.  This will be reached in Proposition~\ref{key technical difficulty} eventually. For the moment, assuming the existence, we can finish the {\bf Step} $t$ by selecting a small positive error bound $\epsilon_t<\epsilon_{t-1}/2$,  such that, if  $\widehat{\mathbf F}$ is some mild holomorphic perturbation of $\widehat F_t$ with $|\widehat{\mathbf F}-\widehat F_t|< \epsilon_{t}$ on $\D_{R_t+1}$, 
 after replacing $F_t$ by the new entire curve   $\mathbf {F}:=\Pi\circ \widehat{\mathbf F}$ on the left-hand-sides and enlarging $2^{-t}$ to $2^{-t+1}$ on the right-hand-sides,
\begin{itemize}
    \smallskip
\item [\bf ($\diamondsuit$).] 
all the corresponding inequalities~\eqref{length-area conditions all},~\eqref{approximate aimed Nevanlinna  current},~\eqref{approximate aimed Ahlfors  current}
maintain. 
\end{itemize}

\smallskip
The existence of such $\epsilon_t$ can be proved by a {\em reductio ad absurdum} argument, since the $\Cc^0$-convergence of a sequence of holomorphic functions
on a larger open set $\mathbb{D}_{R_t+1}\supset \overline{\mathbb{D}}_{R_t}$
can guarantee the $\Cc^1$-convergence on the smaller compact set $\overline{\mathbb{D}}_{R_t}$.

\smallskip

\begin{proof}[Proof of Theorem \ref{thm-class}]
The condition~$(\clubsuit)$ guarantees that  $\lim_{t\rightarrow \infty} \widehat F_t$ exists, which denoted  by $\widehat {\mathbf F}: \mathbb{C}\rightarrow \mathbb{C}^n$.
By our construction, on each $\mathbb{D}_{R_t+1}$ for $t\geqslant 1$, there holds
\[
|\widehat {\mathbf F}-\widehat F_t|\leqslant
\sum_{j\geqslant t}\,
|\widehat F_{j+1}-\widehat F_j|
<
\sum_{j\geqslant t}\,
\frac{\epsilon_j}{2^{j+1}}
<
\sum_{j\geqslant t}\,
\frac{\epsilon_t}{2^{j+1}}
<
\epsilon_t.
\]
Thus the condition~$(\diamondsuit)$
ensures that the entire curve $\mathbf F=\Pi\circ \widehat {\mathbf F} : \mathbb{C}\rightarrow \mathbb{T}$ produces concentric holomorphic discs $\{\mathbf F\,\lvert_{\overline \D_{R_t}}\}_{t\geqslant 1}$ satisfying the length-area conditions for obtaining Nevanlinna/Ahlfors currents, and that for every $t \geqslant 1$, \eqref{approximate aimed Nevanlinna current} and \eqref{approximate aimed   Ahlfors  current} hold after replacing $F_t$ by $\mathbf F$.
Therefore, for any $\oT_{\mathfrak m}$ in~\eqref{fix an enumeration of C_1,n}, the sequence of infinite holomorphic discs
$\{\mathbf F\,\lvert_{\overline \D_{R_t}}\}_{t\in \sigma^{-1}(\mathfrak m)}$
can generate, after passing to certain subsequence, some Nevanlinna and Ahlfors currents 
cohomologous to $\oT_{\mathfrak m}$. 
Lastly, by the  density of
$\{\oT_{\mathfrak m}\}_{\mathfrak m \geqslant  1}$ in  $\mathcal{C}_{1}$
as coholomogy classes in $H^{n-1,n-1}(\T,\R)$, we conclude the proof. 
\end{proof}

The remaining task of this section is  to show 

\begin{proposition}\label{key technical difficulty} 
In each Step $t\geqslant 1$, one can find some large
  integer exponent  $\alpha_t\gg 1$,  such that the requirements  
 $(\spadesuit), (\heartsuit), (\clubsuit)$ hold true.
\end{proposition}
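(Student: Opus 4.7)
The plan is to verify the three requirements $(\spadesuit), (\heartsuit), (\clubsuit)$ for all $\alpha_t$ exceeding certain thresholds (depending on $t$), by exploiting two structural features of the construction: the orthonormality $\langle \vec v_{t,j_1}, \vec v_{t,j_2}\rangle_{\C^n} = \delta_{j_1 j_2}$ and the rotational symmetry $\varphi_{t,j}(z) = \varphi_{t,1}(e^{i\theta_{t,j}} z)$.

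The easiest is $(\clubsuit)$. By the sparse placement of $\{x_{t,j}\}_{j=1}^n \subset \partial \D_{R_t}$ and $R_t = 2R_{t-1}$, the closed disc $\overline\D_{R_{t-1}+1}$ lies inside $\overline\D_{R_t}\setminus \bigcup_j \D(x_{t,j}, 1)$. By compactness there is $M_t < 1$ with $|\varphi_{t,j}| \leqslant M_t$ on $\overline\D_{R_{t-1}+1}$ uniformly in $j$. Hence $\|\widehat F_t - \widehat F_{t-1}\|_{\C^n} \leqslant \sum_j \sqrt{\beta_{t,j}}\, M_t^{\alpha_t}/\alpha_t \to 0$, so $(\clubsuit)$ holds once $\alpha_t$ is large.

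The core is $(\heartsuit)$. Writing $\widehat F_t' = \widehat F_{t-1}' + \sum_j \sqrt{\beta_{t,j}}\, \varphi_{t,j}^{\alpha_t - 1}\varphi_{t,j}' \vec v_{t,j}$, the quadratic ``new'' piece of $\sum_k |F_{t,k}'|^2$ collapses via orthonormality to $\sum_j \beta_{t,j} |\varphi_{t,j}|^{2\alpha_t - 2}|\varphi_{t,j}'|^2$, and by rotational symmetry each integral $\int_{\D_{R_t}} |\varphi_{t,j}|^{2\alpha_t - 2}|\varphi_{t,j}'|^2 \, \dd A$ equals the common quantity $A_{\alpha_t} := \int_{\D_{R_t}} |\varphi_{t,1}|^{2\alpha_t - 2}|\varphi_{t,1}'|^2 \, \dd A$. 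Since $|\varphi_{t,1}(x_{t,1})| > 1$, a standard Laplace-type estimate around the boundary peak $x_{t,1}$ yields $A_{\alpha_t} \gtrsim c_t\, |\varphi_{t,1}(x_{t,1})|^{2\alpha_t}/\alpha_t^{3/2} \to \infty$ exponentially. This dominates: the $|F_{t-1}'|^2$ contribution is $O(1)$, and the $F_{t-1}' \times (\text{new})$ cross terms are $O(\sqrt{A_{\alpha_t}})$ by Cauchy--Schwarz, both $o(A_{\alpha_t})$. So $\Area_\omega(F_t(\D_{R_t})) = A_{\alpha_t}(1+o(1))$. For the numerator $\int F_{t,k}' \overline{F_{t,\ell}'}\, \dd A$ (not summed over $k$), the diagonal part contributes $A_{\alpha_t}\sum_j \beta_{t,j}\, v_{t,j,k}\, \overline{v_{t,j,\ell}}$; for the off-diagonal $j_1 \neq j_2$ terms, since $\sup_{\overline\D_{R_t}} |\varphi_{t,j_1}\varphi_{t,j_2}| \leqslant M \cdot |\varphi_{t,1}(x_{t,1})|$ for some $M < 1$ (inside each $\D(x_{t,j_\nu}, 1)$ the other factor is bounded by $M<1$, and outside both small discs both factors are), these integrals are $O\big((M\,|\varphi_{t,1}(x_{t,1})|)^{\alpha_t - 1}\big) = o(A_{\alpha_t})$. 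Dividing, the Ahlfors ratio converges to $-2i \sum_j \beta_{t,j}\, v_{t,j,k}\, \overline{v_{t,j,\ell}}$, which by \eqref{value-T-v} and $\oT_{\mathfrak m} = \sum_j \beta_{t,j}\, \oT_{[\vec v_{t,j}]}$ equals $\langle \oT_{\mathfrak m}, \dd z_k \wedge \dd \bar z_\ell\rangle$. The Nevanlinna ratio is handled identically after inserting the weight $\log(R_t/|z|)$ (from Fubini) into both numerator and denominator.

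Condition $(\spadesuit)$ follows from the same estimates. By Cauchy--Schwarz, $\Length_\omega(F_t(\partial \D_{R_t}))^2 \leqslant 2\pi R_t \int_{\partial \D_{R_t}} \sum_k |F_{t,k}'|^2 \, \dd s$; orthonormality reduces the dominant integrand to $\sum_j \beta_{t,j} |\varphi_{t,j}|^{2\alpha_t - 2}|\varphi_{t,j}'|^2$, whose integral over $\partial \D_{R_t}$ is $O(|\varphi_{t,1}(x_{t,1})|^{2\alpha_t}/\sqrt{\alpha_t})$ by Laplace in one dimension. Combined with the above area lower bound, the length/area ratio is $O\big(R_t^{1/2}\, \alpha_t^{5/4}\, |\varphi_{t,1}(x_{t,1})|^{-\alpha_t}\big) \to 0$; the Nevanlinna length-area ratio is analogous. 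The principal obstacle is the off-diagonal control in the $(\heartsuit)$ numerator: unlike the denominator, no identity collapses the double sum, so we must rely on the geometric separation of $\{x_{t,j}\}$ together with $|\varphi_{t,j}| < 1$ off $\D(x_{t,j}, 1)$ to ensure $\sup|\varphi_{t,j_1}\varphi_{t,j_2}|$ is strictly below $\sup|\varphi_{t,j}|^2$, yielding exponential decay relative to the diagonal. Taking $\alpha_t$ larger than the finitely many thresholds produced above simultaneously secures $(\spadesuit), (\heartsuit), (\clubsuit)$, finishing the proof.
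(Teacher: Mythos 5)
Your argument follows essentially the same route as the paper: orthonormality of $\{\vec v_{t,j}\}$ collapses the quadratic diagonal, rotational symmetry makes the individual integrals $\int |\varphi_{t,j}|^{2\alpha_t-2}|\varphi_{t,j}'|^2$ equal, the off-diagonal $j_1\neq j_2$ terms are killed by the geometric separation of the $x_{t,j}$'s (exactly the paper's observation that $|\varphi_{t,j}\varphi_{t,s}|^{\alpha_t-1}\leqslant m^{\alpha_t-1}$ for $j\neq s$), and the length/area ratio vanishes via an exponential gap. Two small local choices differ: you bound the cross terms $\Theta_3$ by Cauchy--Schwarz where the paper uses a pointwise $O(M m^{\alpha_t-1}m')$ bound, and you note that orthonormality also annihilates the off-diagonal part in the \emph{denominator}, a minor simplification the paper does not exploit (it bounds the off-diagonal the same way in numerator and denominator).

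The one step you should tighten is the area lower bound. You invoke a ``standard Laplace-type estimate around the boundary peak $x_{t,1}$'' to get $A_{\alpha_t}\gtrsim |\varphi_{t,1}(x_{t,1})|^{2\alpha_t}/\alpha_t^{3/2}$, but that prefactor presupposes a nondegenerate interior maximum with $\varphi_{t,1}'$ nonvanishing there; here the maximum of $|\varphi_{t,1}|$ sits on $\partial\D_{R_t}$, need not be at $x_{t,1}$ itself, and $\varphi_{t,1}'$ could well vanish at that maximum, changing the polynomial order. None of this is fatal --- you only need exponential growth with base exceeding $m_0 m$, which holds regardless --- but the claim as stated is unjustified. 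The paper circumvents this entirely with a compactness argument: pick a fixed closed ball $B_{t,j}$ inside $\D_{R_t}\cap\{m^{3/4}<|\varphi_{t,j}|<m\}\cap\{\varphi_{t,j}'\neq 0\}$ (nonempty open), so that on $B_{t,j}$ one has $|\varphi_{t,j}|>m^{3/4}$ and $|\varphi_{t,j}'|\geqslant c_{t,j}>0$, giving at once $\Area\gtrsim(m^{3/2})^{\alpha_t-1}$ with no Laplace asymptotics or nondegeneracy hypothesis. Replacing your Laplace claim by this fixed-ball bound makes your proof fully rigorous and matches the paper's.
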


We fix some notation first.
In coordinate systems, write $\widehat F_t=(\widehat f_{t,1},\dots, \widehat f_{t,n})$, 
$\vec v_{t,j} =(v_{t,j,1}, \dots,v_{t,j,n})$
 for every $1\leqslant j\leqslant n$. 
Put
$$\Phi_{t,s}:=\sum_{j=1}^n {\sqrt{\beta_{t, j}}\over \alpha_t}\cdot\varphi_{t, j}  ^{\alpha_t}\cdot v_{t,j,s},
\qquad 1\,\leqslant\, s\,\leqslant \,n,$$
so that $\widehat f_{t,s}=\widehat f_{t-1,s}+\Phi_{t,s}$. 
Set $M:= \max_{1\leqslant j\leqslant n}\max_{\overline \D_{R_t}} |\widehat f'_{t-1,j}|\geqslant 0$, and
$$
m:=\max_{\overline \D( x_{t,1},  1)\cap \overline\D_ {R_t}} |\varphi_{t,1}|>1,
\quad 
m':=\max_{\overline\D_{R_t}}|\varphi'_{t,1}|>0, 
\quad m_0:= \max_{\overline\D_ {R_t}\setminus \D( x_{t,1}, 1)}|\varphi_{t,1}|<1.$$
Here, we omit the indices $t$ on the left-hand-sides for convenience. 
Since every $\varphi_{t,j}$ for $j=2, \dots, n$ is obtained by composing  $\varphi_{t,1}$ with some rotation, we receive
$$
\max_{\overline\D(x_{t,j}, 1)\cap \overline\D_ {R_t}}|\varphi_{t,j}|=m,
\qquad
\max_{\overline\D_{R_t}} |\varphi'_{t,j}|=m', \qquad
\max_{\overline\D_{R_t}\setminus \D(x_{t,j}, 1)}|\varphi_{t,j}|=m_0.
$$ 

\smallskip

\begin{proof}[Proof of the first inequality of~\eqref{length-area conditions all}]
We can compute 
$$\Length_{\omega} \big(F_t( \partial \D_{R_t}) \big) =\int_{\partial \D_{R_t}}
 \big|\widehat F_t^*\omega \big|^{1/2} \quad
\text{and}\quad 
\Area_{\omega}\big(F_t(\D_{R_t})\big) =\int_{\D_{R_t}}\widehat F_t^*\omega, $$
where 
$$\widehat F_t^*\omega=\sum_{s=1}^n {i\over 2} \, \dd \widehat f_{t,s}(z)  \wedge \dd \overline { \widehat f_{t,s}(z)} =  \sum_{s=1}^n  \big|\widehat f_{t,s}'(z)\big|^2
\cdot \frac{i}{2}\, \dd z\wedge \dd \overline z.$$

To get an upper bound for $\Length_{\omega} \big(F_t( \partial \D_{R_t}) \big)$, we need to bound every
\[
|\widehat f_{t,s}'|
=
|\widehat f'_{t-1,s}+\Phi'_{t,s}|
\leqslant M
+
|\Phi'_{t,s}|,
\qquad
 1\leqslant s\leqslant n
\]
 on $\overline \D_{R_t}$. Note that (recall $\norm{v_{t, j}}_{\mathbb{C}^n}=1$)
\begin{equation}\label{computation-Phi-t-s}
|\Phi'_{t,s}| \leqslant \sum_{j=1}^n \sqrt{\beta_{t, j}} \cdot |\varphi_{t,j}|^{\alpha_t-1}\cdot|\varphi'_{t,j}| \cdot |v_{t,j,s}| \leqslant  \sum_{j=1}^n \sqrt{\beta_{t, j}}\cdot  m^{\alpha_t-1}\cdot m'\cdot 1.
\end{equation}
Hence we receive 
\begin{equation} 
\label{length-upper-bound-class}
\Length_{\omega} \big(F_t( \partial \D_{R_t}) \big)
<
\int_{\partial \D_{R_t}}
\Big(
M+
\sum_{j=1}^n  \sqrt{\beta_{t, j}}\cdot m^{\alpha_t-1}m' 
\Big)\, |\dd z| 
\lesssim 
R_t \cdot  m^{\alpha_t-1} m'.
\end{equation}
Here, the implicit multiplicative  does not depend on $\alpha_t$.

Now we estimate  $\Area_{\omega}\big(F_t(\D_{R_t})\big)$. In each non-empty open set 
$$\D_{R_t} \cap \big\{1< m^{3/4} < |\varphi_{t, j}| <m   \big\}\cap \big\{\varphi'_{t, j}\neq 0 \big\},
\qquad
j= 1,\,\dots,\, n,$$
we 
take a closed ball $B_{t, j}$. Clearly, $B_{t, j}$ is contained in $\D(x_{t, j}, 1)$. By the compactness of $B_{t, j}$, we can find  some $c_{t,  j}>0$ such that $|\varphi'_{t, j}|\geqslant c_{t, j}$ on $B_{t, j}$. By triangle inequality, for every $z\in B_{t, j}$, for sufficiently large $\alpha_t\gg 1$, we have
\begin{align}
\nonumber
|\Phi'_{t,s}(z)|
&\geqslant
\sqrt {\beta_{t, j}}\cdot
|\varphi_{t, j} | ^{\alpha_t-1}  |\varphi'_{t, j}| \cdot |v_{t,j,s}|-  \sum_{k=1, k\neq j}^{n}  \Big(
\sqrt {\beta_{t, k}}\cdot
|\varphi_{t, k} | ^{\alpha_t-1} |\varphi'_{t, k}| \cdot |v_{t, k, s}| \Big)
\\
\label{first time estimate of Phi}
&\geqslant
\sqrt {\beta_{t, j}}\cdot\,
( m^{3/4} )^{\alpha_t-1}\, c_{t, j} \cdot |v_{t,j,s}|  -  1,
\end{align}
since $|\varphi_{t,k}|\leqslant m_0<1$ on $B_{t, j}\subset\D(x_{t,j}, 1)\subset \overline\D_{R_t}\setminus \D(x_{t,k}, 1)$ for $k\neq j$.

Recall that $\sum_{j=1}^n \beta_{t, j}=1$ and that $\sum_{s=1}^n |v_{t,j,s}|^2=1$ for every $j=1, \dots, n$. We can take  some $j_0$ and $s_0$  with ${\beta_{t, j_0}}\geqslant 1/n$ and $|v_{t,j_0, s_0}|\geqslant 1/\sqrt n$. Hence for $z\in B_{t, j_0}$ and for  $\alpha_t \gg 1$,
we can continue to estimate~\eqref{first time estimate of Phi}
as 
$
|\Phi'_{t,s_0}(z)|
\geqslant
(2n)^{-1} ( m^{3/4} )^{\alpha_t-1}\, c_{t, j_0}
$
since $m>1$.  Thus we get
$$|\widehat f'_{t, s_0}|
=
|\widehat f'_{t-1,s_0}+\Phi'_{t,s_0}|
\geqslant |\Phi'_{t, s_0}| -M\geqslant (4n)^{-1} ( m^{3/4} )^{\alpha_t-1}\, c_{t, j_0}$$
on $B_{t,j_0}$, again assuming  $\alpha_t \gg 1$. 
Consequently,
\begin{equation}\label{area-lower-bound-class}
\Area_{\omega}\big(F_t(\D_{R_t})\big)\geqslant \Area_{\omega}\big(F_t(B_{t,j_0})\big) \geqslant 
\int_{B_{t,j_0}} \big| \widehat f'_{t,s_0}(z) \big|^2\, \frac{i}{2}\,\dd z\wedge \dd \overline z \gtrsim (m^{3/2} )^{\alpha_t-1}.
\end{equation}
Here, the implicit multiplicative depends on $B_{t, j_0}$, and does not depend on $\alpha_t$.

By~\eqref{length-upper-bound-class} and~\eqref{area-lower-bound-class}, we conclude that 
 $$ \frac{\Length_\omega (F_{t}(\D_{R_t}))}{\Area_\omega (F_{t}(\D_{R_t}))} 
 \lesssim
 \frac{R_t \cdot  m^{\alpha_t-1} m'}{(m^{3/2} )^{\alpha_t-1}}
=
 \frac{R_t\cdot m'}{m^{\alpha_t/2 -1/2}}.  $$
 Letting $\alpha_t\gg 1$, we finish the proof.
\end{proof}

\smallskip

\begin{proof}[Proof of the second inequality of \eqref{length-area conditions all}]
Recall the definition
$$
\fL_{F_t}(R_t) := \int_0^{R_t}\Length_{\omega} \big( F_t(\partial \D_r)\big)\,\frac{\dd r}{ r}\quad\text{and}\quad  
\fA_{F_t}(R_t) := \int_0 ^{R_t}  \Area_\omega \big(F_{t}(\D_r)\big)
\,\frac{\dd r}{ r}.$$ 
By the same argument as~\eqref{length-upper-bound-class}, for every $r\leqslant R_t$, we can show that
\[
\Length_{\omega} \big( F_t(\partial \D_r)\big) \lesssim r\cdot m^{\alpha_t -1} m',
\]
where the implicit multiplicative is independent of  $\alpha_t$.
Hence
$$\fL_{F_t}(R_t) \lesssim   \int_0^{R_t} r\cdot m^{\alpha_t -1} m'\,\frac{\dd r}{ r} \leqslant  R_t\cdot m^{\alpha_t -1} m'.  $$

To bound $\fA_{F_t}(R_t)$ from below, we consider 
$R_t^-:=\max\big\{ |z|:\,  z\in B_{t,j_0} \big\}<R_t$.  
For all $ r\in [R_t^-, R_t]$, \eqref{area-lower-bound-class} gives 
$ \Area_\omega \big(F_{t}(\D_r)\big)\gtrsim (m^{3/2})^{\alpha_t -1}$.
Hence 
$$\fA_{F_t}(R_t) \geqslant \int_{R_t^-}^{R_t}  \Area_\omega \big(F_{t}(\D_r)\big)
\,\frac{\dd r}{ r} \gtrsim \int_{R_t^-}^{R_t} (m^{3/2})^{\alpha_t -1} \,\frac{\dd r}{ r} \gtrsim (m^{3/2})^{\alpha_t -1},$$
where the implicit multiplicative is independent of $\alpha_t$.

Thus
we conclude that 
$$\frac{\fL_{F_t}(R_t) }{\fA_{F_t}(R_t)}
\lesssim
 \frac{R_t \cdot  m^{\alpha_t-1} m'}{(m^{3/2} )^{\alpha_t-1}}
=
 \frac{R_t\cdot m'}{m^{\alpha_t/2 -1/2}}.  $$
 By taking sufficiently large  $\alpha_t\gg 1$, we finish the proof.
\end{proof}

\smallskip

\begin{proof}[Proof of \eqref{approximate aimed Ahlfors  current}]
Fix $k, \ell\in \{1, \dots, n\}$.
By \eqref{value-T-v}, we have
$$\lp \oT_{\mathfrak{m}},
\dd z_k \wedge \dd\bar{z}_\ell \rp= \sum_{j=1}^n \beta_{t, j}\cdot \lp \oT_{[\vec v_{t,j}]},  \dd z_k \wedge \dd\bar{z}_\ell\rp    =  -2i \sum_{j=1}^n  \beta_{t, j}\cdot 
v_{t,j,k}\overline v_{t,j ,\ell}.$$ 
Now we compute
$$\widehat F_t^*(\dd z_k \wedge \dd \overline z_\ell)=\dd \widehat f_{t,k}(z) \wedge \dd \overline {\widehat f_{t,\ell} (z) }    =\widehat f_{t,k}'(z)\overline {\widehat f_{t,\ell}'(z)}   \, \dd z \wedge \dd \overline z.$$
Decompose
$$\widehat f_{t,k}'(z)\cdot \overline {\widehat f_{t,\ell}'(z)}
=
(\widehat f'_{t-1, k}+\Phi'_{t, k})
\cdot \overline{
(\widehat f'_{t-1, \ell}+\Phi'_{t, \ell})
}
=
\Theta_1(z) +\Theta_2(z)+\Theta_3(z),$$
where the dominant term is
\begin{align*}
\Theta_1(z)
:=
\Phi'_{t,k}(z) \cdot \overline {\Phi'_{t,\ell}(z)}=
\Big( \sum_{j=1}^n \sqrt{\beta_{t, j}}\cdot\varphi_{t, j}  ^{\alpha_t-1}  \varphi'_{t, j}   \cdot v_{t,j,k} \Big) \cdot \Big( \sum_{s=1}^n \sqrt{\beta_{t, s}}\cdot \overline \varphi_{t, s}  ^{\alpha_t-1} \overline{\varphi'_{t, s}}   \cdot \overline v_{t,s,\ell}   \Big),
\end{align*}
and where the remaining terms are
$$\Theta_2(z):=\widehat f_{t-1,k}'(z)\cdot \overline{\widehat f'_{t-1,\ell}(z)},\quad \Theta_3(z):=\widehat f_{t-1,k}'(z)\cdot \overline {\Phi'_{t,\ell}(z)}+ \Phi'_{t,k}(z) \cdot \overline{\widehat f'_{t-1,\ell}(z)}.$$
By our construction, if $j\neq s$, then $|\varphi_{t,j}|^{\alpha_t-1}|\varphi_{t,s}|^{\alpha_t-1}\leqslant m^{\alpha_t-1}$ on $\D_{R_t}$. Noting that $|\beta_{t,j}|\leqslant 1, |v_{t,j,k}|\leqslant 1$, we have
$$ \Theta_1 =   \sum_{j=1}^n \beta_{t, j} \cdot v_{t,j,k} \overline v_{t,j,\ell}\cdot |\varphi_{t,j}|^{2\alpha_t -2}|\varphi_{t,j}'|^2  
+
O(m^{\alpha_t -1}m'^2).$$
By the same reasoning,
we have
$
|\Theta_2|=O(M^2),
|\Theta_3|= O(M m^{\alpha_t -1}m').
$
Denote
$$\Ac_{t,j}:=\int_{ \D_{R_t} }  |\varphi_{t,j}|^{2\alpha_t -2}|\varphi_{t,j}'|^2 (z) \, \dd z \wedge \dd \bar z ,
\qquad
j= 1,\,\dots,\, n.
$$
Observe that each $\Ac_{t,j}=\Ac_{t,1}$, 
since $\varphi_{t, j}$
is obtained by composing $\varphi_{t, 1}$ with some rotation.

Recall~\eqref{area-lower-bound-class} that $ \Area_\omega \big(F_{t}(\D_{R_t})\big)\gtrsim (m^{3/2})^{\alpha_t -1}$. By comparing the exponents of $m$, we obtain that
\[
\frac{1}{\Area_\omega(F_t(\D_{R_t}))}  \Big|
\int_{\D_{R_t}}\, \widehat F_t^*(\dd z_k \wedge \dd \bar{z}_\ell)   - \sum_{j=1}^n \beta_{t, j} \cdot v_{t,j,k} \cdot \overline v_{t,j,\ell}\cdot    \Ac_{t,j} \Big|
\lesssim \,
\frac{ m^{\alpha_t -1}}{(m^{3/2})^{\alpha_t -1}},
\]
where the implicit multiplicative is independent of $\alpha_t$.
  Using that all $\Ac_{t,j}$'s are equal, we receive that
\begin{equation}\label{F-t-A-t-1}
\frac{1}{\Area_\omega(F_t(\D_{R_t}))}  \Big|
\int_{\D_{R_t}}\, \widehat F_t^*(\dd z_k \wedge \dd \bar{z}_\ell)   - \Ac_{t,1} 
 \sum_{j=1}^n \beta_{t, j} \cdot v_{t,j,k}\cdot \overline v_{t,j,\ell}   \Big|\longrightarrow 0 
 \end{equation}
as $\alpha_t$ tends to infinity. Hence to prove \eqref{approximate aimed Ahlfors  current}, it remains to show  that 
\begin{equation}\label{A-t-1-Area}
\Ac_{t,1} /\Area_\omega\big(F_t(\D_{R_t})\big)=-2i +o(1) \quad\text{as} \quad \alpha_t\to \infty.
\end{equation}

By letting $k=\ell$ running through $1, \dots, n$, we deduce from \eqref{F-t-A-t-1} that
$$\frac{1}{\Area_\omega(F_t(\D_{R_t}))}  \Big|
\int_{\D_{R_t}}\, \widehat F_t^*\Big(  \frac{i}{2}\sum_{j=1}^n \dd z_j\wedge \dd \bar{z}_j  \Big)  - \frac{i}{2} \Ac_{t,1} 
 \sum_{j=1}^n  \beta_{t, j} \cdot \sum_{k=1}^n |v_{t,j,k}|^2   \Big|\longrightarrow 0.   $$
 Equivalently, 
$$\frac{1}{\Area_\omega(F_t(\D_{R_t}))}  \Big|
 \Area_\omega\big(F_t(\D_{R_t})\big)  - \frac{i}{2}\Ac_{t,1} 
 \sum_{j=1}^n  \beta_{t, j} \cdot  \norm{\vec v_{t,j}}_{\C^n}^2   \Big|\longrightarrow 0.   $$
Thus~\eqref{A-t-1-Area} follows from $\sum_{j=1}^n \beta_{t, j}=1$ and $\norm{\vec v_{t,j}}_{\C^n}=1$. This concludes the proof.
\end{proof}

\smallskip

\begin{proof} [Proof of \eqref{approximate aimed  Nevanlinna current}]
For every $j=1,\dots,n$, we define 
$$\Bc_{t,j}:= \int_{0}^{R_t}\,
\frac{\dd r}{r}\,
\int_{\D_r}\,
|\varphi_{t,j}|^{2\alpha_t -2}|\varphi_{t,j}'|^2 \,\dd z \wedge \dd \bar{z}= \int_{0}^{R_t}\,
\frac{\dd r}{r}\,
\int_{\D_r}\,
|\varphi_{t,1}|^{2\alpha_t -2}|\varphi_{t,1}'|^2 \,\dd z \wedge \dd \bar{z}. $$
Repeating the same argument, by comparing the exponents of $m$, for $\alpha_t \to \infty$,  we have
$$
\frac{1}{\fA_{F_t}(R_t)}\Big| 
\int_{0}^{R_t}\,
\frac{\dd r}{r}\,
\int_{\D_r}\,
\widehat F_t^*(\dd z_k \wedge \dd \bar{z}_\ell)\,
-\,
\Bc_{t,1} 
 \sum_{j=1}^n \beta_{t, j} \cdot v_{t,j,k} \cdot \overline v_{t,j,\ell}  
\Big|
\longrightarrow 0. 
$$
Now we only need to show that $\Bc_{t,1}/\fA_{F_t}(R_t)=-2i+o(1)$ as $\alpha_t\to \infty$. This  can be done by the same argument as  in the preceding proof. 
\end{proof}

\smallskip

\begin{proof}[Proof of Proposition~\ref{key technical difficulty}]
Summarizing,  $(\spadesuit)$ and $(\heartsuit)$ 
are proved already. 
The requirement
$(\clubsuit)$ is easily satisfied, since
 on $\D_{R_{t-1}+1}$, by letting $\alpha_t$ tend to infinity, we have
    \[
    \|\widehat F_t-\widehat F_{t-1}\|_{\mathbb{C}^n}
    =
    \Big\|\sum_{j=1}^{n}\,{\sqrt{\beta_{t, j}} \over \alpha_t}\cdot\varphi_{t, j}  ^{\alpha_t}\cdot
\vec{v}_{t, j} \Big\|_{\mathbb{C}^n}\,
\longrightarrow \,
0
\]
as  $|\varphi_{t, j}|\leqslant m_0<1$.
\end{proof}

\medskip

\section{\bf Exotic entire curves producing all shapes of Nevanlinna currents} \label{Sect. 3}

\subsection{An algorithm for constructing exotic entire curves}
Let  $\pi: \C\rightarrow E=\C/\Gamma$ be the canonical projection. 
 The \textit{Weierstrass $\wp$-function} associated with $\Gamma$ is given by 
 $$\wp (z):= {1\over z^2} +\sum_{\lambda \in \Gamma \setminus \{0\}} \Big( {1\over (z-\lambda)^2}-{1\over \lambda ^2}  \Big).$$
It is  meromorphic on $\C$, doubly periodic with respect to $\Gamma$. Hence it induces a holomorphic map, still denoted by $\wp$, from $\C$ to $\C\P^1$, and a holomorphic map $\widetilde\wp: E\rightarrow \C\P^1$ with $\wp=\widetilde\wp\circ\pi$.
Let $\Pi:=(\wp, \pi): \mathbb{C}^2\rightarrow X=\C\P^1\times E$ be the total projection.
Our desired entire curve $\mathbf F$
for Theorem~\ref{thm-shape} 
will be obtained as $\mathbf F:=
\Pi\circ \widehat{\mathbf F}$, where $\widehat{\mathbf F}$ is the limit of some sequence of entire curves
$\{\widehat F_t: \mathbb{C}\rightarrow \C^2\}_{t\geqslant 1}$. The concentric holomorphic discs
$\{\mathbf  F\,\lvert_{\overline \D_R } \}_{R>0}$ will produce all shapes of Nevanlinna/Ahlfors currents on $X$. Fix a reference Hermitian form  $\omega:=\pi_1^*\omega_\FS+ \pi_2^*\omega_E$ on $X$.

\medskip

  Take a dense   sequence  $\{x_j\}_{j=1}^\infty$
  of distinct points  in $\C\P^1$, and another dense 
 sequence $\{y_k\}_{k=1}^\infty$ 
  of distinct points
  in $E$.
\smallskip
Enumerate the countable set
\[
\Bigg\{ \Big[(P, Q); (A_j)_{j=1}^P; (B_k)_{k=1}^Q \Big] : \, (P, Q)\in \Z_{\geqslant 0}^2\setminus \{(0, 0)\}, A_{j}, B_{k}\in \mathbb{Q}_+, \sum_{j=1}^P A_j+\sum_{k=1}^Q B_k=1  \Bigg\}
\]
as $\big\{ \big[(P_{\mathfrak m}, Q_{\mathfrak m}); \mathbf A_\fm; \mathbf B_\fm \big] \big\}_{\mathfrak m \geqslant 1}$. 
   The value  $P_{\mathfrak m}$ (resp.\ $Q_{\mathfrak m}$)
  counts the number of irreducible elliptic (resp.\  rational) components in the singular part of certain Nevanlinna/Ahlfors current $\oT$ to be obtained, 
  while  $\bA _\fm\in \mathbb{Q}_+^{P_{\mathfrak m}}$ (resp.\ $\bB_\fm\in \mathbb{Q}_+^{Q_{\mathfrak m}}$) records the  masses of $\oT$ on these elliptic (resp.\ rational) curves.  

\smallskip

As in Sect.~\ref{sect. 2}, we  fix a bijection 
 \begin{equation*}\label{defn-sigma}
  (\sigma,\tau) : \mathbb{Z}_{+}\longrightarrow \mathbb{Z}_{+}\times \mathbb{Z}_{+}.
\end{equation*}
  
\medskip

\noindent \textbf{Step $0$:}
Set $R_0:=2, \ep_0:=1$. Let $\widehat F_0: \C\rightarrow \C^2$ be  any  entire curve.

\medskip 

\noindent In {\bf Step $t-1$} for $t\geqslant 1$, we record an entire curve $\widehat F_{t-1}=(\widehat g_{t-1},\widehat h_{t-1})$ in $\C^2$, some large radius $R_{t-1}\gg 1$,  and a small error bound $0<\ep_{t-1}\ll 1$.

\medskip 

\noindent \textbf{Step $t$:}  Read $\mathfrak{m}=\sigma(t)$. 
Set $$R_t:=\max\{2R_{t-1}, P_{\mathfrak{m}}+Q_{\mathfrak{m}}\}.$$  
We choose $P_{\mathfrak{m}}+Q_{\mathfrak{m}}$ distinct points $\{\tx_{t,j}\}_{j=1}^{P_{\mathfrak{m}}}$, $\{\ty_{t,k}\}_{k=1}^{Q_{\mathfrak{m}}}$ in  $\partial\D_{R_t}$, 
such that the closed unit discs
$\{\overline{\mathbb{D}}(\tx_{t,j}, 1)\}_{j=1}^{P_{\mathfrak{m}}}$, $\{\overline{\mathbb{D}}(\ty_{t,k}, 1)\}_{k=1}^{Q_{\mathfrak{m}}}$ 
are pairwise disjoint.
Without loss of generality, we may assume that $P_{\mathfrak{m}}\geqslant 1$. 
By Lemma \ref{lem-exist}, for the point $\tx_{t, 1}$ and its neighborhood $\D(\tx_{t, 1}, 1)$, we can find some entire function $\varphi_{t, 1}$ such that 
\begin{equation}\label{condition-varphi}
|\varphi_{t, 1}(\tx_{t, 1})|>1  \quad\text{and}\quad |\varphi_{t, 1}|<1 \quad \text{on }\, \, \, \overline \D_{R_{t}}\setminus \D(\tx_{t, 1}, 1).
\end{equation}
Since  $\tx_{t,1}, \tx_{t,2},\dots,\tx_{t, {P_{\mathfrak{m}}}}$ are in the same circle centered at the origin, 
by composing $\varphi_{t, 1}$ with some rotations,
 we can also define $\varphi_{t,j}(z):=  \varphi_{t,1} (e^{i\theta_{t,j}}\cdot z)$ with likewise property, 
where each angle $\theta_{t,j}\in [0, 2\pi)$ is uniquely determined by
$e^{i\theta_{t,j}}\cdot \tx_{t, j}=\tx_{t, 1}$
for $j=2, \dots, P_{\mathfrak{m}}$.
Similarly, we can obtain $\psi_{t,k}$ for $1\leqslant k\leqslant Q_{\mathfrak{m}}$  by composing $\varphi_{t,1}$ with some rotations, such that 
\begin{equation}\label{condition-psi}
|\psi_{t, k}(\ty_{t, k})|>1 , \quad |\psi_{t, k}|<1 \quad \text{on }\, \, \,\overline \D_{R_{t}}\setminus \D(\ty_{t, k}, 1).
\end{equation}

\smallskip
Let $\delta_t\in (0, 2^{-t})$ be a  small constant satisfying 
\begin{equation*}\label{defn-delta-t}
\dist_{\omega_\FS}(x_{j_1},x_{j_2})>6\delta_t, \quad \dist_{\omega_E}(y_{k_1},y_{k_2})>6\delta_t,
\end{equation*}
for all $1 \leqslant j_1\neq j_2 \leqslant P_{\mathfrak{m}}, 1 \leqslant  k_1\neq k_2 \leqslant Q_{\mathfrak{m}}$.

 Since $\wp : \C \rightarrow \C\P^1$ and $\pi: \C\rightarrow E$ are Lipschitz and surjective, 
 applying  Lemma~\ref{lem-disc-wp-pi}, 
 we can find auxiliary entire functions $g_{t-1}^\star,h_{t-1}^\star$, such that on $\overline\D_{R_{t-1}+1}$
 \begin{equation}
     \label{g* is very near to}
     |\widehat{g}_{t-1}-g^\star_{t-1}|<\epsilon_{t-1}\cdot 2^{-t-2},\qquad
     |\widehat{h}_{t-1}-h^\star_{t-1}|<\epsilon_{t-1}\cdot 2^{-t-2}, 
 \end{equation}
$$ \dist_{\omega_\FS}\big(\wp\circ \widehat g_{t-1} ,\wp\circ g_{t-1}^\star   \big)<\delta_t, \qquad \dist_{\omega_E}\big(\pi\circ \widehat h_{t-1} ,\pi\circ h_{t-1}^\star   \big)<\delta_t,$$
and such that for every $1\leqslant j\leqslant P_{\mathfrak{m}}, 1\leqslant k\leqslant Q_{\mathfrak{m}}$,
\begin{equation}\label{dist-x-h}
\dist_{\omega_\FS}\big(x_j ,\wp\circ g_{t-1}^\star\big)<\delta_t \quad\text{on }\,\, \,  \overline\D (\tx_{t,j},1),    
\end{equation}
\begin{equation}\label{dist-y-g}
\dist_{\omega_E}\big(y_k ,\pi\circ h_{t-1}^\star\big)<\delta_t \quad\text{on }\, \, \overline\D (\ty_{t,k},1).
\end{equation}

Now we define 
\begin{equation}\label{defn-g-t-case1}
\widehat g_{t}:=  g_{t-1}^\star  + \sum_{k=1}^{{Q_{\mathfrak{m}}}}  q_{t, k}\cdot\psi_{t,k}  ^{\alpha_t} =: g_{t-1}^\star + \Psi_t ,
\end{equation}
\begin{equation}\label{defn-h-t-case1}
\widehat h_{t}:=h_{t-1}^\star + \sum_{j=1}^{{P_{\mathfrak{m}}}}  p_{t, j}\cdot\varphi_{t,j}  ^{\alpha_{t}}  =: h_{t-1}^\star  +\Phi_t,
\end{equation}
where the parameters $p_{t, j}, q_{t,k}\in (0, 1]$ and  the exponent $\alpha_{t}\gg 1$ are to be determined later by sophisticated reasoning.
Thus
we obtain an entire curve $\widehat F_{t}:=(\widehat g_{t},\widehat h_{t})$ in $\C^2$. 

\smallskip
The goal of {\bf Step $t$} is to select some appropriate parameters $\alpha_{t},
p_{t, \bullet},
q_{t, \bullet}$,
so that the obtained entire curve $\widehat{F}_t$
satisfies  the following  Propositions \ref{key-prop-shape} and \ref{Area distribution},
whose proofs will be postponed to Sect.~\ref{subsection: proof of  lemma 3.3}. 
Define the following open subsets 
$$U_t :=  \bigcup_{j=1}^{P_{\mathfrak{m}} } \D(x_{j},2\delta_t)\times E ,\quad V_t:=  \C\P^1 \times \bigcup_{k=1}^{Q_{\mathfrak{m}} } \D(y_{k},2\delta_t)$$ 
of $\C\P^1 \times E$, 
and the following open subsets
$$\widehat U_t :=  \bigcup_{j=1}^{P_{\mathfrak{m}} } \D(\tx_{t,j}, 1),\quad \widehat V_t:=   \bigcup_{k=1}^{Q_{\mathfrak{m}} } \D(\ty_{t,k}, 1)$$
 of $\C$.
For every possible $ j,k,\ell$, we put
\begin{align*}
\Ac_{t,j}
&
:=   \Area_{\omega} \big(F_t \big(\D(\widehat x_{t,j}, 1)\cap  \D_{R_t}\big)\big),
\quad
\Ac_{t,j}^\oN
:=
\int_0^{R_t}    \Area_{\omega} \big(F_t \big(\D(\widehat x_{t,j}, 1)\cap  \D_r\big)\big) \,{\dd r\over r};
\\
\Bc_{t,k}
&
:=   \Area_{\omega} \big(F_t\big(\D(\widehat y_{t,k}, 1) \cap \D_{R_t}\big)\big),
\quad
\Bc_{t,k}^\oN
:= \int_0^{R_t}   \Area_{\omega} \big(F_t\big(\D(\widehat y_{t,k}, 1) \cap \D_r\big)\big) \,{\dd r\over r}.
\end{align*}

\begin{proposition}
  \label{key-prop-shape}
In every Step $t\geqslant 1$, there exists certain large integer $M_t\gg 1$  such that, if $\alpha_t\geqslant M_t$ and
$\max_{j}\{ p_{t,j}\cdot\alpha_t\}\geqslant 1,
\, \max_{k}\{q_{t,k}\cdot\alpha_t\} \geqslant 1
$, then the following estimates hold simultaneously.
\begin{enumerate}
\item [\bf ($\clubsuit$).]
     On $ \D_{R_{t-1}+1}$, $\widehat F_t$ is very close to $\widehat F_{t-1}$, more precisely 
    \begin{equation}
        \label{F_t is small on small disc-shape}
        \|\widehat F_t(z)-\widehat F_{t-1} (z)\|_{\mathbb{C}^2}< 2^{-t}\cdot \epsilon_{t-1}   \quad\text{for }\, z\in \D_{R_{t-1}+1}.
    \end{equation}
    
    \smallskip
    
 \item [\bf ($\spadesuit$).]
    The entire curve $F_t:=\Pi\circ \widehat F_t$ satisfies
    the length-area conditions 
    \begin{equation}
    \label{length-area conditions all-shape}
  \frac{\Length_\omega (F_{t}(\D_{R_t}))}{\Area_\omega (F_{t}(\D_{R_t}))}\,
    <\,2^{-t} ,
  \quad
  \frac{\fL_{F_t}(R_t) }{\fA_{F_t}(R_t)}\,
  <\,
  2^{-t}.
    \end{equation}
    
   \smallskip
   
\item[($\heartsuit$).] One has
\begin{equation}
    \label{I=0, Ahfors diffuse is trivial, estimate}
 { \Area_{\omega} (F_t(\D_{R_t})\setminus (U_t \cup V_t) ) \over \Area_{\omega}(F_t(\D_{R_t}))  } < 2^{-t},  
 \end{equation} 
\begin{equation}
    \label{I=0, Nevanlinna diffuse is trivial, estimate} 
    {1\over \fA_{F_t}(R_t)   }  \int_{0} ^{R_t}  
\frac{\dd r}{ r} \Area_{\omega}\big(F_t(\D_r)\setminus (U_t \cup V_t)\big)<2^{-t}.  
\end{equation} 
\end{enumerate}
\end{proposition}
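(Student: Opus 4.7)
The plan is to adapt the strategy of Proposition~\ref{key technical difficulty} to the enriched setting with two groups of building blocks $\{\varphi_{t,j}\}_{j=1}^{P_\fm}$ (entering $\widehat h_t$) and $\{\psi_{t,k}\}_{k=1}^{Q_\fm}$ (entering $\widehat g_t$). The essential structural inputs are, in parallel with Section~\ref{sect. 2}, that every such function has modulus $>1$ on its own small disc (either $\D(\tx_{t,j},1)$ or $\D(\ty_{t,k},1)$) while being bounded by some $m_0<1$ on the compact complement $\overline{\D}_{R_t}\setminus(\widehat U_t\cup \widehat V_t)$. Since all these functions arise from $\varphi_{t,1}$ by rotation, the quantities $m:=\max_{\overline\D(\tx_{t,1},1)\cap\overline\D_{R_t}}|\varphi_{t,1}|>1$, $m':=\max_{\overline\D_{R_t}}|\varphi_{t,1}'|$ and $m_0<1$ uniformly govern all derivative estimates; I also set $M:=\max_{\overline\D_{R_t}}\{|g_{t-1}^{\star\prime}|,|h_{t-1}^{\star\prime}|\}$.

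Condition $(\clubsuit)$ is immediate: on $\overline\D_{R_{t-1}+1}\subset\overline\D_{R_t}\setminus(\widehat U_t\cup\widehat V_t)$ both $|\Phi_t|$ and $|\Psi_t|$ are $O(m_0^{\alpha_t})$, which, combined with~\eqref{g* is very near to}, makes $\|\widehat F_t-\widehat F_{t-1}\|<2^{-t}\ep_{t-1}$ once $\alpha_t\gg 1$. For $(\spadesuit)$, I will transcribe the length-area calculation of Section~\ref{sect. 2} almost verbatim: the hypothesis $\max_j\{p_{t,j}\alpha_t\}\geqslant 1$ (or its $\psi$-counterpart) selects an index, say $j_0$; on a favorable compact sub-ball $B_{t,j_0}\subset\D(\tx_{t,j_0},1)$ with $|\varphi_{t,j_0}|\geqslant m^{3/4}$ and $|\varphi'_{t,j_0}|\geqslant c_{t,j_0}>0$, the fact that $\pi$ is a local isometry and the off-block $\psi$-derivatives on $B_{t,j_0}$ are $O(\alpha_t m_0^{\alpha_t-1})$ yields $|\widehat h'_t|\gtrsim (m^{3/4})^{\alpha_t-1}$, hence $\Area_\omega(F_t(\D_{R_t}))\gtrsim (m^{3/2})^{\alpha_t-1}$; a crude bound gives $\Length_\omega(F_t(\partial\D_{R_t}))\lesssim R_t\alpha_t m^{\alpha_t-1}m'$, from which the Ahlfors ratio in~\eqref{length-area conditions all-shape} follows, and the Nevanlinna analogue is obtained by the further integration $\int_0^{R_t}\dd r/r$.

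The heart of the argument is $(\heartsuit)$, where I will show that $F_t(\widehat U_t\cup\widehat V_t)\subset U_t\cup V_t$ for $\alpha_t\gg 1$. Indeed, on each $\overline\D(\tx_{t,j},1)$ every $\psi_{t,k}$ has modulus $\leqslant m_0<1$, so $|\Psi_t|=O(m_0^{\alpha_t})$ is arbitrarily small; together with~\eqref{dist-x-h} and the Lipschitz property of $\wp=\widetilde\wp\circ\pi$ (which factors through the compact $E$), this yields $\dist_{\omega_\FS}(x_j,\wp\circ\widehat g_t)<2\delta_t$, hence $F_t(\overline\D(\tx_{t,j},1))\subset U_t$; symmetrically $F_t(\overline\D(\ty_{t,k},1))\subset V_t$. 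Taking complements gives $\D_{R_t}\setminus F_t^{-1}(U_t\cup V_t)\subset\D_{R_t}\setminus(\widehat U_t\cup\widehat V_t)$, and on the latter set $|\widehat F_t'|\leqslant M+O(\alpha_t m_0^{\alpha_t-1})=M+o(1)$, so a Lipschitz bound on $\wp,\pi$ delivers the crude uniform estimate $\Area_\omega(F_t(\D_{R_t})\setminus(U_t\cup V_t))\lesssim R_t^2$. Dividing by the already established exponential lower bound $(m^{3/2})^{\alpha_t-1}$ on $\Area_\omega(F_t(\D_{R_t}))$ proves~\eqref{I=0, Ahfors diffuse is trivial, estimate}, and weighting by $\dd r/r$ and integrating over $(0,R_t)$ gives~\eqref{I=0, Nevanlinna diffuse is trivial, estimate}.

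The main obstacle is preventing the area lower bound from degenerating as some $p_{t,j},q_{t,k}\in(0,1]$ approach $0$; the hypothesis $\max_j\{p_{t,j}\alpha_t\}\geqslant 1$ or $\max_k\{q_{t,k}\alpha_t\}\geqslant 1$ is designed precisely to rescue this, absorbing the small parameter into the dominant $(m^{3/4})^{\alpha_t-1}$ factor. All implicit constants depend on $t,R_t,M,m,m',m_0,P_\fm,Q_\fm$ but not on $\alpha_t$, so a single threshold $M_t$ suffices to ensure $(\clubsuit),(\spadesuit),(\heartsuit)$ simultaneously.
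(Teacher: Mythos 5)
Your proof is correct and tracks the paper's own argument closely in its overall structure: for $(\clubsuit)$, the smallness of $\Phi_t,\Psi_t$ on $\D_{R_{t-1}+1}$ combined with~\eqref{g* is very near to}; for $(\heartsuit)$, the containment $F_t(\widehat U_t\cup\widehat V_t)\subset U_t\cup V_t$ (the paper's~\eqref{D-x-1-subset-x-delta-t}), the uniform $O_t(1)$ bound on $\Area_\omega\big(F_t(\D_{R_t}\setminus(\widehat U_t\cup\widehat V_t))\big)$ coming from $|\widehat g_t'|,|\widehat h_t'|=O_t(1)$ off $\widehat U_t\cup\widehat V_t$, and comparison with the exponential lower bound on $\Area_\omega(F_t(\D_{R_t}))$. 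The one genuine departure is in the area lower bound for $(\spadesuit)$: the paper picks $k_0$ with $q_{t,k_0}\alpha_t\geqslant 1$ and invokes Bloch's theorem (the second, harder inequality of Lemma~\ref{lem-area-lower-bound-shpae}), since $\wp\circ\widehat g_t$ lands in $\C\P^1$ through the non-isometric Weierstrass map and one must count covered fundamental domains. You instead pick $j_0$ with $p_{t,j_0}\alpha_t\geqslant 1$ and use that $\pi:\C\to E$ is a local isometry, so a pointwise bound $|\widehat h_t'|\gtrsim(m^{3/4})^{\alpha_t-1}$ on a fixed compact sub-ball $B_{t,j_0}\subset\D(\tx_{t,j_0},1)$ integrates immediately to $\Area_\omega(F_t(\D_{R_t}))\gtrsim(m^{3/2})^{\alpha_t-1}$; this is exactly the ``simpler'' first inequality of Lemma~\ref{lem-area-lower-bound-shpae} that the paper states but does not bother proving. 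Under the stated hypotheses (both maxima $\geqslant 1$, plus the WLOG $P_\fm\geqslant1$) this is equally legitimate and eliminates Bloch's theorem from the proof of Proposition~\ref{key-prop-shape}. A minor slip: ``off-block $\psi$-derivatives on $B_{t,j_0}$'' should read ``$\varphi$-derivatives,'' since $\widehat h_t=h_{t-1}^\star+\sum_j p_{t,j}\varphi_{t,j}^{\alpha_t}$ involves only the $\varphi$'s; the $\psi$'s enter $\widehat g_t$ instead.
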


\smallskip

\begin{proposition}
\label{Area distribution}
Let $M_t$ be as in Proposition \ref{key-prop-shape}.
In every Step $t\geqslant 1$, given
\[
\Omega_0:=
[A_1 : \cdots:
A_{P_{\mathfrak{m}}}
:
B_1
:
\cdots
:
B_{Q_{\mathfrak{m}}}]
\in
\mathbb{RP}^{P_{\mathfrak{m}}+Q_{\mathfrak{m}}-1}  
\]
with all coordinates positive.
There exist $\alpha_t, p_{t,\bullet}, q_{t,\bullet}$ satisfying $\alpha_t\geqslant M_t$ and
$\max_{j}\{ p_{t,j}\cdot\alpha_t\}\geqslant 1,
\, \max_{k}\{q_{t,k}\cdot\alpha_t\} \geqslant 1$, such that,  on $\R\P^{P_{\mathfrak{m}}+Q_{\mathfrak{m}}-1}$ with a distance $\dist$, one has
$$\dist \big(  [\Ac_{t,1}:\cdots: \Ac_{t,P_{\mathfrak{m}}} : \Bc_{t,1}:\cdots: \Bc_{t,Q_{\mathfrak{m}}} ]       ,         \, \Omega_0 \big)
<2^{-t}, $$
$$ \dist \big(  [\Ac_{t,1}^\oN:\cdots: \Ac_{t,P_{\mathfrak{m}}}^\oN : \Bc_{t,1}^\oN:\cdots: \Bc_{t,Q_{\mathfrak{m}}}^\oN ]       ,         \, \Omega_0 \big)<2^{-t}. $$ 
\end{proposition}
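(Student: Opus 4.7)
The plan is to prescribe the unknowns in the form
\[
p_{t,j}\,=\,c_t\,\sqrt{A_j},\qquad q_{t,k}\,=\,c_t\,\sqrt{B_k\,\varrho/2},
\]
with $c_t>0$ to be tuned last, and to prove the leading-order formulas
\[
\Ac_{t,j}\,=\,c_t^2\,\alpha_t^2\,K_t\cdot A_j\,(1+o_{\alpha_t}(1)),\qquad \Bc_{t,k}\,=\,c_t^2\,\alpha_t^2\,K_t\cdot B_k\,(1+o_{\alpha_t}(1)),
\]
in which
\[
K_t\,:=\,\int_{\D(\tx_{t,1},1)\cap\D_{R_t}}\,|\varphi_{t,1}|^{2\alpha_t-2}\,|\varphi_{t,1}'|^2\,\tfrac{i}{2}\,\dd z\wedge\dd\bar z
\]
is a single positive integral which, by the rotational symmetries $\varphi_{t,j}(z)=\varphi_{t,1}(e^{i\theta_{t,j}}z)$ and $\psi_{t,k}=\varphi_{t,1}\circ(\mathrm{rotation})$ together with $\tx_{t,j},\ty_{t,k}\in\partial\D_{R_t}$, is common to every index. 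Once these are established, the projective coordinate vectors in the statement automatically match $\Omega_0$ up to $o_{\alpha_t}(1)$, and the Nevanlinna versions $\Ac^{\oN}_{t,j},\Bc^{\oN}_{t,k}$ follow verbatim after integrating the pointwise asymptotic against $\dd r/r$ on $[R_t-1,R_t]$.

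To analyze $\Ac_{t,j}$, I would split $F_t^*\omega=(\wp\circ\widehat g_t)^*\omega_\FS+|\widehat h_t'|^2\,\tfrac{i}{2}\,\dd z\wedge\dd\bar z$. Pairwise disjointness of the unit discs around $\tx_{t,\bullet}$ and $\ty_{t,\bullet}$ delivers a uniform bound $|\psi_{t,k}|\leqslant m_0<1$ on $\D(\tx_{t,j},1)\cap\D_{R_t}$, so $\Psi_t$ and $\Psi_t'$ from~\eqref{defn-g-t-case1} decay super-exponentially and $\widehat g_t\approx g_{t-1}^\star$ is $O(1)$; hence the $\omega_\FS$-contribution to $\Ac_{t,j}$ is $O(1)$. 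The dominant piece of $|\widehat h_t'|^2$ is $p_{t,j}^2\alpha_t^2|\varphi_{t,j}|^{2\alpha_t-2}|\varphi_{t,j}'|^2$, which integrates to $K_t$ by the rotational symmetry above, yielding $\Ac_{t,j}=p_{t,j}^2\alpha_t^2 K_t+O(1)$.

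The treatment of $\Bc_{t,k}$ requires the Duval-type trick referenced in the introduction. Symmetrically, $\Phi_t$ is super-exponentially small on $\D(\ty_{t,k},1)\cap\D_{R_t}$, so $|\widehat h_t'|^2=O(1)$ and the dominant contribution is $(\wp\circ\widehat g_t)^*\omega_\FS=\rho(\widehat g_t)|\widehat g_t'|^2\,\tfrac{i}{2}\,\dd z\wedge\dd\bar z$, where $\rho$ is the $\Gamma$-periodic density of $\wp^*\omega_\FS$ on $\C$. Since $\widetilde\wp:E\to\C\P^1$ has degree $2$ and $E$ has $\omega_E$-area $\varrho$, the fundamental-domain mean of $\rho$ is $\bar\rho=2/\varrho$. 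Because $\widehat g_t$ covers $E$ exponentially many times on the disc, an Ahlfors covering-surface argument replaces $\rho(\widehat g_t)$ by $\bar\rho$ with a relative cost bounded by image-boundary-length over image-area, which vanishes as $\alpha_t\to\infty$. The rotational symmetry again collapses the remaining integral to $K_t$, giving $\Bc_{t,k}=(2/\varrho)\,q_{t,k}^2\,\alpha_t^2\,K_t\,(1+o_{\alpha_t}(1))$.

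Substituting the ansatz reproduces the ratios of $\Omega_0$ up to $o_{\alpha_t}(1)$. I then set $c_t:=C_0/\alpha_t$ with the explicit constant $C_0:=\max\{(\max_j\sqrt{A_j})^{-1},(\sqrt{\varrho/2}\max_k\sqrt{B_k})^{-1}\}$, which enforces $\max_j p_{t,j}\alpha_t\geqslant 1$ and $\max_k q_{t,k}\alpha_t\geqslant 1$; since $c_t\,m_0^{\alpha_t}\to 0$, the closeness condition $(\clubsuit)$ from Proposition~\ref{key-prop-shape} is automatically preserved on $\D_{R_{t-1}+1}$. It then suffices to pick $\alpha_t\geqslant M_t$ large enough to drive every $o_{\alpha_t}(1)$ below $2^{-t}$. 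The main obstacle is turning Duval's trick into a quantitative estimate: one must prove that $\rho\circ\widehat g_t$ equidistributes sufficiently fast on $E$ with a remainder bounded by the image-boundary length on $\partial(\D(\ty_{t,k},1)\cap\D_{R_t})$, which fortunately grows only at half the exponential rate of the image area. Everything else reduces to routine concentration-of-mass bookkeeping at the unique maximum of $|\varphi_{t,1}|$ on $\D(\tx_{t,1},1)\cap\D_{R_t}$.
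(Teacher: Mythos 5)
Your proposal is correct and follows essentially the same route as the paper's proof: you derive the same leading-order asymptotics $\Ac_{t,j}\sim p_{t,j}^2\alpha_t^2 K_t$ and $\Bc_{t,k}\sim (2/\varrho)\,q_{t,k}^2\alpha_t^2 K_t$ with a common integral $K_t$ via the rotational symmetry, your ``Ahlfors covering-surface'' step is exactly the paper's Duval trick $\widetilde\wp^*\omega_{\FS}=(2/\varrho)\,\omega_E+\dd\theta$ followed by Stokes to push the discrepancy to a boundary-length term of lower exponential order, and the error is then killed by the Bloch-theorem lower bound on the image area from Lemma~\ref{lem-area-lower-bound-shpae}. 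The only cosmetic difference is normalization: you set $c_t=C_0/\alpha_t$ so that $\max_j p_{t,j}\alpha_t$ and $\max_k q_{t,k}\alpha_t$ hit $1$ exactly, whereas the paper fixes a small constant $c'$ independent of $\alpha_t$ and lets $p_{t,j}\alpha_t,q_{t,k}\alpha_t\to\infty$; both satisfy the hypotheses of Proposition~\ref{key-prop-shape}.
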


\smallskip

 Applying Proposition \ref{Area distribution} with  $\Omega_0=[\bA_\fm : \bB_\fm]$,
we can obtain some admissible parameters $\alpha_t, p_{t,\bullet}, q_{t,\bullet}$.

\smallskip

Lastly, we finish the {\bf Step} $t$ by selecting a small positive error bound 
 $\epsilon_t\ll \epsilon_{t-1}/2$,
 such that, for any mild holomorphic perturbation $\widehat{\mathbf F}$ of $\widehat F_t$ with $\|\widehat{\mathbf F}-\widehat F_t\|_{\C^2}< \epsilon_{t}$ on $\D_{R_t+1}$, 
 after replacing $F_t$ by  $\mathbf {F}:=\Pi\circ \widehat{\mathbf F}$ and enlarging $2^{-t}$ to $2^{-t+1}$ on the right-hand-sides,
\begin{itemize}
    \smallskip
\item [\bf ($\diamondsuit$).] 
\text{all the corresponding estimates in Propositions \ref{key-prop-shape} and \ref{Area distribution} maintain.}
\end{itemize}

The existence of such $\epsilon_t$ in the
  consideration
($\diamondsuit$)
can be proved by a {\em reductio ad absurdum} argument,  since the $\Cc^0$-convergence of a sequence of holomorphic functions
on a larger open set $\D_{R_t+1}$
can guarantee the $\Cc^1$-convergence on the smaller compact set $\overline{\D}_{R_t}$ by Cauchy's integral formula.

\medskip

Let us we briefly explain  the motivation.  
($\clubsuit$) ensures that  $\{\widehat F_t\}_{t\geqslant 1}$ converges to an entire curve $\widehat{\mathbf F}: \mathbb{C}\rightarrow \C^2$. ($\spadesuit$) can guarantee that
$\mathbf F=
\Pi\circ \widehat{\mathbf F}$ also satisfies 
a similar  length-area condition, provided that $\{\epsilon_t\}_{t\geqslant 1}$ shrink to zero sufficiently fast.
Similarly, ($\heartsuit$) is  to force $\mathbf F$ to produce Nevanlinna/Ahlfors currents having singular parts supported
in precise locations.

Now we sketch our idea of generating singular parts of Nevanlinna/Ahlfors currents. 
For any $\mathfrak{m}\geqslant 1$, write $\sigma^{-1}(\mathfrak{m})$ in the increasing order as
$\{t_{\mathfrak m, s}\}_{s\geqslant 1}$. 
By Propositions \ref{key-prop-shape} and \ref{Area distribution},  
we can check that any Nevanlinna/Ahlfors current $\mathcal{T}$ produced by some subsequence of  $\{\mathbf F \,\lvert_{\overline \D_{R_{t_{\mathfrak m, s}}}}\}_{s \geqslant 1}$ has zero mass outside $ \bigcup_{j=1}^{P_{\mathfrak m}} \{x_{j} \} \times E$ and $\C\P^1 \times \bigcup _{k=1}^{Q_{\mathfrak m}} \{y_k\}$.
Moreover, 
we can make sure that $\mathcal{T}=\mathcal{T}_{\alg}
$  charges  preassigned positive masses $\bA_\fm$ 
and  $\bB_\fm$ 
on each of
these irreducible algebraic curves.

\subsection{Proofs of Propositions \ref{key-prop-shape} and \ref{Area distribution}}\label{subsection: proof of  lemma 3.3}

\begin{proof}[Proof of~\eqref{F_t is small on small disc-shape}]
Note that on $\D_{R_{t-1}+1}$, all $\varphi_{t, \bullet}, \psi_{t, \bullet}$ 
have moduli strictly less than $1$,  and $p_{t,\bullet},q_{t,\bullet}\in (0,1]$ by our construction.
Thus by letting  $\alpha_t\geqslant M_t\gg 1$, 
 from \eqref{defn-g-t-case1}, \eqref{defn-h-t-case1},  on $\D_{R_{t-1}+1}$ we have
\[
|
\widehat{g}_t
-g_{t-1}^\star
|
<
\epsilon_{t-1}\cdot 2^{-t-2},
\qquad
|
\widehat{h}_t
-h_{t-1}^\star
|
<
\epsilon_{t-1}\cdot 2^{-t-2}.
\]
This combining with~\eqref{g* is very near to} finishes the proof.
\end{proof}

To prove the condition
 $(\spadesuit)$,
we introduce the following notation 
$$M:= \max_{\overline \D_{R_t}} \big\{ |(g_{t-1}^\star
)'|,|(h_{t-1}^\star)'| \big\}, \qquad m:=\max_{\overline \D_{R_t}\cap\overline \D(\widehat x_{t,1},  1)} |\varphi_{t,1}|>1,$$
$$m':=\max_{\overline\D_ {R_t}}|\varphi'_{t,1}|>0, \qquad m_0:= \max_{\overline\D_ {R_t}\setminus \D(\widehat x_{t,1}, 1)}|\varphi_{t,1}|<1.$$
Since every $\varphi_{t,j}$ for $j=2, \dots, P_{\mathfrak{m}}$ is obtained by composing  $\varphi_{t,1}$ with some rotation, we have 
$$
m=\max_{\overline \D_{R_t}\cap\overline\D(\widehat x_{t,j}, 1)}|\varphi_{t,j}|,
\qquad
m'=\max_{\overline\D_{R_t}} |\varphi'_{t,j}|, \qquad
m_0=\max_{\overline\D_{R_t}\setminus \D(\widehat x_{t,j}, 1)}|\varphi_{t,j}|.
$$ 
Similar equalities also hold  for the functions $\psi_{t, \bullet}$.

\medskip

\noindent{\bf Convention.} From now on, we use $O_t(1)$ to denote some positive constants 
independent of $\alpha_t$ (but can 
depend on $t, R_t,  \varphi_{t, 1}$, etc), which vary according to the context.

\smallskip

\begin{proof}[Proof of the first inequality of \eqref{length-area conditions all-shape}]
Write $\wp^*\omega_\FS=\vartheta(z)\, i \dd z\wedge\dd \overline z$ for some  smooth function  $\vartheta\geqslant 0$ on $\C$. Since $\wp$ is doubly periodic with respect to the lattice
$\Gamma$, so is $\vartheta$. In particular,  $\vartheta$ is  bounded from above. 
By definition,
$$\Length_{\omega} \big(F_t( \partial \D_{R_t}) \big) =\int_{\partial \D_{R_t}}
 \big|\widehat F_t^*\omega \big|^{1/2} \quad
\text{and}\quad 
\Area_{\omega}\big(F_t(\D_{R_t})\big) =\int_{\D_{R_t}}\widehat F_t^*\omega, $$
where 
\begin{align}
\widehat F_t^*\omega
&= 
\vartheta \big(\tg_t(z)\big)\, i\, \dd \tg_t(z) \wedge \dd \overline { \tg_t(z) }
+  
i/2\, \dd \wth_t(z) \wedge \dd \overline {\wth_t(z)}  \nonumber \\
&
=
\vartheta \big(\tg_t(z)\big)  \big| 
\tg_t\,'(z)
\big|^2\, 
i\,\dd z\wedge \dd \overline z+  
 \big|\wth_t'(z)\big|^2   \,i/2 \,\dd z\wedge \dd \overline z.  \label{formula-F-omega}
 \end{align}
Taking derivatives on~\eqref{defn-g-t-case1},~\eqref{defn-h-t-case1}, we get  
$$\tg_t\,'=(g_{t-1}^\star)' +\Psi_t' =(g_{t-1}^\star)'+ \sum_{k=1}^{{Q_{\mathfrak{m}}}}  q_{t, k}\cdot  \alpha_t \,\psi_{t,k}^{\alpha_t-1}\psi_{t,k}' , $$
$$   \wth_t'=(h_{t-1}^\star)' +\Phi_t'  =(h_{t-1}^\star)' + 
 \sum_{j=1}^{{P_{\mathfrak{m}}}}  p_{t, j}\cdot \alpha_t \, \varphi_{t,j}  ^{\alpha_{t}-1}\varphi_{t,j}' .$$
Note that  $\varphi_{t, \bullet}$, $\psi_{t, \bullet}$ satisfy the estimates \eqref{condition-varphi},~\eqref{condition-psi}, while  $p_{t, \bullet}, q_{t, \bullet}\leqslant 1$. By much the same argument as \eqref{computation-Phi-t-s}, on $\overline \D_{R_t}$, for $\alpha_t\gg 1$, we have
\begin{equation}\label{bound-Phi-Psi}
|\Phi_t'| \lesssim \alpha_t m^{\alpha_t-1}m', \qquad |\Psi_t'|\lesssim  \alpha_t m^{\alpha_t-1}m'.
\end{equation}
Together with the  boundedness of $\vartheta, (g_{t-1}^\star)', (h_{t-1}^\star)'$  on $\overline \D_{R_t}$, we obtain that
\begin{equation}\label{length-upper-bound-shape}
\Length_{\omega} \big(F_t( \partial \D_{R_t}) \big)\lesssim  R_t \cdot \alpha_t 
 m^{\alpha_t-1}  m'.
 \end{equation}

On the other hand, by Lemma \ref{lem-area-lower-bound-shpae} below, $\Area_\omega (F_{t}(\D_{R_t}))\gtrsim (m^{3/2})^{\alpha_t-1}$. Thus,
 $$ \frac{\Length_\omega (F_{t}(\D_{R_t}))}{\Area_\omega (F_{t}(\D_{R_t}))} \leqslant
 O_t(1)
 \cdot
 \frac{ \alpha_t 
 m^{\alpha_t-1}  m'}{(m^{3/2} )^{\alpha_{t}-1}},   $$
which tends to $0$ as  $\alpha_t\to \infty$. This concludes the proof.
\end{proof}

\begin{lemma}\label{lem-area-lower-bound-shpae}
For $\alpha_t\gg 1$, for every possible $j,k$, one has
\begin{equation*}
    \label{second one, easy analogue}
\int_{\D(\tx_{t,j}, 1)\cap \D_{R_t}} (\pi\circ\wth_t)^*  \omega_E \gtrsim p_{t,j}^2\cdot \alpha_t ^2 (m^{3/2})^{\alpha_t -1},
\end{equation*}
\begin{equation*}
    \label{first one, not easy analogue}
    \int_{\D(\ty_{t,k}, 1)\cap \D_{R_t}}  (\wp\circ\tg_t)^* \omega_\FS \gtrsim  q_{t,k}^2\cdot \alpha_t ^2 (m^{3/2})^{\alpha_t -1}.
    \end{equation*}
\end{lemma}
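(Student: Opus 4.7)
The plan is to mirror the proof of equation~\eqref{area-lower-bound-class} in Section~\ref{sect. 2}. I will focus on the first inequality; the second follows by a parallel argument with $\psi_{t,k}, q_{t,k}, \tg_t$ in place of $\varphi_{t,j}, p_{t,j}, \wth_t$, plus an additional step because $\wp$ is not a local isometry.

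First I would select a test region. Since $\max_{\overline\D(\tx_{t,j},1)\cap\overline\D_{R_t}} |\varphi_{t,j}| = m > 1$ and $\varphi_{t,j}$ is non-constant, the open set
$$U_{t,j} := \D(\tx_{t,j},1)\cap \D_{R_t} \cap \big\{|\varphi_{t,j}| > m^{3/4}\big\} \cap \big\{\varphi_{t,j}' \neq 0\big\}$$
is non-empty; pick a closed ball $B_{t,j} \Subset U_{t,j}$, on which $|\varphi_{t,j}'| \geqslant c_{t,j} > 0$ by compactness. Next I would isolate the dominant term in $\wth_t' = (h_{t-1}^\star)' + \sum_{j'=1}^{P_{\mathfrak m}} p_{t,j'} \alpha_t \varphi_{t,j'}^{\alpha_t-1} \varphi_{t,j'}'$. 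On $B_{t,j}$ the summand with $j' = j$ has modulus at least $p_{t,j}\alpha_t(m^{3/4})^{\alpha_t-1}c_{t,j}$; because the closed discs $\overline\D(\tx_{t,j'},1)$ are pairwise disjoint, $|\varphi_{t,j'}| \leqslant m_0 < 1$ on $B_{t,j}$ for $j' \neq j$, so those cross-terms together contribute at most $O_t(1)\cdot \alpha_t\cdot m_0^{\alpha_t-1}$, while $(h_{t-1}^\star)'$ contributes at most $M = O_t(1)$. Since $m^{3/4} > 1 > m_0$, for $\alpha_t$ large the main term beats the two error terms (the threshold may depend on the prescribed $p_{t,j}$; in the degenerate regime where $p_{t,j}^2\alpha_t^2(m^{3/2})^{\alpha_t-1}$ stays $O_t(1)$, the conclusion is trivial because $\wth_t'$ is a non-identically-zero analytic function on $B_{t,j}$). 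This yields the pointwise estimate $|\wth_t'(z)| \gtrsim p_{t,j}\alpha_t(m^{3/4})^{\alpha_t-1}$ on $B_{t,j}$. Since $\pi:\C\to E$ is a local isometry, $(\pi\circ\wth_t)^*\omega_E = |\wth_t'|^2 \cdot (i/2)\,\dd z\wedge \dd\overline z$, so squaring and integrating over $B_{t,j}$ delivers the required lower bound $\gtrsim p_{t,j}^2\alpha_t^2(m^{3/2})^{\alpha_t-1}$.

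The second inequality produces, by the same reasoning, $|\tg_t'(z)| \gtrsim q_{t,k}\alpha_t(m^{3/4})^{\alpha_t-1}$ on a compact ball $B_{t,k} \Subset \D(\ty_{t,k},1)\cap \D_{R_t}$. The genuine obstacle is that $\wp^*\omega_\FS = \vartheta(z)\cdot(i/2)\,\dd z\wedge \dd\overline z$ for a smooth non-negative function $\vartheta$ that vanishes on the discrete $\Gamma$-periodic critical set of $\wp$, so one cannot directly invoke a local-isometry identity. I expect this to be the technically delicate piece. Two routes present themselves: (i) shrink $B_{t,k}$ to a small sub-ball centred at a point $z_0$ chosen via the open mapping theorem applied to $\psi_{t,k}^{\alpha_t}$ so that $\Psi_t(z_0)$ shifts $h_{t-1}^\star(z_0)$ to a point avoiding the critical locus of $\wp$; continuity of $\vartheta$ then yields a uniform bound $\vartheta(\tg_t(z)) \geqslant \kappa > 0$ on the sub-ball, reducing matters to the previous argument; (ii) exploit that the highly oscillatory map $\pi\circ\tg_t$ covers $E$ with approximately uniform large multiplicity, so that by $\int_E \wp^*\omega_\FS = 2 = (2/\varrho)\int_E \omega_E$ one obtains $\int_{B_{t,k}}(\wp\circ\tg_t)^*\omega_\FS \gtrsim (2/\varrho)\int_{B_{t,k}}(\pi\circ\tg_t)^*\omega_E$ modulo boundary errors negligible as $\alpha_t\to\infty$, reducing to the first inequality.
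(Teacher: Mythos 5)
Your proof of the first inequality is correct and is essentially what the paper has in mind when it dismisses that case as ``simpler'': since $\pi:\C\to E$ is a local isometry, $(\pi\circ\wth_t)^*\omega_E=|\wth_t'|^2\cdot\frac{i}{2}\,\dd z\wedge\dd\bar z$, so a pointwise lower bound on $|\wth_t'|$ on a fixed ball $B_{t,j}$, squared and integrated, gives the result directly. For the second inequality, however, you correctly diagnose the obstacle --- the density $\vartheta$ in $\wp^*\omega_\FS=\vartheta\cdot\frac{i}{2}\,\dd z\wedge\dd\bar z$ is nonconstant and vanishes on the critical set of $\wp$ --- but neither of your proposed routes closes it. Route (i) fails quantitatively: because $|\tg_t'|\gtrsim q_{t,k}\alpha_t(m^{3/4})^{\alpha_t-1}$ on $B_{t,k}$, a uniform-continuity argument for $\vartheta\circ\tg_t$ forces the sub-ball radius to shrink like $1/\big(q_{t,k}\alpha_t(m^{3/4})^{\alpha_t-1}\big)$, and the area of that sub-ball then exactly cancels the $|\tg_t'|^2$ factor, leaving only an $O_t(1)$ lower bound instead of the required $q_{t,k}^2\alpha_t^2(m^{3/2})^{\alpha_t-1}$. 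Route (ii) is morally the right idea, but the claims that $\pi\circ\tg_t$ covers $E$ with approximately uniform multiplicity, and that the boundary error is negligible, are precisely what needs proof; you flag it as the ``technically delicate piece'' and leave it open.

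The missing ingredient is Bloch's Theorem. The paper picks a point $z_k\in\D_{R_t}\cap\D(\ty_{t,k},1)$ with $\psi_{t,k}'(z_k)\neq 0$ and $|\psi_{t,k}(z_k)|>m^{3/4}$, derives the derivative bound $|\tg_t'(z_k)|\gtrsim q_{t,k}\alpha_t(m^{3/4})^{\alpha_t-1}$ (the step you already have), and then applies Bloch's Theorem on $\D(z_k,r_k)$ with $r_k=(R_t-|z_k|)/2$: the image $\tg_t\big(\D(z_k,r_k)\big)$ contains a Euclidean disc of radius $\gtrsim r_k\,|\tg_t'(z_k)|$, hence $\gtrsim q_{t,k}^2\alpha_t^2(m^{3/2})^{\alpha_t-1}$ fundamental domains of $\Gamma$. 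Each such fundamental domain contributes a fixed positive amount $\int\wp^*\omega_\FS>0$, and since $\tg_t$ is holomorphic one has $\int_{\D(z_k,r_k)}(\wp\circ\tg_t)^*\omega_\FS\geqslant\int_{\tg_t(\D(z_k,r_k))}\wp^*\omega_\FS$, which yields the lower bound with no boundary term at all and no need to worry about the zeros of $\vartheta$. This single covering-theoretic step is what makes Route (ii) rigorous, and it is absent from your write-up.
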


\begin{proof}
We only prove the second inequality, since the first one is simpler. Recall

\smallskip\noindent
{\bf Bloch's Theorem}
(\cite[p.~295]{Conway}){\bf .}\quad Let $g$ be a holomorphic function on a region containing $\D(z, r)$. Then $g(\D(z, r))$ contains a disc of radius $r|g'(z)|/72$.

\smallskip

For each $k=1,  \dots, Q_{\mathfrak{m}}$,
we can select some point $z_k\in \D_{R_t}\cap \D(\ty_{t, k}, 1)$ with
$\psi_{t, k}'(z_k)\neq 0$ and $|\psi_{t, k}(z_k)|>m^{3/4}$.
Since $|\psi_{t, k'}|<1 $ for other $k'\neq k$ on $\D(\ty_{t, k}, 1)$, we have
\[
|\widehat{g}_t\,'(z_k)|
\geqslant 
|q_{t, k} \cdot \alpha_t \, \psi_{t, k}'(z_k)  \,
\psi_{t, k}^{\alpha_t-1} (z_k)|
-
O_t(1)
\gtrsim
q_{t, k} \cdot \alpha_t
(m^{3/4})^{\alpha_t-1}.
\]
Let $r_k:= (R_t- |z_k|)/2$.
By Bloch's theorem, 
$\widehat{g}_t\big(\D(z_k,r_k )\big)$ contains 
a disc of radius
$\gtrsim  q_{t, k} \cdot \alpha_t
(m^{3/4})^{\alpha_t-1}$.
In particular, this disc strictly contains
at least
$\gtrsim  q_{t, k}^2 \cdot \alpha_t^2
(m^{3/2})^{\alpha_t-1}$ copies of fundamental domains of the lattice $\Gamma$, while each copy contributes some constant positive area  with respect to $\wp^* \omega_{\FS}$.
Thus we have
\begin{equation}
    \label{clever use of Bloch's theorem}
\int_{\D(z_k,r_k )}\,
(\wp\circ\widehat{g}_t)^*\,\omega_{\FS}
\geqslant
\int_{\tg_t(\D(z_k, r_k))}\,
\wp^*\,\omega_{\FS}
\gtrsim
q_{t, k}^2 \cdot \alpha_t^2
(m^{3/2})^{\alpha_t-1}.
\end{equation}
This ends the proof.
\end{proof}

\smallskip

\begin{proof}[Proof of the second inequality of \eqref{length-area conditions all-shape}]   
Recall the definition
$$
\fL_{F_t}(R_t) := \int_0^{R_t}\Length_{\omega} \big( F_t(\partial \D_r)\big)\,\frac{\dd r}{ r}\quad\text{and}\quad  
\fA_{F_t}(R_t) := \int_0 ^{R_t}  \Area_\omega \big(F_{t}(\D_r)\big)
\,\frac{\dd r}{ r}.$$ 
By the same argument as~\eqref{length-upper-bound-shape}, for every $r\leqslant R_t$, we have
$$
\Length_{\omega} \big( F_t(\partial \D_r)\big) \lesssim r\cdot \alpha_t  m^{\alpha_t-1}m'.
$$
Hence
$$\fL_{F_t}(R_t) \lesssim   \int_0^{R_t} r\cdot \alpha_t  m^{\alpha_t-1}m'\,\frac{\dd r}{ r} 
 \lesssim  
 R_t\cdot  \alpha_t  m^{\alpha_t-1}m'.  $$

Now we bound $\fA_{F_t}(R_t)$ from below. Take a $k_0$ such that $q_{t, k_0} \cdot\alpha_{k_0}\geqslant 1$. Let $z_{k_0}, r_{k_0}$ be chosen as in the proof of  Lemma \ref{lem-area-lower-bound-shpae}. Set 
$R_t^-: = |z_{k_0}|+r_{k_0} < R_t$. 
By~\eqref{clever use of Bloch's theorem}, 
we have
$ \Area_\omega \big(F_{t}(\D_r)\big)\gtrsim (m^{3/2} )^{\alpha _{t}-1}$
for all $ r\in [R_t^-, R_t]$. 
Hence 
\begin{equation}\label{lower-bound-N}
\fA_{F_t}(R_t) \geqslant \int_{R_t^-}^{R_t}  \Area_\omega \big(F_{t}(\D_r)\big)
\,\frac{\dd r}{ r} \gtrsim  (m^{3/2} )^{\alpha _{t}-1}.
\end{equation}
Thus
we conclude that 
 $$  \frac{\fL_{F_t}(R_t) }{\fA_{F_t}(R_t)}\leqslant O_t(1)
 \cdot
 {\alpha _t 
 m^{\alpha _t-1}m' \over (m^{3/2} )^{\alpha_{t}-1}},   $$
 which tends to $0$ as  $\alpha_t\to \infty$. This concludes the proof.
\end{proof}

\smallskip

\begin{proof}[Proofs of \eqref{I=0, Ahfors diffuse is trivial, estimate} and \eqref{I=0, Nevanlinna diffuse is trivial, estimate}]
If $z\in \D(\tx_{t,j},1)$ for some $j$, then $|\psi_{t,k}(z)|\leqslant m_0<1$ for every $k$, since
$\overline{\mathbb{D}}(\ty_{t,k}, 1)$ and $\overline{\mathbb{D}}(\tx_{t, j}, 1)$ are disjoint by our construction. This gives
$$|\tg_t (z)- g_{t-1}^\star (z)|\leqslant \sum_{k=1}^{Q_{\mathfrak{m}}} q_{t,k} \big|\psi_{t,k}(z)\big|^{\alpha _t}\ll 1  \quad\text{as}\quad \alpha_t\gg 1.  $$ 
This combining with \eqref{dist-x-h} yields
$\dist_{\omega_{\FS}} \big( x_j,  \wp \circ \tg_t(z) \big)< 2\delta_t$.  Similarly, for $z\in \D(\ty_{t,k} , 1)$, we have $\dist_{\omega_E} \big( y_k,  \pi \circ \wth_t(z) \big)< 2\delta_t$ from \eqref{dist-y-g}. Therefore, 
\begin{equation}\label{D-x-1-subset-x-delta-t}
F_t\big(\D(\tx_{t,j} , 1)\big)\subset \D(x_j,2\delta_t)\times E , \quad     F_t\big(\D(\ty_{t,k} , 1)\big)\subset \C\P^1 \times \D(y_k,2\delta_t) .
\end{equation}
It follows that
$$ \Area_{\omega}\big(F_t(\D_{R_t})\setminus (U_t\cup V_t)\big) \leqslant \Area_{\omega} \big( F_t\big(  \D_{R_t} \setminus  ( \widehat U_t \cup \widehat V_t ) \big)      \big). $$
Observe from \eqref{bound-Phi-Psi} that, for $z\notin \widehat U_t \cup \widehat V_t$, both $|\tg_t'|$ and $|\wth_t'|$ are  $O_t(1)$.  Thus, \eqref{formula-F-omega} implies that  
$\Area_{\omega} \big( F_t\big(  \D_{R_t} \setminus  ( \widehat U_t \cup \widehat V_t ) \big)      \big)  \leqslant O_t(1)$.
Plugging in  Lemma \ref{lem-area-lower-bound-shpae}, we conclude that 
$$ { \Area_{\omega}(F_t(\D_{R_t})\setminus (U_t \cup V_t)) \over \Area_{\omega}(F_t(\D_{R_t}))  }  \lesssim { O_t(1)\over  (m^{3/2} )^{\alpha _{t}-1}  }
.$$
Whence~\eqref{I=0, Ahfors  diffuse is trivial, estimate} follows.

\smallskip

Repeating the same argument, we can likewise obtain that
$$\Area_{\omega}\big(F_t(\D_r)\setminus (U_t \cup V_t)\big) \leqslant r^2\cdot O_t(1).$$
Hence
$$\int_{0} ^{R_t}  
 \Area_{\omega}\big(F_t(\D_r)\setminus (U_t \cup V_t)\big) \, \frac{\dd r}{ r} \leqslant \int_{0} ^{R_t} r\cdot O_t(1)    \,\dd r \leqslant O_t(1). $$
By \eqref{lower-bound-N}, we finish the proof of \eqref{I=0, Nevanlinna diffuse is trivial, estimate}.
\end{proof}

  It remains to prove Proposition \ref{Area distribution}. To achieve this, we state a lemma first. Define
$$\kappa_t:=\int_{ \D( \tx_{t,1},1)\cap \D_{R_t}} \big(\varphi_{t,1}^{\alpha_t} \big)^* (\pi^* \omega_E ) \quad \text{and}\quad \kappa_t^\oN:=   \int_0^{R_t}  \int_{ \D( \tx_{t,1},1)\cap \D_{r}} \big(\varphi_{t,1}^{\alpha_t} \big)^* (\pi^* \omega_E )  \,{\dd r\over r} .  $$

\begin{lemma} \label{lem-area-equal}
There exists a  constant  $c>0$ independent of $t\geqslant 1$, such that for every $j,k$,
$$\Ac_{t,j}=p_{t,j}^2 \cdot \kappa_t +O_t(1) m^{\alpha_t-1}, \quad \Bc_{t,k}=c \,q_{t,k}^2 \cdot \kappa_t +O_t(1) m^{\alpha_t-1},$$
$$\Ac_{t,j}^\oN=p_{t,j}^2 \cdot \kappa_t^\oN +O_t(1) m^{\alpha_t-1}, \quad \Bc_{t,k}^\oN=c \,q_{t,k}^2 \cdot \kappa_t^\oN +O_t(1) m^{\alpha_t-1}.$$
\end{lemma}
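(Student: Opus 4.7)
\emph{Proof plan.}
The idea is to localize the computation of $\Ac_{t,j}, \Bc_{t,k}, \Ac_{t,j}^\oN, \Bc_{t,k}^\oN$ onto each of the pairwise disjoint discs $\D(\widehat x_{t,j},1)\cap\D_{R_t}$ and $\D(\widehat y_{t,k},1)\cap\D_{R_t}$, and on each such disc to isolate a single dominant oscillatory term. The crucial geometric input is that on $\D(\widehat x_{t,j},1)\cap\D_{R_t}$ one has $|\varphi_{t,j'}|,|\psi_{t,k'}|\leqslant m_0<1$ for all $j'\neq j$ and all $k'$ (by pairwise disjointness of the closed unit discs), and symmetrically on $\D(\widehat y_{t,k},1)\cap\D_{R_t}$. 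Combined with the decomposition~\eqref{formula-F-omega} of $\widehat F_t^*\omega$ as $\vartheta(\widehat g_t)|\widehat g_t'|^2\,i\,\dd z\wedge\dd\bar z+|\widehat h_t'|^2\cdot\tfrac{i}{2}\dd z\wedge\dd\bar z$, this pins down the dominant term on each disc.

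For $\Ac_{t,j}$, on $\D(\widehat x_{t,j},1)\cap\D_{R_t}$ the $\vartheta(\widehat g_t)|\widehat g_t'|^2$-part contributes only $O_t(1)$, since $|\widehat g_t'|=O_t(1)$ (all $\psi_{t,k'}$ being small) and $\vartheta$ is bounded. Expanding $|\widehat h_t'|^2=|(h_{t-1}^\star)'+\Phi_t'|^2$, the dominant contribution $p_{t,j}^2\alpha_t^2|\varphi_{t,j}|^{2\alpha_t-2}|\varphi_{t,j}'|^2$ comes from the $j$-th summand of $\Phi_t'$; the cross term with $(h_{t-1}^\star)'$ integrates to an error absorbed by $O_t(1)m^{\alpha_t-1}$, while the mixed terms among distinct $\varphi_{t,j'}$'s carry an extra factor $m_0^{\alpha_t-1}$ and are negligible. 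The rotational identity $\varphi_{t,j}(z)=\varphi_{t,1}(e^{i\theta_{t,j}}z)$ maps $\D(\widehat x_{t,j},1)\cap\D_{R_t}$ isometrically onto $\D(\widehat x_{t,1},1)\cap\D_{R_t}$ and pulls $(\varphi_{t,j}^{\alpha_t})^*\pi^*\omega_E$ back to $(\varphi_{t,1}^{\alpha_t})^*\pi^*\omega_E$, so the leading integral equals exactly $p_{t,j}^2\kappa_t$.

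The case of $\Bc_{t,k}$ is parallel but carries the real difficulty: the dominant contribution is now $\vartheta(\widehat g_t)|\widehat g_t'|^2$, and as $\widehat g_t$ sweeps over many fundamental domains of $\Gamma$ the factor $\vartheta(\widehat g_t)$ cannot be removed pointwise. \emph{This is the main obstacle: producing a constant $c$ that is independent of $t$, $k$ and of $q_{t,k}$ is precisely what powers Proposition~\ref{Area distribution}.} I would exploit the factorization $\wp=\widetilde\wp\circ\pi$ with $\widetilde\wp:E\to\C\P^1$ of degree $2$: both $\widetilde\wp^*\omega_\FS$ and $(2/\varrho)\omega_E$ are smooth nonnegative $(1,1)$-forms on the torus $E$ of total mass $2$, so the $\ddbar$-lemma provides a smooth function $V$ on $E$ with $\widetilde\wp^*\omega_\FS=(2/\varrho)\omega_E+\ddc V$. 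Pulling back by $\pi\circ\widehat g_t$ and applying Stokes on $\Omega:=\D(\widehat y_{t,k},1)\cap\D_{R_t}$ gives
\[
\int_{\Omega}(\wp\circ\widehat g_t)^*\omega_\FS=\tfrac{2}{\varrho}\int_{\Omega}|\widehat g_t'|^2\cdot\tfrac{i}{2}\,\dd z\wedge\dd\bar z+\int_{\partial\Omega}\dc(V\circ\pi\circ\widehat g_t),
\]
and the boundary correction is controlled by $\|V\|_{\Cc^1(E)}$ times the length of $\widehat g_t$ on $\partial\Omega$, which is absorbed into $O_t(1)m^{\alpha_t-1}$ by a direct variant of~\eqref{length-upper-bound-shape}. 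The interior integral is then handled exactly as for $\Ac_{t,j}$ by isolating the $\psi_{t,k}$-term and invoking the rotational identity for $\psi_{t,k}$; this yields $q_{t,k}^2\kappa_t+O_t(1)m^{\alpha_t-1}$. Adding the bound $O_t(1)$ for the $(\pi\circ\widehat h_t)^*\omega_E$ contribution, we conclude $\Bc_{t,k}=c\,q_{t,k}^2\kappa_t+O_t(1)m^{\alpha_t-1}$ with $c=2/\varrho$, manifestly independent of $t$ and $k$. The Nevanlinna counterparts $\Ac_{t,j}^\oN,\Bc_{t,k}^\oN$ follow from the same estimates after prepending $\int_0^{R_t}\dd r/r$ throughout, with $\kappa_t$ replaced by $\kappa_t^\oN$.
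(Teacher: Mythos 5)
Your proposal is correct and follows essentially the same route as the paper: localize to the disjoint unit discs (where only one of the $\varphi_{t,j}$ or $\psi_{t,k}$ is large), extract the dominant term, invoke the rotational identity to match $\kappa_t$, and transfer $\wp^*\omega_\FS$ to a multiple of $\pi^*\omega_E$ via cohomology on $E$ plus Stokes, with $c=2/\varrho$ coming from the degree-$2$ map $\widetilde\wp$. The one cosmetic difference is that you invoke the $\partial\bar\partial$-lemma to write $\widetilde\wp^*\omega_\FS=(2/\varrho)\omega_E+\ddc V$ for a smooth function $V$, whereas the paper uses the weaker fact $\dim_\R H^2(E,\R)=1$ to write $\widetilde\wp^*\omega_\FS=c\,\omega_E+\dd\theta$ for a smooth $1$-form $\theta$; both yield a Stokes boundary term of the same size, so the two are interchangeable here.

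One small caution, affecting both your write-up and the paper's: the cross term $(h_{t-1}^\star)'\,\overline{\Phi_t'}$ and the Stokes boundary term actually carry an extra factor of $\alpha_t$ (since $|\Phi_t'|,|\Psi_t'|\lesssim\alpha_t\,m^{\alpha_t-1}m'$), so the error is really of order $O_t(1)\,\alpha_t\,m^{\alpha_t-1}$ rather than $O_t(1)\,m^{\alpha_t-1}$ in the strict sense of the paper's $O_t(1)$ convention. This is harmless because what is used downstream is that the error is negligible against $\kappa_t,\kappa_t^\oN\gtrsim\alpha_t^2(m^{3/2})^{\alpha_t-1}$, but it is worth making the exponent bookkeeping explicit so the statement reads correctly.
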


\begin{proof}
We prove the equalities for $\Ac_{t,j}$ and $\Bc_{t,k}$. The others are similar.
 Using \eqref{condition-varphi} and \eqref{condition-psi}, and considering the order of $m$,  we  obtain
$$ \Ac_{t,j}=\int_{ \D( \tx_{t,j},1)\cap \D_{R_t}} \big( p_{t,j} \varphi_{t,j}^{\alpha_t} \big)^* (\pi^* \omega_E)+O_t(1) m^{\alpha_t- 1} $$
$$\Bc_{t,k}=\int_{\D(\widehat y_{t,k},1) \cap \D_{R_t}} \big( q_{t,k} \psi_{t,k}^{\alpha_t} \big)^* (\wp^* \omega_{\FS})   +O_t(1) m^{\alpha_t- 1} .$$

Since $\widetilde \wp^*\omega_{\FS}$ and $\omega_E$ are real closed $2$-forms on $E$, by the fact $\dim_{\R}H^2(E, \mathbb{R})=1$ of the top degree de Rham cohomology group of $E$, there exists a constant $c>0$ and a smooth $1$-form $\theta$ such that $\widetilde \wp^*\omega_{\FS}= c\, \omega_E + \dd \theta$. 
Here $c$ is determined by the area relation 
$\int_{E}\widetilde \wp^*\omega_{\FS}=c\cdot\int_{E}\omega_E$. Since $\widetilde{\wp}$ is a $2 : 1$ ramified covering,
$\int_E\widetilde \wp^*\omega_{\FS}=2\int_{\C\P^1}\omega_{\FS}=2$. Thus $c=2/\varrho$. 
Using that $\wp=\widetilde \wp \circ \pi$, we have 
\begin{equation}
    \label{Duval's trick}
\wp^*\omega_\FS= \pi^* (\widetilde \wp^*\omega_\FS)=\pi^*(c\,\omega_E +\dd \theta).
\end{equation}
Hence
\begin{equation}
\label{Duval's trick 2}
    \Bc_{t,k}=c \int_{\D(\widehat y_{t,k},1) \cap \D_{R_t}} \big( q_{t,k} \psi_{t,k}^{\alpha_t} \big)^* (\pi^* \omega_E) +\int_{\D(\widehat y_{t,k},1) \cap \D_{R_t}} \big( q_{t,k} \psi_{t,k}^{\alpha_t} \big)^* (\pi^* \dd \theta)  +O_t(1) m^{\alpha_t- 1}.
\end{equation}
Denote the middle term by $\eta_{t,k}$.
Using again that $\varphi_{t, j}, \psi_{t,k}$
are obtained by composing $\varphi_{t, 1}$ with some rotation, together with the fact that $\pi^*\omega_E$ is a constant $(1,1)$-form, we see that 
$$\Ac_{t,j}= p_{t,j}^2\cdot \kappa_t +O_t(1) m^{\alpha_t- 1} \quad \text{and}\quad \Bc_{t,j} =q_{t,k}^2 \cdot\kappa_t+\eta_{t,k} +O_t(1) m^{\alpha_t- 1}.$$
The proof will be done if we can show that $\eta_{t,k} =O_t(1) m^{\alpha_t-1}$. This just follows by using Stoke's formula and the same proof as~\eqref{length-upper-bound-shape}.
\end{proof}

\begin{proof}[Proof of Proposition~\ref{Area distribution}] Borrow the constant $c$ in Lemma \ref{lem-area-equal}. Take a sufficiently small positive number $c'$ such that all the parameters
\[
p_{t, j}
=
c'
\cdot
\sqrt{A_j},  
\quad
q_{t, k}
=
c'
\cdot
\sqrt{B_k/c}
\]
are in the interval $(0, 1)$.
By Lemma \ref{lem-area-equal}, the Proposition  follows from $\kappa_t, \kappa_t^\oN \gtrsim m^{\alpha_t}$, which is a corollary of~\eqref{clever use of Bloch's theorem} and \eqref{lower-bound-N}.
\end{proof}

\subsection{Proof of Theorem \ref{thm-shape}}

Read $\bA_\fm= (A_1^{[\fm]},\dots, A_{P_\fm}^{[\fm]})$ and $\bB_\fm=(B_1^{[\fm]},\dots, B_{Q_\fm}^{[\fm]})$.

\begin{proposition} \label{prop-shape}
For any $\mathfrak{m}\geqslant 1$, 
there exists an increasing sequence $\{t_s\}_{s\geqslant 1}$ in $\Z_+$ such that the sequence of holomorphic discs $\{\mathbf F \,\lvert_{\overline \D_{R_{t_s}}}\}_{s \geqslant 1}$ produces  Nevanlinna/Ahlfors current 
\begin{equation*}\label{alg-E-R}
\oT= \sum_{j=1}^{P_{\mathfrak{m}}} {A^{[\mathfrak{m}]}_j \over \varrho} \cdot [\mathsf{E}_j]
+\sum_{k=1}^{Q_{\mathfrak{m}}} B^{[\mathfrak{m}]}_k \cdot [\mathsf{R}_k],
\end{equation*}
where $\mathsf{E}_j:=\pi_1^{-1}(x_j)$ are elliptic curves, and where
$\mathsf{R}_k:=\pi_2^{-1}(y_k)$ are  rational curves.
\end{proposition}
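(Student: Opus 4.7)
The plan is to apply the classical weak-compactness plus Siu-decomposition strategy to a carefully chosen subsequence. Let $\sigma^{-1}(\mathfrak m)=\{t_{\mathfrak m,s}\}_{s\geqslant 1}$ be listed in increasing order. By conditions $(\spadesuit)$ and $(\diamondsuit)$ the sequence $\{\mathbf F\,\lvert_{\overline{\D}_{R_{t_{\mathfrak m,s}}}}\}_{s\geqslant 1}$ satisfies both length-area conditions~\eqref{length-area condition} and~\eqref{length-area-condition-Ahlfors}. Hence by Banach--Alaoglu I can pass to a further subsequence $\{t_s\}\subset \{t_{\mathfrak m,s}\}$ along which the normalized Nevanlinna and Ahlfors currents converge weakly to closed positive $(1,1)$-currents $\oT^{\oN}$ and $\oT^{\oA}$ respectively. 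The argument below is identical in both cases, so I focus on the Ahlfors case and write $\oT=\oT^{\oA}$.

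The next step is to identify the support of $\oT$. Since $\delta_t<2^{-t}\to 0$, the open neighborhoods $U_t\cup V_t$ shrink to $\bigcup_{j=1}^{P_{\mathfrak m}}\mathsf E_j\cup \bigcup_{k=1}^{Q_{\mathfrak m}}\mathsf R_k$. Applying the Ahlfors estimate~\eqref{I=0, Ahfors diffuse is trivial, estimate} (via $(\diamondsuit)$) to any compact set $K$ disjoint from this union yields $\oT(K)=0$. Therefore $\supp\oT\subset \bigcup_j\mathsf E_j\cup\bigcup_k\mathsf R_k$; since a closed positive $(1,1)$-current supported on a finite union of irreducible analytic curves must be a non-negative linear combination of the currents of integration along those curves, Siu's decomposition forces the diffuse part to vanish and gives
\[
\oT\,=\,\sum_{j=1}^{P_{\mathfrak m}} c_j\cdot [\mathsf E_j]\,+\,\sum_{k=1}^{Q_{\mathfrak m}} d_k\cdot [\mathsf R_k]
\]
for some $c_j,d_k\geqslant 0$.

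To pin down the coefficients, I test $\oT$ against $\chi_j\cdot \omega$, where $\chi_j$ is a smooth cutoff equal to $1$ on the tube $\pi_1^{-1}(\D(x_j,\delta))$ and supported in $\pi_1^{-1}(\D(x_j,2\delta))$, with $\delta$ taken so small that this tube meets no other $\mathsf E_{j'}$ nor any $\mathsf R_k$. On the one hand, $\lp \oT,\chi_j\omega\rp= c_j\cdot \int_{\mathsf E_j}\omega= c_j\cdot \varrho$. On the other hand, for $t_s$ so large that $2\delta_{t_s}<\delta$, the inclusion~\eqref{D-x-1-subset-x-delta-t} gives $F_{t_s}(\D(\widehat x_{t_s,j},1))\subset \pi_1^{-1}(\D(x_j,2\delta_{t_s}))\subset\supp\chi_j$, while on $\overline{\D}_{R_{t_s}}\setminus\bigl(\bigcup_j\D(\widehat x_{t_s,j},1)\cup\bigcup_k\D(\widehat y_{t_s,k},1)\bigr)$ both $|\varphi_{t_s,\bullet}|,|\psi_{t_s,\bullet}|\leqslant m_0<1$, so~\eqref{formula-F-omega} and~\eqref{bound-Phi-Psi} give an area contribution of only $O_{t_s}(1)$. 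Combined with Lemma~\ref{lem-area-lower-bound-shpae}, which yields $\Area_\omega(F_{t_s}(\D_{R_{t_s}}))\gtrsim (m^{3/2})^{\alpha_{t_s}-1}$, this shows $\sum_j \Ac_{t_s,j}+\sum_k\Bc_{t_s,k}=\Area_\omega(F_{t_s}(\D_{R_{t_s}}))\bigl(1+o(1)\bigr)$. Invoking Proposition~\ref{Area distribution} then forces $\Ac_{t_s,j}/\Area_\omega(F_{t_s}(\D_{R_{t_s}}))\to A_j^{[\mathfrak m]}$, whence $c_j\varrho=A_j^{[\mathfrak m]}$; symmetrically $d_k=B_k^{[\mathfrak m]}$. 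Replacing $\Ac,\Bc$ by $\Ac^{\oN},\Bc^{\oN}$ and using~\eqref{I=0, Nevanlinna diffuse is trivial, estimate} handles the Nevanlinna case identically.

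The main obstacle is the third step: transporting the ratio information, which Proposition~\ref{Area distribution} delivers on the domain-side discs $\D(\widehat x_{t,j},1)$ and $\D(\widehat y_{t,k},1)$, into mass information for the limiting current $\oT$ on arbitrarily thin target tubes around each $\mathsf E_j$ and $\mathsf R_k$. The three ingredients that make this bridge work --- the target inclusion~\eqref{D-x-1-subset-x-delta-t}, the negligibility of the image area off the domain discs, and the shrinking of $\delta_t$ to zero --- must be combined carefully, but once in place the identification of $c_j$ and $d_k$, and the sanity check $\sum_j c_j\varrho+\sum_k d_k=\sum_j A_j^{[\mathfrak m]}+\sum_k B_k^{[\mathfrak m]}=1$ consistent with the unit-mass normalization, are routine.
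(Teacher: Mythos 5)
Your overall architecture --- pass to the $\sigma^{-1}(\mathfrak m)$ subsequence, use $(\spadesuit)$ to extract a weak limit $\oT$, use $(\heartsuit)$ with $\delta_t\to 0$ to concentrate $\oT$ on $\bigcup_j\mathsf E_j\cup\bigcup_k\mathsf R_k$, and use Proposition~\ref{Area distribution} to fix coefficients --- matches the paper's strategy. Your intermediate step, invoking the support theorem to get $\oT=\sum_j c_j[\mathsf E_j]+\sum_k d_k[\mathsf R_k]$ before touching coefficients, is a small reorganization the paper does not make explicit (the paper goes directly from mass lower bounds to the formula), but it is sound.

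The coefficient-identification step, however, has a genuine gap. You write that the cutoff $\chi_j$ is supported in $\pi_1^{-1}(\D(x_j,2\delta))$ ``taken so small that this tube meets no other $\mathsf E_{j'}$ nor any $\mathsf R_k$.'' This is impossible: $\pi_1^{-1}(\D(x_j,2\delta))=\D(x_j,2\delta)\times E$ intersects every rational fiber $\mathsf R_k=\C\P^1\times\{y_k\}$ in $\D(x_j,2\delta)\times\{y_k\}$, for any $\delta>0$. Hence $\lp\oT,\chi_j\omega\rp=c_j\varrho+\sum_k d_k\int_{\mathsf R_k}\chi_j\,\pi_1^*\omega_\FS$, not $c_j\varrho$. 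Symmetrically, on the dynamical side, the contributions to $\oA_{t_s}(\chi_j\omega)$ from the discs $\D(\ty_{t_s,k},1)$ are not zero: there $\wp\circ\widehat g_{t_s}$ sweeps out all of $\C\P^1$ with large multiplicity, and some of that mass lands in $\D(x_j,2\delta)\times E=\supp\chi_j$; quantifying this would require an Ahlfors-type equidistribution estimate that you do not supply. So the deduction ``whence $c_j\varrho=A_j^{[\mathfrak m]}$'' does not follow as written.

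The repair is exactly the mechanism the paper uses, and which you relegate to a ``sanity check'' rather than an essential step. From $\mathbf F(\D(\tx_{t_s,j},1))\subset\{\chi_j=1\}$ you get only the one-sided bound $\lp\oT,\chi_j\omega\rp\geqslant\lim_s\Ac_{t_s,j}/\Area_\omega(\mathbf F(\D_{R_{t_s}}))=A_j^{[\mathfrak m]}$, while the explicit form of $\oT$ gives $\lp\oT,\chi_j\omega\rp\leqslant c_j\varrho+O(\delta)$; letting $\delta\to 0$ yields $c_j\varrho\geqslant A_j^{[\mathfrak m]}$ and symmetrically $d_k\geqslant B_k^{[\mathfrak m]}$. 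Since $\oT$ has total mass $1$ and does not charge the points $\{x_j\}\times\{y_k\}$, one has $\sum_j c_j\varrho+\sum_k d_k=1=\sum_j A_j^{[\mathfrak m]}+\sum_k B_k^{[\mathfrak m]}$, which forces all inequalities to be equalities. In short: your two-sided identity must be replaced by a one-sided inequality closed by the unit-mass normalization; otherwise the uncontrolled contributions from the $\mathsf R_k$ (and dually the $\mathsf E_j$) regions leave the argument open.
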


\begin{proof}
Since $\widehat{\mathbf F}=\lim_{j\rightarrow \infty} \widehat F_j$,
for every $t\geqslant 1$, on $\D_{R_t+1}$ we have
$$
\|\widehat{\mathbf F}-F_t\|
\leqslant
\sum_{j\geqslant t}\,
\|F_{j+1}-F_j\|
\leqslant
\sum_{j\geqslant t}\,
2^{-j-1}\cdot \epsilon_{j}
<
\sum_{j\geqslant t}\,
2^{-j-1}\cdot \epsilon_{t}
<
\epsilon_{t}.
$$
Hence by the stability condition ($\diamondsuit$), 
the entire curve $\mathbf F=\Pi\circ \widehat{\mathbf F}$ satisfies all the estimates in Propositions \ref{key-prop-shape} and \ref{Area distribution}  after replacing $2^{-t}$ by $2^{-t+1}$.

\smallskip
Write  elements of $\sigma^{-1}(\fm)\subset \mathbb{Z}_+$ in the increasing order as $\{t_{ s}\}_{s\geqslant 1}$. 
Due to the length-area estimate~\eqref{length-area conditions all-shape}, after passing to certain subsequence of  $\{\mathbf F \,\lvert_{ \overline \D_{R_{t_{ s}}}}\}_{s\geqslant 1}$,
we can obtain some Nevanlinna/Ahlfors current $\mathcal{T}$. These two currents happen to coincide. We only prove the Ahlfors part, while the argument works for Nevanlinna part as well.
By definition, the mass of $\oT$ on $\mathsf{E}_{j_0}=\{x_{j_0} \} \times E$ is equal to
$$\int_{\{x_{j_0}\} \times E} \oT \wedge \omega= \lim_{s\to\infty } \int_{ \D (x_{j_0}, 2\delta_{t_{ s}}) \times E } \oT \wedge \omega   =\lim_{s\to\infty} {\Area_{\omega} (\mathbf F[ \mathbf F^{-1}( \D (x_{j_0}, 2\delta_{t_{s}}) \times E  )     ] ) \over \Area_{\omega}(\mathbf F(\D_{R_{t_{ s}}}))       } $$
Recall~\eqref{D-x-1-subset-x-delta-t} that $\mathbf F\big(\D(\tx_{t_{ s},j_0} , 1 )\big)\subset \D(x_{j_0},2\delta_{t_{ s}})\times E $. It follows that 
$$ 
\int_{\{x_{j_0}\} \times E} \oT \wedge \omega
\geqslant 
\lim_{s\to\infty}  {\Ac_{t_s, j_0} \over \Area_{\omega}(\mathbf F(\D_{R_{t_{ s}}})) } 
= 
\frac{A^{[\mathfrak{m}]}_{j_0}}{\sum_{j=1}^{P_{\fm}} A^{[\mathfrak{m}]}_{j}
+\sum_{k=1}^{Q_{\fm}} B^{[\mathfrak{m}]}_{k}
}
$$
by Proposition \ref{Area distribution} and~\eqref{I=0, Ahfors diffuse is trivial, estimate}.
Similarly, the mass of $\oT$ on $\mathsf{R}_{k_0}=\C\P^1 \times \{y_{k_0}\}$ is
$$
\int_{\C\P^1 \times \{y_{k_0}\}} \oT \wedge \omega 
\geqslant 
\frac{B^{[\mathfrak{m}]}_{k_0}}{\sum_{j=1}^{P_{\fm}} A^{[\mathfrak{m}]}_{j}
+\sum_{k=1}^{Q_{\fm}} B^{[\mathfrak{m}]}_{k}
}.$$
Since the total mass  of $\oT$ is  $1$, while $\oT$ cannot charge positive mass on points $\{x_{j_0}\}\times\{y_{k_0}\}$,   
the above inequalities must be exactly equalities. Hence we conclude the proof.
\end{proof}

\begin{lemma}
\label{A-C trick again}
    Let $f: \mathbb{C}\rightarrow X$ be an entire curve which produces Nevanlinna/Ahlfors currents $\{\oT_s\}_{s=1}^\infty$ by $\{f\,\lvert _{\overline{\D}_r}\}_{r>0}$. If
     $\{\oT_s\}_{s=1}^\infty$ converges weakly to a current $\oT$,
   then $\oT$ is also a Nevanlinna/Ahlfors current generated  by $\{f\,\lvert _{\overline{\D}_r}\}_{r>0}$.
\end{lemma}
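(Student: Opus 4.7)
The plan is a standard diagonal extraction, leveraging the compactness and metrizability of the space of mass-one positive currents of bidimension $(1,1)$ on the compact manifold $X$ in the weak topology. By hypothesis, for each $s\geqslant 1$ there is an increasing sequence of radii $\{r_{s,j}\}_{j\geqslant 1}$ tending to infinity such that the family $\{f\,\lvert_{\overline{\D}_{r_{s,j}}}\}_{j\geqslant 1}$ satisfies the length-area condition~\eqref{length-area condition} (or~\eqref{length-area-condition-Ahlfors} in the Ahlfors case), and such that the associated sequence of normalized currents converges weakly to $\oT_s$.

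Fix a metric $d$ on the set of positive bidimension $(1,1)$ currents of mass $1$ on $X$ which is compatible with the weak topology. I would then inductively select indices $j_s$, writing $\oN_s$ for the normalized current associated with $f\,\lvert_{\overline{\D}_{r_{s,j_s}}}$, so that the three conditions
\[
r_{s,j_s}\,>\,r_{s-1,j_{s-1}}+s,\qquad \frac{\fL_{f}(r_{s,j_s})}{\fA_{f}(r_{s,j_s})}\,<\,\frac{1}{s}\ \ (\text{resp. the Ahlfors analogue}),\qquad d(\oN_s,\oT_s)\,<\,\frac{1}{s}
\]
hold simultaneously. Each can be achieved for all sufficiently large $j$: the first because $r_{s,j}\to\infty$ as $j\to\infty$, the second because $\{f\,\lvert_{\overline{\D}_{r_{s,j}}}\}_{j\geqslant 1}$ satisfies the length-area condition, and the third because the normalized currents from the $s$-th family converge weakly to $\oT_s$. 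Hence a joint choice exists.

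With the diagonal sequence in hand, $\{f\,\lvert_{\overline{\D}_{r_{s,j_s}}}\}_{s\geqslant 1}$ is itself a sequence of concentric holomorphic discs with radii strictly increasing to infinity and satisfying the required length-area condition. By the triangle inequality,
\[
d(\oN_s,\oT)\,\leqslant\, d(\oN_s,\oT_s)+d(\oT_s,\oT)\,\longrightarrow\, 0,
\]
so $\oN_s\to \oT$ weakly (without passing to a further subsequence). Therefore $\oT$ is a Nevanlinna (resp. Ahlfors) current generated by $\{f\,\lvert_{\overline{\D}_r}\}_{r>0}$, as claimed. The argument is entirely elementary once one uses the metrizability of the weak topology on mass-one currents; there is no genuine obstacle beyond the bookkeeping of this diagonal extraction.
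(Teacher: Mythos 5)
Your proposal is correct and follows essentially the same diagonal-extraction strategy as the paper; the only difference is packaging — you invoke metrizability of the weak topology on mass-one positive bidimension $(1,1)$ currents (valid because this set is a weak-$*$ compact bounded subset of the dual of the separable Fr\'echet space of smooth $(1,1)$-forms), whereas the paper instead works directly with a countable dense family $\{\theta_j\}$ of test forms and explicit $\epsilon$-thresholds $M_{j,\epsilon}$, $N_{j,s,\epsilon}$, which is precisely the raw material out of which such a metric is built.
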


\begin{proof}
We only prove the case of Ahlfors currents, since the other case is similar. 
For each $s\geqslant 1$,
let $\oT_s$ be generated by the radii $\{r_{s,\ell}\}_{\ell= 1}^\infty$. By throwing some smaller radii,
we can assume that for all $\ell\geqslant 1$, the  length-area ratios are small
\begin{equation}
    \label{the last l/A condition}
\dfrac{\Length_{\omega}(f(\partial \D_{r_{s,\ell}}))}{\Area _{\omega}(f( \D_{r_{s,\ell}}))}
<
2^{-s}.
\end{equation}
  In the Fr\'echet space
$\mathcal{A}^{1, 1}(X)$ of
smooth $(1, 1)$-forms
on $X$, we can 
take a countable dense subset $\{\theta_j\}_{j= 1}^\infty$.
For each $j\geqslant 1$ and  $\epsilon>0$,
we can choose a large integer $M_{j, \epsilon}$ such that
$|\lp\oT_s,\theta_j\rp-\lp\oT,\theta_j\rp|<\epsilon$ for all $s\geqslant M_{j, \epsilon}$. Moreover, 
by definition of $\oT_s$, 
we can find $N_{j, s, \epsilon}\gg 1$ such that
for all $\ell\geqslant N_{j, s, \epsilon}$, there hold
\[
\Big|
\frac{1}{ \Area_{\omega} (f(\D_{r_{s,\ell}}))} \, \lp f_*  [\D_{r_{s,\ell}}] 
,\theta_j\rp
-
\lp\oT_s,\theta_j\rp
\Big|
< \epsilon.
\]
Hence for all $j\geqslant 1$, $\epsilon>0$,  $s\geqslant M_{j, \epsilon}$, $\ell\geqslant N_{j, s, \epsilon}$, we have
\begin{equation}
    \label{key A-A condition}
\Big|
\frac{1}{ \Area_{\omega} (f(\D_{r_{s,\ell}}))} \, \lp f_*  [\D_{r_{s,\ell}}] 
,\theta_j\rp
-
\lp\oT,\theta_j \rp
\Big|
<2 \epsilon.
\end{equation}
By the diagonal argument of the Arzel\`a-Ascoli Theorem, from $\{r _{s,\ell}\}_{s, \ell\geqslant 1}$
we can extract a sequence of increasing radii
$\{r_k\}_{k= 1}^\infty$ tending to infinity, 
such that
the holomorphic discs $\{f\,\lvert_{\overline\D_{r_k}}\}_{k\geqslant 1}$ have length-area ratios tending to zero as guaranteed by~\eqref{the last l/A condition}, and that they generate $\oT$
as an Ahlfors current 
by~\eqref{key A-A condition}.
\end{proof}

We have the following two observations.

\medskip\noindent{\bf Observation 1.} 
    Since $\mathbb{Q}_+\subset \mathbb{R}_{\geqslant 0}$ is dense, by Proposition~\ref{prop-shape} and Lemma~\ref{A-C trick again}, for all real numbers $a_j, b_k\geqslant 0$ with $\sum_{j=1}^{\infty} a_j+\sum_{k=1}^{\infty} b_k=1$, $\{\mathbf F \,\lvert_{\overline \D_{r}}\}_{r>0}$ can generate 
    Nevanlinna/Ahlfors currents 
    \[
\oT= \sum_{j=1}^{\infty} {a_j\over \varrho} \cdot [\pi_1^{-1}(x_j)]
+\sum_{k=1}^{\infty} b_k \cdot [\pi_2^{-1}(y_k)].
\]

\noindent{\bf Observation 2.} 
    Since both $\{x_j\}_{j\geqslant 1}\subset \C\P^1$ and $\{y_k\}_{k\geqslant 1}\subset E$ are dense, by \textbf{Observation 1}  above and Lemma~\ref{A-C trick again}, 
    for arbitrary points
 $\{x'_j\}_{j=1}^{\infty}\subset \C\P^1$ and $\{y'_k\}_{k=1}^{\infty}\subset E$, for all real numbers $a_j, b_k\geqslant 0$ with $\sum_{j=1}^{\infty} a_j+\sum_{k=1}^{\infty} b_k=1$, 
    $\{\mathbf F \,\lvert_{\overline \D_{r}}\}_{r>0}$ can generate 
    Nevanlinna/Ahlfors currents 
      \begin{equation*}
      \label{arbitrary shape N/A currents}
\oT= \sum_{j=1}^{\infty} {a_j \over \varrho} \cdot [\pi_1^{-1}(x'_j)]
+\sum_{k=1}^{\infty} b_k \cdot [\pi_2^{-1}( y'_k)].
\end{equation*}

 \smallskip

\begin{proof}[Proof of Theorem~~\ref{thm-shape}]
\textbf{Observation 2} above shows that $\mathbf F$ can produce all the shapes of Nevanlinna/Ahlfors currents with trivial diffuse part.
It remains to show examples of the shape $(|J_{\mathsf{E}}|\in \mathbb{Z}_{\geqslant 0} \cup \{\infty\}, |J_{\mathsf{R}}| \in \mathbb{Z}_{\geqslant 0} \cup \{\infty\}, \oT_{\dif} \text{ is nontrivial})$.

For every $s\geqslant 1$, select some ``equidistributed'' points $\{w_{s, \ell}\}_{\ell=1}^s\subset E$ such that the sequence of averaged Dirac measures 
$\mu_s:= \frac{1}{s}\sum_{\ell=1}^s \delta_{w_{s,\ell}}$ converges weakly to $\omega_E/\varrho$  as $s$ tends to infinity.
Choose arbitrary distinct points $\{x'_j\}_{j=1}^{|J_{\mathsf{E}}|}\subset \C\P^1$, $\{y'_k\}_{k=1}^{|J_{\mathsf{R}}|}\subset E$.
Take strictly positive real coefficients 
$\{a_j\}_{j=1}^{|J_{\mathsf{E}}|}, \{b_k\}_{k=1}^{|J_{\mathsf{R}}|}$
with total sum $\sum_{j=1}^{|J_{\mathsf{E}}|} a_j+\sum_{k=1}^{|J_{\mathsf{R}}|} b_k<1$. By \textbf{Observation 2}, 
    $\{\mathbf F \,\lvert_{\overline \D_{r}}\}_{r>0}$ can generate 
    Nevanlinna/Ahlfors currents 
\begin{equation}
    \label{T_s last game}
\oT_s
:=
\Big(\sum_{j=1}^{\min \{s, |J_{\mathsf{E}}|\}} {a_{j}\over \varrho} \cdot [\mathsf \pi_1^{-1}(x_j')] + \sum_{k=1}^{\min \{s, |J_{\mathsf{R}}|\}} b_{k}\cdot [\mathsf \pi_2^{-1}(y_k')]
\Big)
+
\Big(
\frac{c_s}{s}\cdot
\sum_{\ell=1}^{s}  
[\mathsf \pi_2^{-1}(w_{s, \ell})]
\Big),
\end{equation}
where $c_s:=1-\sum_{j=1}^{\min\{s, |J_{\mathsf{E}}|\}}a_{j}-\sum_{k=1}^{\min\{s, |J_{\mathsf{R}}|\}} b_{k} >0$.
As $s$ tends to infinity,
the first bracket $(\cdots)$ of~\eqref{T_s last game} converges weakly to a singular current
\[
\oT_{\alg}
:=
\sum_{j=1}^{|J_{\mathsf{E}}|} { a_{j}\over \varrho }\cdot [\mathsf \pi_1^{-1}(x_j')] + \sum_{k=1}^{|J_{\mathsf{R}}|} b_{k}\cdot [\mathsf \pi_2^{-1}(y_k')],
\]
while the second 
bracket $(\cdots)$ of~\eqref{T_s last game} converges weakly to a nontrivial diffuse current $\oT_{\dif}$. Indeed, it is easy to check that, for any smooth $(1, 1)$-form $\eta$ on $X$, 
$$
\lp \oT_{\dif},\eta \rp
=
\Big(
1-\sum_{j=1}^{ |J_{\mathsf{E}}|}a_{j}-\sum_{k=1}^{ |J_{\mathsf{R}}|} b_{k}
\Big) \cdot{1\over \varrho}\int_{X} \eta\wedge \pi_2^*\omega_E.$$ 
Thus by  Lemma~\ref{A-C trick again}, we obtain 
a Nevanlinna/Ahlfors current $\oT:=\oT_{\alg}+\oT_{\dif}$  with the desired shape.
\end{proof}

\medskip


\begin{thebibliography}{10}
	
	\bibitem{MR0470252}
	Robert Brody.
	\newblock Compact manifolds and hyperbolicity.
	\newblock {\em Trans. Amer. Math. Soc.}, 235:213--219, 1978.
	
	\bibitem{brunella1999}
	Marco Brunella.
	\newblock Courbes enti\`{e}res et feuilletages holomorphes.
	\newblock {\em L'Enseignement Math\'{e}matique}, 45:195--216, 1999.
	
	\bibitem{Conway}
	John~B. Conway.
	\newblock {\em Functions of one complex variable}, volume~11 of {\em Graduate
		Texts in Mathematics}.
	\newblock Springer-Verlag, New York-Berlin, second edition, 1978.
	
	\bibitem{Dinh-Sibony2018}
	Tien-Cuong Dinh and Nessim Sibony.
	\newblock Unique ergodicity for foliations in {$\Bbb P^2$} with an invariant
	curve.
	\newblock {\em Invent. Math.}, 211(1):1--38, 2018.
	
	\bibitem{Duval2006}
	Julien Duval.
	\newblock Singularit\'es des courants d'{A}hlfors.
	\newblock {\em Ann. Sci. \'Ecole Norm. Sup. (4)}, 39(3):527--533, 2006.
	
	\bibitem{Duval2008}
	Julien Duval.
	\newblock Sur le lemme de {B}rody.
	\newblock {\em Invent. Math.}, 173(2):305--314, 2008.
	
	\bibitem{MR3342651}
	Julien Duval.
	\newblock Sur la th\'{e}orie d'{A}hlfors des surfaces.
	\newblock {\em Enseign. Math.}, 60(3-4):417--420, 2014.
	
	\bibitem{duval-pan}
	Julien Duval.
	\newblock Around {B}rody {L}emma.
	\newblock In {\em Hyperbolicity properties of algebraic varieties}, volume~56
	of {\em Panor. Synth\`eses}, pages 1--12. Soc. Math. France, Paris, [2021]
	\copyright 2021.
	
	\bibitem{Duval-Huynh2018}
	Julien Duval and Dinh~Tuan Huynh.
	\newblock A geometric second main theorem.
	\newblock {\em Math. Ann.}, 370(3-4):1799--1804, 2018.
	
	\bibitem{Forstneric-Book}
	Franc Forstneri\v{c}.
	\newblock {\em Stein manifolds and holomorphic mappings}, volume~56 of {\em
		Ergebnisse der Mathematik und ihrer Grenzgebiete.}
	\newblock Springer, Cham, second edition, 2017.
	
	\bibitem{MR0609557}
	Mark Green and Phillip Griffiths.
	\newblock Two applications of algebraic geometry to entire holomorphic
	mappings.
	\newblock In {\em The {C}hern {S}ymposium 1979 ({P}roc. {I}nternat. {S}ympos.,
		{B}erkeley, {C}alif., 1979)}, pages 41--74. Springer, New York-Berlin, 1980.
	
	\bibitem{Griffiths-Harris}
	Phillip Griffiths and Joseph Harris.
	\newblock {\em Principles of algebraic geometry}.
	\newblock Wiley Classics Library. John Wiley \& Sons, Inc., New York, 1994.
	\newblock Reprint of the 1978 original.
	
	\bibitem{MR1826251}
	Misha Gromov.
	\newblock Spaces and questions. {G}eom. {F}unct. {A}nal., {S}pecial {V}olume,
	{P}art {I}.
	\newblock pages 118--161. 2000.
	
	\bibitem{Huynh2016}
	Dinh~Tuan Huynh.
	\newblock {\em Sur le Second Th\'{e}or\`{e}me Principal}.
	\newblock Ph.D. Thesis. Orsay, 2016.
	
	\bibitem{Huynh-Vu2020}
	Dinh~Tuan Huynh and Duc-Viet Vu.
	\newblock On the set of divisors with zero geometric defect.
	\newblock {\em J. Reine Angew. Math.}, 771:193--213, 2021.
	
	\bibitem{huynh-xie-JMPA}
	Dinh~Tuan Huynh and Song-Yan Xie.
	\newblock On {A}hlfors currents.
	\newblock {\em J. Math. Pures Appl. (9)}, 156:307--327, 2021.
	
	\bibitem{Kleiner-preprint}
	Bruce Kleiner.
	\newblock Hyperbolicity using minimal surfaces.
	\newblock Preprint.
	
	\bibitem{MR0828820}
	Serge Lang.
	\newblock Hyperbolic and {D}iophantine analysis.
	\newblock {\em Bull. Amer. Math. Soc. (N.S.)}, 14(2):159--205, 1986.
	
	\bibitem{MR1878556}
	Serge Lang.
	\newblock {\em Algebra}, volume 211 of {\em Graduate Texts in Mathematics}.
	\newblock Springer-Verlag, New York, third edition, 2002.
	
	\bibitem{Mcquillan1998}
	Michael McQuillan.
	\newblock Diophantine approximations and foliations.
	\newblock {\em Inst. Hautes \'{E}tudes Sci. Publ. Math.}, (87):121--174, 1998.
	
	\bibitem{MR1989205}
	Michael McQuillan.
	\newblock Integrating {$\partial\overline\partial$}.
	\newblock In {\em Proceedings of the {I}nternational {C}ongress of
		{M}athematicians, {V}ol. {I} ({B}eijing, 2002)}, pages 547--554. Higher Ed.
	Press, Beijing, 2002.
	
	\bibitem{MR0279280}
	Rolf Nevanlinna.
	\newblock {\em Analytic functions}, volume 162 of {\em Die Grundlehren der
		mathematischen Wissenschaften}.
	\newblock Springer-Verlag, New York-Berlin, 1970.
	
	\bibitem{Sibony-email}
	Nessim Sibony.
	\newblock Private communication, 2021.
	
	\bibitem{Siu1974}
	Yum-Tong Siu.
	\newblock Analyticity of sets associated to {L}elong numbers and the extension
	of closed positive currents.
	\newblock {\em Invent. Math.}, 27:53--156, 1974.
	
	\bibitem{MR1967689}
	Claire Voisin.
	\newblock {\em Hodge theory and complex algebraic geometry. {I}}, volume~76 of
	{\em Cambridge Studies in Advanced Mathematics}.
	\newblock Cambridge University Press, Cambridge, 2002.
	\newblock Translated from the French original by Leila Schneps.
	
	\bibitem{Vojta}
	Paul Vojta.
	\newblock Diophantine approximation and {N}evanlinna theory.
	\newblock In {\em Arithmetic geometry}, volume 2009 of {\em Lecture Notes in
		Math.}, pages 111--224. Springer, Berlin, 2011.
	
	\bibitem{Xie-Yuan}
	Junyi Xie and Xinyi Yuan.
	\newblock The geometric {B}ombieri-{L}ang conjecture for ramified covers of
	abelian varieties.
	\newblock \url{arXiv:2308.08117}, 2023.
	
	\bibitem{Xie-2023}
	Song-Yan Xie.
	\newblock Entire curves producing distinct {N}evanlinna currents.
	\newblock {\em {\tt arXiv:2309.04690}, to appear in Int. Math. Res. Not.},
	2023.
	
\end{thebibliography}
\end{document}